\newtheorem{theorem}{Theorem}[section]
\newtheorem{lemma}[theorem]{Lemma}
\newtheorem{corollary}[theorem]{Corollary}
\theoremstyle{definition}
\newtheorem{remark}[theorem]{Remark}
\newtheorem{definition}[theorem]{Definition}
\theoremstyle{remark}
\theoremstyle{definition}
\numberwithin{equation}{section}
\renewcommand{\AA}{\mathscr{A}}
\newcommand{\BB}{\mathscr{B}}
\newcommand{\HH}{\mathscr{H}}
\newcommand{\Ii}{\mathcal{I}}
\newcommand{\Pp}{\mathcal{P}}
\newcommand{\RR}{\mathcal{R}}
\newcommand{\Yy}{\mathscr{Y}}
\newcommand{\field}[1]{\mathbb{#1}}
\newcommand{\C}{\field{C}}
\newcommand{\R}{\field{R}}
\newcommand{\N}{\field{N}}
\newcommand{\Z}{\field{Z}}
\renewcommand{\Re}{\mathop{\text{\upshape{Re}}}}
\renewcommand{\Im}{\mathop{\text{\upshape{Im}}}}
\newcommand{\supp}{\mathop{\rm{supp}}}
\renewcommand{\div}{\mathop{\rm{div}}}
\newcommand{\ceqq}{\coloneqq}
\newcommand{\al}{\alpha}
\newcommand{\be}{\beta}
\newcommand{\de}{\delta}
\renewcommand{\th}{\theta}
\newcommand{\ka}{\kappa}
\newcommand{\la}{\lambda}
\newcommand{\rh}{\rho}
\newcommand{\sig}{\sigma}
\newcommand{\ph}{\varphi}
\newcommand{\ps}{\psi}
\newcommand{\Ga}{\Gamma}
\newcommand{\De}{\Delta}
\newcommand{\Om}{\Omega}
\newcommand{\na}{\nabla}
\newcommand{\pa}{\partial}
\renewcommand{\it}{\textit}
\renewcommand{\Re}{\mathop{\text{\upshape{Re}}}}
\renewcommand{\Im}{\mathop{\text{\upshape{Im}}}}
\renewcommand{\hat}{\widehat}
\renewcommand{\bar}[1]{\overline{#1}}
\renewcommand{\tilde}[1]{\widetilde{#1}}
\begin{document}
\title[Stability analysis for a plate-membrane system]{Stability analysis for a  plate-membrane system  without geometric condition}
\thanks{The first author thanks the DAAD for funding a research stay at the University of Konstanz.}

\author{Bienvenido Barraza Mart\'inez}
\author{Robert Denk}
\author{Jonathan Gonz\'alez Ospino}
\author{Jairo Hern\'andez Monz\'on}

\address{Departamento de Matem\'aticas y Estad\'istica - Universidad del Norte\vspace{-0.27 cm}}
\address{Km. 5 V\'ia Puerto Colombia, \'Area Metropolitana de Barranquilla, Colombia}
\email{bbarraza@uninorte.edu.co}
\email{gjonathan@uninorte.edu.co}
\email{jahernan@uninorte.edu.co}

\address{Fachbereich f\"ur Mathematik und Statistik - Universit\"at Konstanz\vspace{-0.27 cm}}
\address{Universit\"atsstrasse 10, 78464 Konstanz, Germany}
\email{robert.denk@uni-konstanz.de}

\renewcommand{\shortauthors}{B. Barraza Mart\'inez et al.}

\date{\today}


\subjclass{Primary 35M33, 35B35, secondary 35B40, 47D06,  74K15, 74K20}
\keywords{Polynomial stability without geometric condition, lack of exponential stability, transmission problems, thermoelastic plate-membrane systems}

\begin{abstract}
In this work, we  consider a transmission problem describing a thermoelastic plate surrounding a membrane without any mechanical damping. The main results consist of the lack of exponential stability for this problem and the polynomial stability without the usual geometric condition. 
\end{abstract}

\maketitle

\section{Introduction}\label{intro}

In this work, we will investigate stability properties of a transmission problem where a thermoelastic plate in a region $\Omega_1$ is coupled with an elastic membrane in a region $\Omega_2$. More precisely, we consider the following geometric situation: Let $\Omega$ and $\Omega_2$ be bounded domains in $\mathbb{R}^2$ with enough regular (at least $C^4$) boundaries  $\Gamma\ceqq\partial\Omega$ and $I\ceqq\partial\Omega_2$, respectively, such that $\overline{\Omega_2}\subset\Omega$ and $\Omega_1\ceqq\Omega\smallsetminus\overline{\Omega_2}$. The unit outward normal vector to $\partial\Omega_1$ is denoted by $\nu\ceqq(\nu_1, \nu_2)^\top$ and $\tau\ceqq(-\nu_2, \nu_1)^\top$ is the unit tangent vector along $\partial\Omega_1$. Note that the unit outward normal vector to $I$ is $-\nu$. See Fig. \ref{grafica sin orificio}.

\begin{figure}[ht]
 \begin{center}
 \begin{tikzpicture}

 \pgfdeclareradialshading{thermal}{\pgfpoint{0cm}{0cm}}{
   color(0cm)=(blue!40);
   color(1cm)=(orange!25)
 }

 \shade[shading=thermal,shading angle=0,even odd rule]
   [rotate around={0.1746817133885854:(-3.7885809279481353,-4.01692271010987)}]
   (-3.7885809279481353,-4.01692271010987) ellipse (3.6654861587544563cm and 1.3521352683072208cm)
   [rotate around={2.254574965935038:(-2.5525173750121914,-4.349309547874158)}]
   (-2.5525173750121914,-4.349309547874158) ellipse (1.5013113362130832cm and 0.7148808572944548cm);

 \fill[gray!20]
   [rotate around={2.254574965935038:(-2.5525173750121914,-4.349309547874158)}]
   (-2.5525173750121914,-4.349309547874158) ellipse (1.5013113362130832cm and 0.7148808572944548cm);

 \draw [rotate around={0.1746817133885854:(-3.7885809279481353,-4.01692271010987)},line width=0.5pt]
   (-3.7885809279481353,-4.01692271010987) ellipse (3.6654861587544563cm and 1.3521352683072208cm);
 \draw [rotate around={2.254574965935038:(-2.5525173750121914,-4.349309547874158)},line width=0.5pt]
   (-2.5525173750121914,-4.349309547874158) ellipse (1.5013113362130832cm and 0.7148808572944548cm);

 \draw [->,line width=0.9pt] (-1.969665118853991,-3.6724395690215927) -- (-2.0256094708769915,-3.987446830768918);
 \draw [->,line width=0.9pt] (-1.5456816203920254,-2.9415587547002433) -- (-1.459805030539583,-2.6149953308742018);

 \draw[color=black] (-6.483542343039012,-4) node {$\Omega_1$};
 \draw[color=black] (-3.4,-4.4) node {$\Omega_2$};
 \draw[color=black] (-1.9,-4.15) node {$\nu$};
 \draw[color=black] (-1.5,-2.45) node {$\nu$};
 \draw[color=black] (-3,-3.4) node {$I$};
 \draw[color=black] (-5.9,-2.6) node {$\Gamma$};

 \end{tikzpicture}
 \end{center}
 \caption{The set $\Omega=\Omega_1\cup I\cup\Omega_2$.}\label{grafica sin orificio}
\end{figure}
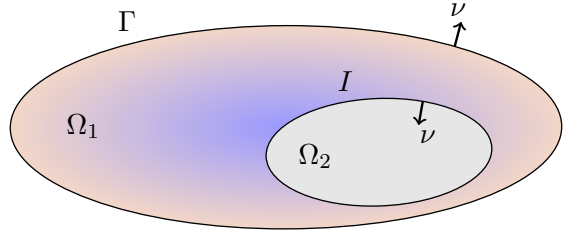

We will assume that $\Omega_1$ and $\Omega_2$ are occupied by the middle surface of the thermoelastic plate and by the membrane in equilibrium, respectively. In this case, $u=u(t, x)$ and $v=v(t, x)$ denote the vertical deflections of the plate and the membrane, respectively. The temperature difference of the plate, with respect to a reference temperature, is denoted by $\theta=\theta(t, x)$. We consider the following system of PDEs
\begin{align}
\rho_{1}u_{tt}+\beta_1\Delta^2 u+\alpha\Delta\theta&=0 \ \text{ in } \ \R^+\times\Omega_1,\label{ec1}\\
\rho_{0}\theta_{t}-\beta\Delta\theta +\sigma\theta -\alpha\Delta u_{t}&=0 \ \text{ in } \ \R^+\times\Omega_1,\label{ec2}\\
\rho_{2} v_{tt}-\beta_2\Delta v- m\Delta v_t &=0 \ \text{ in } \ \R^+\times\Omega_2.\label{ec3}
\end{align}
The constants $\alpha, \beta, \beta_1, \beta_2, \rho_0, \rho_1, \rho_2$ are all positive, while the constants $\sig$ and $m$ are considered non-negative. In \cite{bgh2023mana} there is a physical description of each of the above terms, \textcolor{black}{for example, $m$ represents the presence or absence of a Kelvin--Voigt type damping in the membrane.} In \cite{Barraza2021Long}, the authors considered the system \eqref{ec1}-\eqref{ec3} with and without frictional damping (also called viscous damping, see \cite{Gu2020Energy} and \cite{Zhao2005Stability}) on the membrane, $\sigma=m=0$, and transmission conditions on $I$ different from those introduced in this paper.

We will suppose that the plate is clamped on $\Gamma$ and also coupled to the membrane \textcolor{black}{on $I$}, \textcolor{black}{which} is interpreted by
\begin{equation}\label{condiciones naturales}
u=\partial_{\nu}u=0 \ \text{ on } \ \ \mathbb{R}^+\times\Gamma \hspace{0.5 cm}\text{ and }\hspace{0.5 cm} u=v \ \text{ on } \ \ \mathbb{R}^+\times I,
\end{equation}
respectively. We will assume that the temperature satisfies Newton's law of cooling (with coefficient $\kappa>0$, Robin case) along $\partial\Omega_1$, that is,
\begin{equation}\label{ley de enfriamiento}
\partial_{\nu}\theta+\kappa\theta=0 \ \text{ on } \
 \mathbb{R}^+\times\partial\Omega_1.
\end{equation}
We impose the following transmission conditions on the interface:
\begin{align}
\beta_1\mathscr{B}_1u+\alpha\theta&=0 \ \text{ on } \ \mathbb{R}^{+}\times I,\label{condtrans1}\\
\beta_1\mathscr{B}_2u +\alpha\partial_{\nu}\theta+\beta_2\partial_{\nu}v +m\partial_{\nu}v_t&=0 \ \text{ on } \ \mathbb{R}^{+}\times I,\label{ec13}
\end{align}
where
$$
\mathscr{B}_1u\ceqq \Delta u+(1-\mu)B_1u \ \text{ and } \ \mathscr{B}_2u \ceqq \partial_{\nu}\Delta u+(1-\mu)\partial_{\tau}B_2u,
$$
here $B_1$ and $B_2$ are the boundary operators defined by the relations
\begin{align*}
B_1u&\ceqq2\nu_1\nu_2u_{x_1x_2}-\nu_1^2u_{x_2x_2}-\nu_2^2u_{x_1x_1},\\
B_2u&\ceqq\nu_1\nu_2(u_{x_2x_2}-u_{x_1x_1})+(\nu_1^2-\nu_2^2)u_{x_1x_2}.
\end{align*}
The constant $\mu\in\left(0, \frac{1}{2}\right)$ is  Poisson ratio of the plate. Moreover, we consider the initial conditions
\begin{align}
\left(u, u_t, \theta\right)\big|_{t=0}&=(u_0, u_1, \theta_0) \ \text{ in } \ \Omega_1,\label{initial1}\\
\left(v, v_t\right)\big|_{t=0}&=(v_0, v_1) \ \text{ in } \ \Omega_2.\label{initial2}
\end{align}

Recently, in \cite{bgh2023mana} the authors proved  exponential stability of the semigroup associated with the system \eqref{ec1}-\eqref{initial2}, when $m>0$. But, what happens to the stability of the semigroup if $m=0$? Is it exponentially stable in this case?
Moreover, it was established in \cite{bgh2023mana} that  the semigroup associated with the system \eqref{ec1}-\eqref{initial2} decays polynomially of order at least 1/25, if the plate 
equation contains a rotational inertial term ($-\gamma\Delta u_{tt}$ with $\gamma >0$), \textcolor{black}{a structural damping acts on the  plate, } $m=0$, and if we assume the following geometrical condition: There exists a point $x_0\in\R^2$ such that
\begin{equation} \label{GC}
q(x)\cdot\nu(x)\leq0  \quad\textup{for}\quad x\in I,
\end{equation}
where the vector field $q$ is defined 
by $q(x)\ceqq x-x_0$. Then, it is interesting to ask now if this result holds when $\gamma=0$, $m=0$, and no mechanical damping acts on the thermoelastic plate. Another interesting question is the following: 
Can we obtain polynomial decay of the solution of \eqref{ec1}-\eqref{initial2} without geometric condition
on the boundary? 

It is common in the literature to assume this geometric condition in order to facilitate estimates for the $L^2$-norm of the wave component of the solution, which are necessary to obtain polynomial stability of the associated semigroup. In fact, using Rellich's integral identity, identities and inequalities can be obtained in which some integral terms involving the factor $q\cdot\nu$ can be discarded, allowing the desired estimates to be attained (see, e.g., \cite{Avalos2016Uniform,Barraza2021Long,BARRAZAMARTINEZ2019Regularity,bgh2023mana}). In our case, after using Rellich's identity, all the integral terms in equation \eqref{Eq_ast26b} are estimated and none are discarded. 

Transmission problems of materials composed of two different elastic components are related to the optimal design of material science, e.g. in damping mechanisms for bridges or in automotive industry  in other applications (see \cite{balmes2002tools, Germes2002, MunozGracia2007, Roy2006GermesBalmes} and the references therein). Hence, it is relevant to investigate  the asymptotic behavior of the solution of \eqref{ec1}-\eqref{initial2}, when $m\geq0$.
Specifically, we will prove in Section \ref{exponential instability} the
lack of exponential stability of the semigroup associated with system \eqref{ec1}-\eqref{initial2} if $m=0$, and in Section \ref{Polynomial stability} the polynomial stability of the semigroup associated if  $m=0$ and without taking into account the geometric condition \eqref{GC}.

Some words about notations: If $X$ is a Banach space, then its antidual is denoted by $X'$; for $\mathcal{O}\subset\R^2$ open, $S\subset\partial\mathcal{O}$, $s\in\R$, and $G\in\{\mathcal{O}, S\}$, $H^s(G)$ denotes the standard Sobolev spaces; the symbol $\hookrightarrow$ denotes continuous embedding, whereas $\overset{c}{\hookrightarrow}$ means that the continuous  embedding is compact. 

Throughout this document, the letter $C$ will stand for a generic constant which may vary in each time of appearance. Furthermore, the space of all bounded linear operators on a Banach space $X$ will be denoted by $\mathscr{L}(X)$.
Each space in this article is defined over the field of complex numbers.

\section{ Well-posedness, regularity, and spectral results}\label{Section2}

We first study the existence and uniqueness of problem \eqref{ec1}-\eqref{initial2}.  Let us consider the space $H_{\Gamma}^k(\Omega_1)\coloneqq\big\{w\in H^k(\Omega_1) : \frac{\partial^jw}{\partial\nu^j}=0 \ \text{on} \ \Gamma \ \text{for} \ j=0,\ldots, k-1\big\}$ with $k\in\N$. The space $H^2_{\Gamma}(\Omega_1)$ endowed with the scalar product
$$
\left(u, v\right)_{H^2_{\Gamma}(\Omega_1)}\coloneqq \mu\left(\Delta u, \Delta v\right)_{L^2(\Omega_1)}+(1-\mu)\left(\nabla^2u, \nabla^2v\right)_{L^2(\Omega_1)^4},
$$
where $u, v\in H^2_{\Gamma}(\Omega_1)$ and
$$
\left(\nabla^2u, \nabla^2v\right)_{L^2(\Omega_1)^4}\coloneqq \int_{\Omega_1}u_{x_1x_1}\bar v_{x_1x_1}+u_{x_2x_2}\bar v_{x_2x_2}+2u_{x_1x_2}\bar v_{x_1x_2} \ dx,
$$
is a Hilbert space, see \cite[Section 2]{BARRAZAMARTINEZ2019Regularity}. It is known that the norms $\|\cdot\|_{H^2_{\Ga}(\Om_1)}$ and $\|\cdot\|_{H^2(\Om_1)}$ are equivalent on $H^2_{\Ga}(\Om_1)$. We introduce the \emph{phase space}
$$
\HH\coloneqq \Big\{\ph=(\ph_1, \ph_2, \ph_3, \ph_4, \ph_5)^\top\in \mathscr X\,:\, \ph_1=\ph_3 \ \textup{on} \ I\Big\},
$$
where $\mathscr X\ceqq H^2_\Ga(\Om_1)\times L^2(\Om_1)\times H^1(\Om_2)\times L^2(\Om_2)\times L^2(\Om_1)$.  $\HH$ endowed with the inner product
\begin{align*}
\left(\ph, \ps\right)_{\HH}\coloneqq &\beta_1\left(\ph_1, \ps_1\right)_{H^2_{\Gamma}(\Omega_1)}+\rho_1\left(\ph_2, \ps_2\right)_{L^2(\Omega_1)}+\beta_2\left(\nabla\ph_3, \nabla\ps_3\right)_{L^2(\Omega_2)^2}\\
&+\rho_2\left(\ph_4, \ps_4\right)_{L^2(\Omega_2)}+\rho_0\left(\ph_5, \ps_5\right)_{L^2(\Omega_1)}
\end{align*}
for $\ph, \ps\in\HH$, is a Hilbert space. Note that the norm in $\HH$ induced by $(\cdot,\cdot)_{\HH}$ is equivalent to the usual norm in the product space $\mathscr{X}$.

If $w=(u, u_t, v, v_t, \theta)^{\top}$ is in an appropriate space, we can write \eqref{ec1}-\eqref{ec3} together with the initial conditions \eqref{initial1} and \eqref{initial2} as the following Cauchy problem
\begin{equation}\label{abstract with eta=0}
\partial_tw(t)=\AA w(t), \;t>0, \;\textup{ and }\; w(0)=w_0,
\end{equation}
where $w_0\ceqq(u_0, u_1, v_0, v_1, \theta_0)^{\top}$. The operator $\AA$ is defined by
{
\begin{equation}\label{Def_Aw}
\AA w\ceqq\begin{pmatrix}
w_2\\
-\frac{\beta_1}{\rh_1}\Delta^2 w_1 - \frac{\alpha}{\rh_1} \Delta w_5\\
w_4\\
\frac{\be_2}{\rh_2}\De w_3 \textcolor{black}{+ \frac{m}{\rho_2}\Delta w_4}\\
\frac{\al}{\rh_0} \Delta w_2 +\frac{\beta}{\rh_0} \Delta w_5 - \frac{\sigma}{\rh_0} w_5
\end{pmatrix},
\end{equation}}
with $w\coloneqq (w_1, w_2, w_3, w_4, w_5)^\top$. The domain of $\AA$ is defined such that $\AA w \in\HH$  for all $w\in D(\AA)$ \textcolor{black}{and that conditions \eqref{ley de enfriamiento}-\eqref{ec13} with $(u,v, v_t,\theta)=(w_1,w_3,w_4,w_5)$ are weakly satisfied,} that is,
\begin{align*}
D(\AA)\ceqq \big\{& w\in[H^2_\Ga(\Om_1)]^2\times[H^1(\Om_2)]^2\times L^2(\Om_1) : \Delta^{2}w_1, \De w_5\in L^2(\Om_1),\\
& \textcolor{black}{\beta_2\De w_3+m\Delta w_4\in L^2(\Om_2)},
\ w_j=w_{j+2} \ \textup{on} \ I \ \textup{for} \ j=1, 2 \\
& \text{ and } \text{\eqref{ley de enfriamiento}-\eqref{ec13}} 
  \text{ are weakly satisfied}\big\}.
\end{align*}
\noindent By straightforward calculation, we obtain the following equality.

\begin{lemma}\label{A is dissipative}
Let $m\geq0$. For $w=(w_1, w_2, w_3, w_4, w_5)^\top\in D(\AA)$, we have
\begin{align}\label{dissipativity}
\begin{split}
\Re\left(\AA w, w\right)_{\HH}=&-m\left\|\na w_4\right\|^2_{L^2(\Om_2)^2} -\sig\left\|w_5\right\|^2_{L^2(\Om_1)} -\be\left\|\na w_5\right\|^2_{L^2(\Om_1)^2}\\
&-\be\ka\left\|w_5\right\|^2_{L^2(\partial\Om_1)}.
\end{split}
\end{align}
Therefore, $\mathscr{A}$ is dissipative.
\end{lemma}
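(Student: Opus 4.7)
The plan is to expand $(\AA w, w)_{\HH}$ term by term using the definition of the inner product, apply Green's identities to move derivatives from $\AA w$ onto $w$, and then exploit the boundary conditions encoded in $D(\AA)$ to cancel all interface contributions.

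First, I would write
\begin{align*}
(\AA w, w)_{\HH} = & \ \beta_1(w_2, w_1)_{H^2_\Gamma(\Om_1)} - \beta_1(\Delta^2 w_1, w_2)_{L^2(\Om_1)} - \alpha(\Delta w_5, w_2)_{L^2(\Om_1)} \\
& + \beta_2(\nabla w_4,\nabla w_3)_{L^2(\Om_2)^2} + \beta_2(\Delta w_3, w_4)_{L^2(\Om_2)} + m(\Delta w_4, w_4)_{L^2(\Om_2)} \\
& + \alpha(\Delta w_2, w_5)_{L^2(\Om_1)} + \beta(\Delta w_5, w_5)_{L^2(\Om_1)} - \sigma\|w_5\|_{L^2(\Om_1)}^2.
\end{align*}
Now I would apply the Green identity for the bilaplacian adapted to the Kirchhoff form of the inner product: for sufficiently regular $u, v$ with $v, \partial_\nu v = 0$ on $\Gamma$,
$$ (\Delta^2 u, v)_{L^2(\Om_1)} = (u,v)_{H^2_\Gamma(\Om_1)} + \int_I \bigl[\mathscr{B}_2 u\,\bar v - \mathscr{B}_1 u\,\partial_\nu \bar v\bigr]\,dS.$$
The bulk pairing then combines with $\beta_1(w_2,w_1)_{H^2_\Gamma(\Om_1)}$ to give a purely imaginary contribution (a difference of a sesquilinear form and its conjugate). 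Similarly, double integration by parts in the coupling pair $-\alpha(\Delta w_5, w_2) + \alpha(\Delta w_2, w_5)$ produces an imaginary bulk piece plus the boundary terms $-\alpha\int_I \partial_\nu w_5\,\bar w_4\,dS + \alpha\int_I \partial_\nu w_2\,\bar w_5\,dS$ on $I$, since $w_2=w_4$ on $I$ and $w_2=\partial_\nu w_2 = 0$ on $\Gamma$. Likewise the membrane pair $\beta_2(\nabla w_4,\nabla w_3) + \beta_2(\Delta w_3, w_4)$ yields an imaginary piece minus $\beta_2\int_I \partial_\nu w_3\,\bar w_4\,dS$ (the outward normal to $\Om_2$ on $I$ is $-\nu$).

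Taking real parts kills the imaginary bulk pieces. For the remaining real bulk terms: $m(\Delta w_4, w_4) = -m\|\nabla w_4\|^2 - m\int_I \partial_\nu w_4\,\bar w_4\,dS$, and $\beta(\Delta w_5, w_5) = -\beta\|\nabla w_5\|^2 + \beta\int_{\partial\Om_1}\partial_\nu w_5\,\bar w_5\,dS = -\beta\|\nabla w_5\|^2 - \beta\kappa\|w_5\|_{L^2(\partial\Om_1)}^2$ after inserting the Newton law $\partial_\nu w_5 = -\kappa w_5$ on $\partial\Om_1$. Together with $-\sigma\|w_5\|^2$, these already reproduce the right-hand side of \eqref{dissipativity}; it remains only to show that all interface terms on $I$ cancel.

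The heart of the computation is this cancellation. Collecting the boundary integrals on $I$, I would substitute $\beta_1\mathscr{B}_1 w_1 = -\alpha w_5$ and $\beta_1\mathscr{B}_2 w_1 = -\alpha\partial_\nu w_5 - \beta_2\partial_\nu w_3 - m\partial_\nu w_4$ from the transmission conditions \eqref{condtrans1}--\eqref{ec13} into the plate contribution $-\beta_1\int_I[\mathscr{B}_2 w_1\,\bar w_2 - \mathscr{B}_1 w_1\,\partial_\nu \bar w_2]\,dS$ and use $\bar w_2 = \bar w_4$ on $I$. This produces exactly the terms $+\alpha\int_I\partial_\nu w_5\,\bar w_4 + \beta_2\int_I\partial_\nu w_3\,\bar w_4 + m\int_I\partial_\nu w_4\,\bar w_4 - \alpha\int_I w_5\,\partial_\nu \bar w_2$, whose real parts cancel the three terms coming from the membrane, Kelvin--Voigt, and heat-coupling computations; the leftover $-\alpha\Re\int_I w_5\,\partial_\nu\bar w_2\,dS$ matches $+\alpha\Re\int_I\partial_\nu w_2\,\bar w_5\,dS$ because $\alpha$ is real and complex conjugation swaps them. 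The main delicate point, and the only real obstacle, is the careful bookkeeping of signs (in particular remembering that the outward normal to $\Om_2$ on $I$ is $-\nu$ while the Green identity for the bilaplacian used is written with respect to the outward normal $\nu$ of $\Om_1$). Once this ledger is kept straight, the identity \eqref{dissipativity} drops out, and the dissipativity of $\AA$ is immediate since $m,\sigma,\beta,\kappa \ge 0$.
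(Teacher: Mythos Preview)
Your proposal is correct and is precisely the ``straightforward calculation'' the paper alludes to but does not spell out: expand $(\AA w,w)_{\HH}$, integrate by parts using the Kirchhoff--Green identity for the bilaplacian and the standard Green identity for the Laplacian, and use the transmission conditions together with $w_2=w_4$ on $I$ and the Robin condition for $w_5$ to cancel all interface terms. There is no alternative approach to compare---the paper simply omits this routine computation.
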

An application of the Lumer--Phillips theorem yields:

\begin{theorem}\label{C_0 semigroup}
For $ m \geq0$, the operator $\AA$ generates a $C_0$-semigroup $\left(\mathscr T(t)\right)_{t\geq0}$ of contractions on $\HH$.  Therefore, for any $w_0\in\HH$, the problem \eqref{abstract with eta=0} has a unique mild solution $w\in C([0, \infty), \HH)$. Furthermore, for any $w_0\in D(\AA^k)$ with $k\in\N$, there exists a unique classical solution $w$ to the problem \eqref{abstract with eta=0} that belongs to $\bigcap_{j=0}^kC^{k-j}([0, \infty), D(\AA^j))$.\\
\end{theorem}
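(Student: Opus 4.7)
The plan is to apply the Lumer--Phillips theorem. Dissipativity of $\AA$ is already supplied by Lemma \ref{A is dissipative}, since the right-hand side of \eqref{dissipativity} is nonpositive under our standing assumptions $m, \sig \geq 0$ and $\be, \ka > 0$. The main remaining task is therefore the range condition: for some $\la > 0$ the operator $\la I - \AA \colon D(\AA) \to \HH$ must be surjective.

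Given $F = (f_1, \ldots, f_5)^\top \in \HH$, I would solve $(\la I - \AA) w = F$ by first using the first and third component equations, $\la w_1 - w_2 = f_1$ and $\la w_3 - w_4 = f_3$, to eliminate $w_2$ and $w_4$. The three remaining equations become a coupled stationary elliptic system for $(w_1, w_3, w_5)$ on the closed subspace
$$V \ceqq \big\{(\ph_1, \ph_3, \ph_5) \in H^2_\Ga(\Om_1) \times H^1(\Om_2) \times H^1(\Om_1) : \ph_1 = \ph_3 \text{ on } I\big\}$$
of the natural product Hilbert space. Multiplying by test functions from $V$ and integrating by parts --- using Green's identity for the bi-Laplacian together with the boundary operators $\mathscr{B}_1, \mathscr{B}_2$ from \eqref{condtrans1}--\eqref{ec13} and Newton's cooling condition \eqref{ley de enfriamiento} --- the transmission and Robin terms combine to produce a sesquilinear form $a_\la \colon V \times V \to \C$ and an antilinear functional $\ell_F$ on $V$, with the key feature that \eqref{condtrans1}--\eqref{ec13} appear as natural, not essential, boundary conditions.

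Continuity of $a_\la$ follows from standard Sobolev trace estimates. For coercivity, I would exploit the equivalence of $\|\cdot\|_{H^2_\Ga(\Om_1)}$ with $\|\cdot\|_{H^2(\Om_1)}$ recorded after the definition of $\HH$, a Poincaré-type estimate on $H^2_\Ga(\Om_1)$ (from the clamped condition on $\Ga$), and a corresponding bound for $w_5$ coming from the Robin term with $\ka > 0$, together with the $\|\na w_3\|^2_{L^2(\Om_2)^2}$ contribution and the zeroth-order terms generated by $\la$; for $\la$ sufficiently large the form is coercive on $V$. Lax--Milgram then yields a unique $(w_1, w_3, w_5) \in V$. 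Interior elliptic regularity upgrades this weak solution so that $\De^2 w_1, \De w_5 \in L^2(\Om_1)$ and $\be_2 \De w_3 + m \De w_4 \in L^2(\Om_2)$, and the transmission/boundary conditions are recovered in the weak sense built into $D(\AA)$. Setting $w_2, w_4$ from the first and third equations places $w$ in $D(\AA)$ with $(\la I - \AA) w = F$.

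With both hypotheses of Lumer--Phillips verified, $\AA$ generates a contraction $C_0$-semigroup $(\mathscr{T}(t))_{t \geq 0}$ on $\HH$; existence and uniqueness of the mild solution for $w_0 \in \HH$ and of the classical solution with the stated regularity for $w_0 \in D(\AA^k)$ then follow from standard semigroup theory. The main subtlety I expect is bookkeeping in the weak formulation: one must organize Green's identity so that the natural boundary condition \eqref{ec13}, which mixes normal derivatives of $w_1$, $w_3$, $w_4$ and $w_5$ across $I$, appears correctly in $a_\la$; this is the only step that is not essentially routine.
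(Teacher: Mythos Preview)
Your proposal is correct and follows exactly the approach the paper indicates: the paper simply writes ``An application of the Lumer--Phillips theorem yields'' and states the theorem, relying on Lemma~\ref{A is dissipative} for dissipativity and leaving the range condition implicit (it is carried out in the cited earlier work \cite{bgh2023mana}). Your Lax--Milgram sketch for the surjectivity of $\la I-\AA$ is the standard way to fill in this omitted step and raises no concerns.
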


By the above result, the resolvent set $\rho(\AA)$ contains the open half space $\C_+:= \{ \lambda\in\C: \Re\lambda >0\}$. 
It was shown in \cite[Proposition~4.4]{bgh2023mana} that for
$m>0$, we also have $i\R\subset\rho(\AA)$. Later, we will see that 
this  holds in the case $m=0$, too (see Lemma~\ref{Prop_resolvent}). 
In the calculations below, we will need the following fact, which also holds
for $m=0$. 

\begin{lemma}[{\cite[Proposition~4.1]{bgh2023mana}}]\label{Prop_IR}
If $m\geq0$, then $0\in\rh(\AA)$.
\end{lemma}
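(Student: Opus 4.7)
The plan is to prove bijectivity of $\AA : D(\AA)\to\HH$ by a Lax--Milgram argument applied to the elliptic system obtained from $\AA w = F$, working uniformly for $m\geq 0$. Given $F = (f_1,f_2,f_3,f_4,f_5)^\top \in \HH$, the first and third rows of $\AA w = F$ force $w_2 = f_1 \in H^2_\Ga(\Om_1)$ and $w_4 = f_3 \in H^1(\Om_2)$, reducing the task to finding $(w_1,w_3,w_5)$ solving the remaining three PDEs together with the clamping condition \eqref{condiciones naturales}, the Robin condition \eqref{ley de enfriamiento}, and the transmission conditions \eqref{condtrans1}--\eqref{ec13}. I would work on the closed subspace
\begin{equation*}
V \ceqq \bigl\{(\vp_1,\vp_3,\vp_5)\in H^2_\Ga(\Om_1)\times H^1(\Om_2)\times H^1(\Om_1) : \vp_1=\vp_3 \text{ on } I\bigr\}
\end{equation*}
of the product Hilbert space.

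Testing the three reduced equations against $\bar\vp_1,\bar\vp_3,\bar\vp_5$ and integrating by parts---twice for the biharmonic term, using Green's identity expressed through $\mathscr{B}_1,\mathscr{B}_2$; once for the $\De w_3$, $\De w_5$ and $\De w_4$ terms---produces a sesquilinear form $a : V\times V\to\C$ and an antilinear functional $L:V\to\C$. The constraint $\vp_1=\vp_3$ on $I$ together with \eqref{condtrans1}--\eqref{ec13} causes all interface contributions (involving $\mathscr{B}_1w_1,\mathscr{B}_2w_1,\pa_\nu w_3,\pa_\nu w_5$ and $\pa_\nu w_4=\pa_\nu f_3$) to combine compactly, while \eqref{ley de enfriamiento} converts $\pa_\nu w_5$ along $\pa\Om_1$ into $-\ka w_5$. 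Evaluating at $\vp=w$, the off-diagonal coupling terms between $w_1$ and $w_5$ appear skew-Hermitianly and cancel, leaving
\begin{equation*}
\Re a(w,w) = \be_1\|w_1\|^2_{H^2_\Ga(\Om_1)} + \be_2\|\na w_3\|^2_{L^2(\Om_2)^2} + \be\|\na w_5\|^2_{L^2(\Om_1)^2} + \sig\|w_5\|^2_{L^2(\Om_1)} + \be\ka\|w_5\|^2_{L^2(\pa\Om_1)}.
\end{equation*}
Coercivity on $V$ then follows from the norm equivalence $\|\cdot\|_{H^2_\Ga(\Om_1)}\sim\|\cdot\|_{H^2(\Om_1)}$ noted after the definition of $\HH$, a Poincaré-type inequality on $\Om_2$ using that $w_3|_I=w_1|_I$ is controlled by $\|w_1\|_{H^2_\Ga(\Om_1)}$ via the trace theorem, and the standard fact that $\be\|\na\cdot\|^2_{L^2(\Om_1)^2}+\be\ka\|\cdot\|^2_{L^2(\pa\Om_1)}$ dominates $\|\cdot\|^2_{H^1(\Om_1)}$. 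Continuity of $a$ and $L$ is routine via Cauchy--Schwarz and trace theory, so Lax--Milgram yields a unique weak solution $(w_1,w_3,w_5)\in V$.

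To place $w=(w_1,f_1,w_3,f_3,w_5)^\top$ in $D(\AA)$, one tests against $C_c^\infty$ functions to identify $\De^2w_1,\De w_5\in L^2(\Om_1)$ and $\be_2\De w_3+m\De f_3\in L^2(\Om_2)$ distributionally, and then varies the traces of the test functions to recover \eqref{ley de enfriamiento}--\eqref{ec13} in the weak sense prescribed in the definition of $D(\AA)$. Injectivity (hence $0\in\rh(\AA)$ together with surjectivity) is immediate: $\AA w=0$ implies $a(w,w)=0$, whence $w_1=0$ and $w_5=0$; then $w_3|_I=w_1|_I=0$ combined with $\na w_3=0$ forces $w_3=0$, and the first and third rows yield $w_2=w_4=0$. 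The main obstacle is the bookkeeping of boundary terms on $I$: one must track the $\mathscr{B}_1w_1,\mathscr{B}_2w_1$ contributions produced by integrating $\De^2w_1$ by parts and verify that, combined with the membrane and heat contributions under the constraint $\vp_1=\vp_3$, substitution of \eqref{condtrans1}--\eqref{ec13} produces a net interface term that is symmetric and absorbable---this is precisely the step where the Kelvin--Voigt contribution $m\pa_\nu f_3$ pairs with its companion from the membrane equation, explaining why the argument is insensitive to whether $m=0$ or $m>0$.
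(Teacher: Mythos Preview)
The paper does not actually prove this lemma; it simply cites \cite[Proposition~4.1]{bgh2023mana}. So there is no in-paper argument to compare against, and your variational strategy is a natural and in principle correct approach to such a result.

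There is, however, a genuine gap in your coercivity step. You claim that ``the off-diagonal coupling terms between $w_1$ and $w_5$ appear skew-Hermitianly and cancel''. This is false for the reduced system. After substituting $w_2=f_1$, the heat equation (row~5 of \eqref{Def_Aw}) becomes
\[
-\beta\Delta w_5+\sigma w_5=\alpha\Delta f_1-\rho_0 f_5,
\]
so the coupling $\alpha\Delta w_2$ has become \emph{data}, not a cross term involving the unknown $w_1$. By contrast, the term $\alpha\Delta w_5$ in row~2 remains a genuine coupling of $w_5$ into the $w_1$-equation. The skew structure responsible for the dissipativity \eqref{dissipativity} lives between the $w_2$- and $w_5$-components and is destroyed once $w_2$ is eliminated. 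Concretely, if you track the bilinear form---using the biharmonic Green identity, condition~\eqref{condtrans1} to convert $-\beta_1(\mathscr{B}_1 w_1,\partial_\nu\varphi_1)_{L^2(I)}$ into $\alpha(w_5,\partial_\nu\varphi_1)_{L^2(I)}$, and condition~\eqref{ec13} to absorb the other interface contribution---then at $\varphi=w$ you are left with
\[
-\alpha(\nabla w_5,\nabla w_1)_{L^2(\Omega_1)^2}+\alpha(w_5,\partial_\nu w_1)_{L^2(I)}=\alpha(w_5,\Delta w_1)_{L^2(\Omega_1)},
\]
which has no reason to have vanishing real part. Your displayed formula for $\Re a(w,w)$ is therefore missing this term, and direct coercivity fails without an unstated smallness hypothesis on $\alpha$.

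The clean fix is to notice that the heat equation, once $w_2=f_1$ is inserted, \emph{decouples} from $(w_1,w_3)$: first solve $-\beta\Delta w_5+\sigma w_5=\alpha\Delta f_1-\rho_0 f_5$ with the Robin condition~\eqref{ley de enfriamiento} by Lax--Milgram on $H^1(\Omega_1)$ (coercive since $\kappa>0$); elliptic regularity then gives $w_5\in H^2(\Omega_1)$. Now treat $w_5$ as known data in the plate--membrane pair (rows~2 and~4 together with \eqref{condtrans1}--\eqref{ec13}), whose bilinear form $\beta_1(w_1,\varphi_1)_{H^2_\Gamma(\Omega_1)}+\beta_2(\nabla w_3,\nabla\varphi_3)_{L^2(\Omega_2)^2}$ on $\{(\varphi_1,\varphi_3):\varphi_1|_I=\varphi_3|_I\}$ is coercive exactly by the Poincar\'e-type argument you outlined. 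Your injectivity argument is unaffected: when $F=0$ the troublesome cross term disappears because $w_5=0$ is obtained first.
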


As the operator $\AA$ is defined in a weak sense, it is
not obvious that the functions taken in $D(\AA)$ gain enough regularity in such a way that the transmission conditions are satisfied in the strong sense of the trace. However, we have the following result from \cite[Remark~3.3]{bgh2023mana}.

\begin{theorem}\label{Th regularity}
Let $m\geq0$. If $w\in D(\AA)$, then $w_1\in H^4(\Om_1)$, $w_2\in H^2(\Om_1)$, $\beta_2 w_3 + m w_4\in H^2(\Om_2)$, $w_4\in H^1(\Om_2)$ and $w_5\in H^2(\Om_1)$. Therefore, if $w_0\in D(\AA)$ then $w(t)\coloneqq\mathscr T(t)w_0$ $(t\geq0)$ is the unique solution of the problem \eqref{ec1}-\eqref{initial2} and satisfies the boundary and transmission conditions in the strong sense of traces.
\end{theorem}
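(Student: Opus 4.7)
The plan is to bootstrap regularity by applying, in sequence, elliptic regularity theory to the thermal, membrane, and plate components of $w$, exploiting the fact that the weak conditions built into $D(\AA)$ provide exactly the boundary/transmission data needed at each stage. Observe first that the parts $w_2\in H^2(\Om_1)$ and $w_4\in H^1(\Om_2)$ of the conclusion are built into the domain itself, so only the upgrades for $w_5$, $\beta_2 w_3+m w_4$, and $w_1$ have to be proved.

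\textbf{Step 1 (Regularity of $w_5$).} By definition of $D(\AA)$, $\De w_5\in L^2(\Om_1)$ and $w_5$ satisfies the Robin condition \eqref{ley de enfriamiento} weakly on the $C^4$ boundary $\pa\Om_1=\Ga\cup I$. Classical $H^2$-regularity for the Laplacian with Robin data then yields $w_5\in H^2(\Om_1)$, and in particular $w_5|_I\in H^{3/2}(I)$ and $\pa_\nu w_5|_I\in H^{1/2}(I)$.

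\textbf{Step 2 (Regularity of $z\ceqq\be_2 w_3+m w_4$).} From $D(\AA)$ one has $\De z\in L^2(\Om_2)$, and the matching $w_j=w_{j+2}$ on $I$ ($j=1,2$) combined with $w_1,w_2\in H^2_\Ga(\Om_1)$ gives $z|_I=\be_2 w_1|_I+m w_2|_I\in H^{3/2}(I)$. Elliptic $H^2$-regularity for the Dirichlet Laplacian on $\Om_2$ then produces $z\in H^2(\Om_2)$, hence $\pa_\nu z=\be_2\pa_\nu w_3+m\pa_\nu w_4\in H^{1/2}(I)$.

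\textbf{Step 3 (Regularity of $w_1$).} Here $\De^2 w_1\in L^2(\Om_1)$, the clamped conditions $w_1=\pa_\nu w_1=0$ hold on $\Ga$, and on $I$ the weak transmission conditions \eqref{condtrans1}--\eqref{ec13} now read, using Steps 1--2,
\begin{align*}
\be_1\mathscr{B}_1 w_1 &= -\al\, w_5|_I\in H^{3/2}(I),\\
\be_1\mathscr{B}_2 w_1 &= -\al\,\pa_\nu w_5-\pa_\nu z\in H^{1/2}(I).
\end{align*}
The pair $\{\id,\pa_\nu\}$ on $\Ga$ together with the natural Kirchhoff operators $\{\mathscr{B}_1,\mathscr{B}_2\}$ on $I$ forms the standard mixed clamped/free elliptic boundary problem for $\De^2$, which satisfies the Lopatinskii--Shapiro condition. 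Fourth-order elliptic regularity on the $C^4$ domain $\Om_1$ therefore gives $w_1\in H^4(\Om_1)$. Once all components have the claimed regularity, all trace quantities in \eqref{ley de enfriamiento}--\eqref{ec13} lie in classical Sobolev spaces on the respective boundaries, so the weak identities can be promoted to equalities in the strong sense of traces, whence $w(t)\ceqq\mathscr T(t)w_0$ is a classical solution by Theorem~\ref{C_0 semigroup}.

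The main obstacle is the elliptic regularity step for $w_1$: the natural operator $\mathscr{B}_2$ contains the tangential derivative $\pa_\tau B_2 w_1$ along the curved interface $I$, so verifying the Lopatinskii--Shapiro complementing condition and the corresponding up-to-the-boundary a priori estimate is the genuine analytical input and the reason the $C^4$ assumption on $\Ga$ and $I$ is needed. Once this estimate is in hand, as for instance in \cite[Remark~3.3]{bgh2023mana}, the bootstrap described above is routine.
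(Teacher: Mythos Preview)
The paper does not give its own proof of this theorem; it simply quotes the result from \cite[Remark~3.3]{bgh2023mana}. Your bootstrap argument---first $w_5\in H^2(\Om_1)$ via Robin regularity for the Laplacian, then $\be_2 w_3+m w_4\in H^2(\Om_2)$ via Dirichlet regularity, and finally $w_1\in H^4(\Om_1)$ via the mixed clamped/free biharmonic problem---is correct and is exactly the standard route one expects in that reference. You also correctly isolate the one nontrivial analytical ingredient, namely the Lopatinskii--Shapiro condition for $\De^2$ with the Kirchhoff pair $(\BB_1,\BB_2)$ on $I$, and you cite the same source for it; in fact the paper later reproves a parameter-dependent version of precisely this fact in Lemma~\ref{lemma1} (see \cite[Lemma~4.1]{BARRAZAMARTINEZ2019Regularity}).
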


\begin{corollary}
\label{remarkregularity}
For $m=0$, we have 
\[
D(\mathscr{A})\hookrightarrow H^{4}\left(  \Omega_{1}\right)  \times
H^{2}\left(  \Omega_{1}\right)  \times H^{2}\left(  \Omega_{2}\right)  \times
H^{1}\left(  \Omega_{2}\right)  \times H^{2}\left(  \Omega_{1}\right).
\]
\end{corollary}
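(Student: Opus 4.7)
The corollary is essentially an immediate consequence of Theorem \ref{Th regularity} specialized to the case $m=0$, together with a routine application of the closed graph theorem for the continuity of the embedding.

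My plan is as follows. First, I take an arbitrary $w\in D(\mathscr{A})$ and apply Theorem \ref{Th regularity} with $m=0$. This directly yields $w_1\in H^4(\Omega_1)$, $w_2\in H^2(\Omega_1)$, $w_4\in H^1(\Omega_2)$, and $w_5\in H^2(\Omega_1)$. The only nontrivial point concerns the third component: Theorem \ref{Th regularity} provides $\beta_2 w_3 + m w_4\in H^2(\Omega_2)$, which when $m=0$ reduces to $\beta_2 w_3\in H^2(\Omega_2)$. Since $\beta_2>0$ is a fixed positive constant, dividing by $\beta_2$ yields $w_3\in H^2(\Omega_2)$. This establishes the set-theoretic inclusion
\[
D(\mathscr{A})\subset H^{4}(\Omega_{1})\times H^{2}(\Omega_{1})\times H^{2}(\Omega_{2})\times H^{1}(\Omega_{2})\times H^{2}(\Omega_{1}).
\]

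Next, I would upgrade this inclusion to a continuous embedding with respect to the graph norm on $D(\mathscr{A})$. By Theorem \ref{C_0 semigroup}, $\mathscr{A}$ generates a $C_0$-semigroup of contractions on $\mathscr{H}$, hence $\mathscr{A}$ is closed and $(D(\mathscr{A}),\|\cdot\|_{D(\mathscr{A})})$ is a Banach space under the graph norm. The target product Sobolev space is likewise a Banach space. Both spaces embed continuously into $\mathscr{X}$ (the product space defining $\mathscr{H}$), so convergence in either space implies coordinatewise $L^2$-convergence in the appropriate open sets. A standard application of the closed graph theorem therefore yields the continuity of the inclusion map.

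There is no genuine obstacle here; the one point one has to be careful about is that Theorem \ref{Th regularity} is stated qualitatively (regularity only, no explicit norm estimates), which is precisely why the closed graph argument is needed to obtain the embedding constant. I would keep the proof to a few lines, phrased as: applying Theorem \ref{Th regularity} with $m=0$ and using $\beta_2>0$ gives the inclusion at the set level, and the closed graph theorem (applicable since $\mathscr{A}$ is closed) promotes it to a continuous embedding.
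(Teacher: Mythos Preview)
Your proposal is correct and matches the paper's approach: the paper also derives the set-theoretic inclusion from Theorem~\ref{Th regularity} and then invokes an abstract argument (cited as \cite[Lemma~3.5]{Barraza2021Long}) for the continuity of the embedding, which is precisely the closed graph reasoning you spell out. Your explicit remark that $\beta_2 w_3+m w_4\in H^2(\Omega_2)$ reduces to $w_3\in H^2(\Omega_2)$ when $m=0$ makes the one nontrivial point transparent.
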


\begin{proof}
The fact that $D(\AA)$ is a subset of the product space on the right-hand side follows immediately from Theorem~\ref{Th regularity}. The embedding is continuous by the abstract argument from \cite[Lemma~3.5]{Barraza2021Long}.
\end{proof}

\begin{remark}
It was established in \cite{bgh2023mana} that the solutions of problem \eqref{ec1}-\eqref{initial2} have exponential stability when $m>0$, i.e., when the Kelvin--Voigt damping is active on the membrane. In this case, the semigroup  $\left(\mathscr T(t)\right)_{t\geq0}$ is exponentially stable in $\HH$, see \cite[Corollary 5.3]{bgh2023mana} and therefore, the solutions of problem \eqref{ec1}-\eqref{initial2} decays exponentially. The aim of the 
present paper is to show that without Kelvin--Voigt damping, we have no longer exponential stability, but still polynomial stability.
\end{remark}

\section{An a priori estimate}

Here, we will use the theory of parameter-elliptic boundary value problems to obtain an estimate relating the time derivative of the vertical displacement of the plate to the normal derivative of the vertical displacement of the membrane, which will be used in the next section. With respect to the  theory of parameter-ellipticity, we refer to \cite[Section 3]{DPRS23} and \cite[Section 2.5]{ThesisRau2023}.
To obtain the desired estimate, we  first consider  so-called Douglis--Nirenberg systems. We will restrict ourselves to linear differential operators with constant coefficients because this is sufficient for our purposes.

To begin with, let $N\in \N$, $\mathbf{s}:=(s_1,\dots,s_N), \mathbf{t}:=(t_1,\dots,t_N)\in \Z^N$ such that
$s_i\leq0$, $t_i\geq0$ for all $i\in\{1,\dots,N\}$, $s_1+t_1=\cdots=s_N+t_N=:\mu\in\N$ with $\mu N$ even. Let $A(D):=(A_{ij}(D))_{1\leq i, j\leq N}$ be an $N\times N$-matrix, where the entries $A_{ij}(D)$ are linear differential operators with constant coefficients and order $\mathrm{ord}(A_{ij}(D))\leq s_i+t_j$ if $s_i+t_j\geq0$ and $A_{ij}(D)=0$, whenever $s_i+t_j<0$. In this case, we say that the operator matrix $A(D)$ has a \emph{Douglis--Nirenberg structure}.

Now, let $M:=\frac{\mu N}{2}$, $\mathbf{m}:=(m_1,\dots,m_M)\in \Z^M$ with $m_k<0$ for all $k\in\{1,\dots,M\}$, and let $B(D):=(B_{kj}(D))_{1\leq k\leq M,\, 1\leq j\leq N}$ be an $M\times N$-matrix, whose entries are linear differential boundary operators with constant coefficients and order $\mathrm{ord}(B_{kj}(D))=m_k+t_j$ if $m_k+t_j\geq0$, and $B_{kj}(D)=0$, whenever $m_k+t_j<0$. 

Let $\mathcal{O}\subset \R^n$ be a bounded open set with boundary $\partial\mathcal{O}$ at least of class $C^{\max\{t_j\,:\,1\leq j\leq N\}}$. For $\lambda\in \Lambda$, where $\Lambda\subset\C$ is a closed sector in the complex plane with vertex at the origin, we will consider boundary value problems of the form
\begin{align}\label{Eq_DNsystem}
    \begin{split}
        (\lambda\mathbb{I}-A(D))w & = f \quad \text{in}\ \mathcal{O},\\
        B(D)w & = g \quad \text{on}\ \partial\mathcal{O},
    \end{split}
\end{align}
where the entries $I_{ij}$ of the $N\times N$-matrix $\mathbb{I}$ satisfy $I_{ij}=0$ for $i\neq j$, whereas $I_{ii}$ is the identity operator; the unknown  $w=(w_1,\dots,w_N)^T$ and the data  $f=(f_1,\dots,f_N)^\top$ belong to some suitable functions spaces defined in $\mathcal{O}$, and $g=(g_1,\dots,g_M)^\top$ belongs to some function space defined on $\partial\mathcal{O}$. For simplicity, we will write  $\lambda-A(D)$ instead $\lambda\mathbb{I}-A(D)$.  We say that the couple of operator matrices $(\lambda-A(D),B(D))$ has a \emph{Douglis--Nirenberg structure}, and  we call  \eqref{Eq_DNsystem} a \emph{Douglis--Nirenberg system}.

The entries of $A(D)$ and $B(D)$ are of the form
\begin{equation}\label{Eq_AD_BD}
    A_{ij}(D)=\!\!\!\!\sum\limits_{\alpha\in\N_0^n \atop \vert\alpha\vert\leq s_i+t_j}\!\!\! a_\alpha^{ij}D^\alpha \qquad \text{and} \qquad B_{kj}(D)=\!\!\!\!\sum\limits_{\beta\in\N_0^n \atop \vert\beta\vert\leq m_k+t_j}\!\!\! b_\alpha^{kj}\gamma_0D^\beta,
\end{equation}
respectively, where, as usual, $D:=-i\partial$ and $\gamma_0$ is the trace operator of order zero. The principal parts of    $A(D)$ and $B(D)$ are the $N\times N$ and $M\times N$ operators matrix $A^0(D)$ and $B^0(D)$ whose entries are defined analogously to  \eqref{Eq_AD_BD}, but considering only $\vert\alpha\vert=s_i+t_j$ and $\vert \beta\vert=m_k+t_j$, respectively. In the same way, the principal symbols of $A(D)$ and $B(D)$ are given by $A^0(\xi):=(A_{ij}^0(\xi))_{1\leq i,j\leq N}$ and $B^0(\xi):=(B_{kj}^0(\xi))_{1\leq k\leq M \atop 1\leq j\leq N}$, where
$$ A_{ij}^0(\xi)=\!\!\!\!\sum\limits_{\alpha\in\N_0^n \atop \vert\alpha\vert = s_i+t_j}\!\!\! a_\alpha^{ij}\xi^\alpha \qquad \text{and} \qquad B_{kj}^0(\xi)=\!\!\!\!\sum\limits_{\beta\in\N_0^n \atop \vert\beta\vert = m_k+t_j}\!\!\! b_\alpha^{kj}\xi^\beta, $$
respectively, for $\xi\in\R^n$.
\begin{definition}
    The operator matrix $\lambda - A(D)$ is called \emph{parameter-elliptic} in $\Lambda$  if 
    \begin{equation*}
        \det(\lambda - A^0(\xi))\neq 0 \quad (\lambda\in \Lambda,\  \xi\in \R^n,\  (\lambda,\xi)\neq (0,0)).
    \end{equation*}
    Here, $\lambda-A^0(\xi):=\lambda I_N - A^0(\xi)$, where $I_N$ is the identity matrix in $\C^{N\times N}$.\\
    The couple of operator matrices $(\lambda-A(D),B(D))$ associated to \eqref{Eq_DNsystem}, is called \emph{parameter-elliptic} in $\Lambda$ if $\lambda-A(D)$ is parameter-elliptic in $\Lambda$ and the following \emph{Lopatinskii--Shapiro condition} holds:

    Let $x_0$ be an arbitrary point of the boundary $\partial\mathcal{O}$, and rewrite $(\lambda-A(D),B(D))$ in the coordinate system associated to $x_0$, which is obtained from the original one by a rotation after which the positive $x_n$-axis has the direction of the interior normal to $\partial\mathcal{O}$ at $x_0$. Then, the trivial solution $\tilde{w}=0$ is the only stable solution of the ordinary differential equation on the half-line
    \begin{align*}
        (\lambda - A^0(\xi',D_n))\tilde{w}(x_n) & = 0 \quad (x_n>0),\\
        B^0(\xi',D_n)\tilde{w}(x_n)\vert_{x_n=0} & = 0,
    \end{align*}
    for $\xi'\in\R^{n-1}$ and $\lambda\in \Lambda$ with $(\xi',\lambda)\neq (0,0)$.
\end{definition}

Now, for $n=2$ and $T>0$, let us consider the thermoelastic plate
\begin{equation}\label{sistema3}
\begin{split}
\rho_1 u_{tt} + \beta_1 \Delta^2 u + \alpha \Delta \theta &= 0 \ \text{ in } \ (0,T) \times \mathcal{O},\\
\rho_0 \theta_t - \beta \Delta \theta + \sig\th-\al\Delta u_t &= 0 \ \text{ in } \ (0,T) \times \mathcal{O},
\end{split}
\end{equation}
with boundary conditions
\begin{equation*}\label{eq3_2}
u = 0, \quad \partial_\nu u = 0 \ \text{ and } \ \partial_\nu \theta +\ka\theta = 0 \ \text{ on } \ (0,T) \times \pa\mathcal{O},
\end{equation*}
or with the following boundary conditions
\begin{equation*}\label{eq3_3}
\beta_1\BB_1u +\alpha\theta = 0, \quad \beta_1\BB_2u + \alpha \partial_\nu \theta = 0 \ \text{ and } \ \partial_\nu \theta +\ka \theta = 0 \ \text{ on } \ (0,T) \times \pa\mathcal{O}.
\end{equation*}
\textcolor{black}{Here $\mathcal{O}\subset \R^2$ is a bounded open set with  boundary $\partial\mathcal{O}$ of class $C^4$. }

The PDE system \eqref{sistema3} can be written as the first-order system
$$
\partial_t U - AU = 0,
$$
where $ U = (u_1, u_2, u_3)^\top $ is associated to $ (u, u_t, \theta)^\top $ and $ A $ is given by
\begin{equation}\label{Def_A(D)}
A\ceqq A(D)\ceqq\begin{pmatrix}
0 & 1 & 0 \\
-\frac{\beta_1}{\rho_1} \Delta^2 & 0 & -\frac{\alpha}{\rho_1} \Delta \\
0 & \frac{\al}{\rho_0} \Delta & \frac{\beta}{\rho_0} \Delta - \frac{\sigma}{\rho_0}
\end{pmatrix}
\end{equation}
with principal part
$$
A^0(D)\ceqq\begin{pmatrix}
0 & 1 & \phantom{0}0 \\
-\frac{\beta_1}{\rho_1} \Delta^2 & 0 & -\frac{\alpha}{\rho_1} \Delta \\
0 & \frac{\al}{\rho_0} \Delta & \phantom{-}\frac{\beta}{\rho_0} \Delta
\end{pmatrix}.
$$
The operator $ A $ have  a Douglis--Nirenberg structure  with $ \mathbf{s} = (-2,0,0) $ and $ \mathbf{t} = (4,2,2)$. We will analyze the parameter-ellipticity for the couples of operator matrices $ (\lambda - A, B_1) $ and $ (\lambda - A, B_2)$, where the boundary operator matrices on $ \partial \mathcal{O} $ are given by

\begin{equation}\label{Def_B12(D)}
{\small B_1\ceqq B_1(D)\ceqq \begin{pmatrix}
1 & 0 & 0 \\
\partial_\nu & 0 & 0 \\
0 & 0 & \partial_\nu + \ka
\end{pmatrix}\ \text{ and } \
B_2\ceqq B_2(D)\ceqq\begin{pmatrix}
\beta_1 \BB_1 & 0 & \alpha \\
\beta_1 \BB_2 & 0 & \alpha \partial_\nu \\
0 & 0 & \partial_\nu + \ka
\end{pmatrix}.}
\end{equation}
Now, the boundary operators $ \BB_1 $ and $ \BB_2 $ can be expressed in terms of normal and tangential derivatives (see \cite{LasieckaTriggiani2000Control}, Propositions 3C.7 and 3C.11),
\begin{align*}
\BB_1 u&=\partial_\nu^2 u + \mu \partial_\tau^2 u + \mu (\text{div}\nu)\partial_\nu u,\\
\BB_2 u&=\partial_\nu^3 u + \partial_\nu \partial_\tau^2 u + (1 - \mu)\pa_\tau\pa_\nu\pa_\tau u + \partial_\nu[(\text{div}\nu) \partial_\nu u].
\end{align*}
After a rotation such that $ -\nu $ is in the direction of the  positive {$x_2$-}axis and $ \tau $ is in the direction of the positive {$x_1$-}axis, we obtain that the principal part of $ B_1 $ and $ B_2 $ are
$$
B_1^0(D)\ceqq\begin{pmatrix}
1 & 0 & 0 \\
-iD_2 & 0 & 0 \\
0 & 0 & -iD_2
\end{pmatrix}
$$
and
$$
B_2^0(D)\ceqq\begin{pmatrix}
-\beta_1(D_2^2 + \mu D_1^2) & 0 & \alpha \\
i\beta_1[D_2^3 + D_2D_1^2 + (1 - \mu) D_1D_2D_1] & 0 & -i\al D_2 \\
0 & 0 & -iD_2
\end{pmatrix},
$$
respectively. We recall that  $D_j = -i\partial_j$. Note that the couples of operators matrices $ (\lambda - A, B_1) $ and $ (\lambda - A, B_2) $ have Douglis--Nirenberg structures with $\mathbf{m}_1 = (-4, -3, -1)$ and $\mathbf{m}_2 = (-2, -1, -1)$.

\begin{lemma}\label{lemma1}
The couples of operators matrices  $ (\lambda - A, B_1) $ and $ (\lambda - A, B_2) $ are parameter-elliptic in the closed sector $ \overline{\mathbb{C}_+} $.
\end{lemma}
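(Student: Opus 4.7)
The claim splits into checking, for each $j \in \{1,2\}$, (a) that $\det(\lambda-A^0(\xi))\neq 0$ on $\overline{\C_+}\times\R^2\setminus\{(0,0)\}$ and (b) the Lopatinskii--Shapiro condition for $(\lambda-A,B_j)$ at an arbitrary boundary point. For (a), expanding the determinant along the first column and using $\Delta\leftrightarrow-|\xi|^2$ yields
\[
\det(\lambda-A^0(\xi)) = \lambda^3 + \tfrac{\beta}{\rho_0}|\xi|^2\lambda^2 + \tfrac{\rho_0\beta_1+\alpha^2}{\rho_0\rho_1}|\xi|^4\lambda + \tfrac{\beta\beta_1}{\rho_0\rho_1}|\xi|^6.
\]
The degenerate cases $\xi=0$ or $\lambda=0$ are immediate, and for $\xi\neq 0$ the three coefficients $p,q,r>0$ of the cubic in $\lambda$ satisfy the Routh--Hurwitz inequality $pq-r=\beta\alpha^2|\xi|^6/(\rho_0^2\rho_1)>0$, so no root lies in $\overline{\C_+}$.

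For (b), I work in the rotated frame of the statement ($x_2>0$ the half-space, $\partial_\nu=-\partial_2$) and eliminate $\tilde w_2=\lambda\tilde w_1$ from the first row; a stable solution then reduces to a pair $(\tilde w_1,\tilde w_3)\in L^2(0,\infty)^2$ solving the coupled ODE system obtained from the remaining two rows. The plan is to form an energy identity by pairing the plate ODE with $\bar{\tilde w}_2=\bar\lambda\bar{\tilde w}_1$ and the heat ODE with $\rho_0\bar{\tilde w}_3$ in $L^2(0,\infty)$, using the half-line Green identity
\[
\int_0^\infty(\xi_1^2-\partial_2^2)^2\tilde w_1\,\bar{\tilde w}_1\,dx_2 = \mathfrak a(\tilde w_1,\tilde w_1) + \bigl(\text{boundary terms in }\BB_1^0\tilde w_1(0),\,\BB_2^0\tilde w_1(0)\bigr)
\]
with $\mathfrak a(u,u)=\mu\|\Delta u\|^2+(1-\mu)\|\nabla^2 u\|^2$ (coercive since $\mu\in(0,\tfrac12)$), and summing. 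The coupling cross-term $\alpha\lambda\langle(\xi_1^2-\partial_2^2)\tilde w_1,\tilde w_3\rangle$ meets its partner in a purely imaginary combination, which drops out on taking real parts. For $B_1$, the clamped conditions $\tilde w_1(0)=\partial_2\tilde w_1(0)=0$ and the principal part $\partial_2\tilde w_3(0)=0$ of the Robin condition make every boundary contribution vanish. For $B_2$, the natural conditions give $\beta_1\BB_1^0\tilde w_1(0)=-\alpha\tilde w_3(0)$ and $\BB_2^0\tilde w_1(0)=0$ (using $\partial_\nu\tilde w_3(0)=0$), and I expect the residual boundary contribution to appear as a conjugate pair with zero real part. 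The real part of the summed identity then reduces to
\[
(\Re\lambda)\bigl[\beta_1\mathfrak a(\tilde w_1,\tilde w_1)+\rho_1|\lambda|^2\|\tilde w_1\|^2+\rho_0\|\tilde w_3\|^2\bigr]+\beta\bigl[\xi_1^2\|\tilde w_3\|^2+\|\partial_2\tilde w_3\|^2\bigr]=0,
\]
a sum of nonnegative terms, forcing $\tilde w_3\equiv 0$ in all cases and $\tilde w_1\equiv 0$ when $\Re\lambda>0$. On the imaginary axis with $\tilde w_3=0$, the heat ODE collapses to $(\xi_1^2-\partial_2^2)\tilde w_1=0$ if $\lambda\neq 0$ and the plate ODE to $(\xi_1^2-\partial_2^2)^2\tilde w_1=0$ if $\lambda=0$; the available boundary conditions together with $L^2$-decay force $\tilde w_1\equiv 0$ in each subcase.

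\textbf{Main obstacle.} The delicate step is the boundary-term bookkeeping for $B_2$, where (unlike for the clamped case $B_1$) the plate boundary contributions from the Green identity are nonzero and only cancel against temperature-coupling terms by way of the natural transmission conditions. Checking that the surviving combination is conjugate-antisymmetric uses $\overline{\partial_2\tilde w_1(0)}=\partial_2\bar{\tilde w}_1(0)$ together with the specific pairing in the two natural conditions, and the coercivity of $\mathfrak a$ provided by $\mu<\tfrac12$ is what ultimately translates the energy identity into control of $\tilde w_1$.
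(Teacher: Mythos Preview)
Your proposal is correct and follows essentially the same route as the paper: the determinant is handled by Routh--Hurwitz, and the Lopatinskii--Shapiro condition is verified via the same energy identity obtained by pairing the plate row with $\bar\lambda\bar{\tilde w}_1$ and the heat row with $\bar{\tilde w}_3$ (not $\rho_0\bar{\tilde w}_3$---that factor would spoil the cross-term cancellation, though your displayed identity is the correct one). Two small remarks: the nonnegativity of $\mathfrak a$ holds for all $\mu\in(0,1)$, not just $\mu<\tfrac12$; and for the subcase $\Re\lambda=0$, $\lambda\neq 0$, the paper avoids checking boundary conditions by feeding $(\xi_1^2-\partial_2^2)\tilde w_1=0$ back into the plate row to get $\rho_1\lambda^2\tilde w_1=0$ directly, which is a bit cleaner than your route. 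For $\lambda=0$ the paper invokes an external lemma on parameter-ellipticity of the biharmonic with free boundary operators, whereas you assert the conclusion; the underlying $2\times2$ check is routine but should be made explicit.
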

\begin{proof}
First, note that
\begin{align*}
\det(\lambda - A^0(\xi)) &= \det \begin{pmatrix}
\la & -1 & 0 \\
\frac{\beta_1}{\rho_1} |\xi|^4 & \lambda & -\frac{\alpha}{\rho_1} |\xi|^2 \\
0 & \frac{\al}{\rho_0} |\xi|^2 & \lambda + \frac{\beta}{\rho_0} |\xi|^2
\end{pmatrix}\\
&= \lambda \Big(\la\big(\lambda + \tfrac{\beta}{\rho_0} |\xi|^2\big) + \tfrac{\alpha^2}{\rho_0 \rho_1} |\xi|^4\Big) - \tfrac{\beta_1}{\rho_1}\vert \xi\vert^4 \big( -(\lambda + \tfrac{\beta}{\rho_0} |\xi|^2)-0\big)\\
&= \lambda^3 + \tfrac{\beta}{\rho_0} |\xi|^2 \lambda^2 + \tfrac{\alpha^2+\be_1\rh_0}{\rho_0 \rho_1} |\xi|^4 \lambda + \tfrac{\be\beta_1}{\rho_0 \rho_1} |\xi|^6
\end{align*}
for $ \xi \in \mathbb{R}^n $ and $ \lambda \in \overline{\mathbb{C}_+} $. It is clear that this determinant is different from zero if $ \xi=0 $ and $ \lambda \neq 0 $. If $ \xi \in \mathbb{R}^n\smallsetminus\{0\}$, then
$$
\det(\lambda - A^0(\xi)) = |\xi|^6 p\left( \frac{\lambda}{|\xi|^2} \right)
$$
with
$$
p(\lambda)\ceqq\lambda^3 + a \lambda^2 + b \lambda + c, \quad a\ceqq \frac{\beta}{\rho_0}, \quad b\ceqq\frac{\alpha^2 + \beta_1 \rho_0}{\rho_0 \rho_1} \ \text{ and } \ c\ceqq\frac{\be\beta_1}{\rho_0 \rho_1}.
$$
Then, from Lemma \ref{lemmaAN}, it follows that $ \lambda - A(D) $ is parameter-elliptic in $ \overline{\mathbb{C}_+} $.

We only prove the Lopatinskii--Shapiro condition for  $ (\lambda - A, B_2) $. The proof of this condition for the couple $ (\lambda - A, B_1) $ is similar and easier. For the proof for $ (\lambda - A, B_2) $, we consider the system of ordinary differential equations with initial conditions
\begin{align*}
\lambda - A^0(\xi_1, D_2) \widetilde{w}(x_2) &= 0 \quad (x_2 > 0),\\
B_2^0(\xi_1, D_2) \widetilde{w}(0) &= 0,
\end{align*}
for all $ (\xi_1, \lambda) \in \mathbb{R} \times \overline{\mathbb{C}_+} $ with $ (\xi_1, \lambda) \neq (0,0) $. Let $ \widetilde{w} { =: } (u, \eta, \theta)^\top $ be a stable solution of this problem.
Then, $ \eta = \lambda u $, and therefore
\begin{align}
\rho_1 \lambda^2 u + \beta_1 (\partial_2^2 - \xi_1^2)^2 u + \alpha (\partial_2^2 - \xi_1^2) \theta &= 0,\label{eq3_4}\\
\alpha\la(\xi_1^2 - \partial_2^2) u + \rho_0 \lambda \theta + \beta (\xi_1^2 - \partial_2^2) \theta &= 0,\label{eq3_5}
\end{align}
with  initial conditions
\begin{equation}\label{eq3_6}
\begin{split}
\beta_1 ((\partial_2^2 - \mu \xi_1^2) u)(0) + \alpha \theta(0) &= 0,\\
((\partial_2^3 - (2 - \mu) \xi_1^2\partial_2) u)(0) &= 0, \\
\theta'(0) &= 0.
\end{split}
\end{equation}
Multiplying \eqref{eq3_4} by $ \overline{\lambda u} $, \eqref{eq3_5} by $ \overline{\theta} $, summing up and integrating over $ (0, \infty) $, we obtain
\begin{equation}\label{eq3_7}
\begin{split}
0 &= \rho_1\la|\lambda|^2 \| u \|_{L^2(0,\infty)}^2 + \beta_1 \overline\lambda((\partial_2^2 - \xi_1^2)^2 u, u)_{L^2(0,\infty)}\\
&\quad + \alpha\overline\la((\partial_2^2 - \xi_1^2) \theta, u)_{L^2(0,\infty)} + \alpha\lambda((\xi_1^2 - \partial_2^2) u, \theta)_{L^2(0,\infty)} \\
&\quad + \rh_0\|\th\|^2_{L^2(0,\infty)}+\be((\xi_1^2 - \partial_2^2) \theta, \theta)_{L^2(0,\infty)}.
\end{split}
\end{equation}
Now, we know from (4.2) in \cite{BARRAZAMARTINEZ2019Regularity} that
$$
((\partial_2^2 - \xi_1^2) u, u)_{L^2(0,\infty)} =\mathcal{P}+[(\partial_2^2 - \mu \xi_1^2) u](0)\partial_2\overline{u(0)} + [(-\partial_2^3+(2 - \mu) \xi_1^2 \partial_2)u](0)\overline{u(0)},
$$
where
$$
\Pp\ceqq\mu \| (\partial_2^2 - \xi_1^2) u \|_{L^2(0,\infty)}^2 + (1 - \mu)\big( \| \xi_1^2 u \|_{L^2(0,\infty)}^2 + \| \partial_2^2 u \|_{L^2(0,\infty)}^2
+ 2 \| \xi_1 \partial_2 u \|_{L^2(0,\infty)}^2\big).
$$
From this, \eqref{eq3_6}, and \eqref{eq3_7}, it follows
\begin{align*}
0 &= \rho_1\la|\lambda|^2 \| u \|_{L^2(0,\infty)}^2 + \beta_1\overline{\lambda} \mathcal{P}
+ \alpha \overline{\lambda} \Big(-\xi_1^2(\theta, u)_{L^2(0,\infty)}
+ \int_0^\infty \partial_2^2 \theta \, \overline{u} \, dx_2 \Big)\\
&\quad + \alpha \lambda\Big( \xi_1^2(u, \theta)_{L^2(0,\infty)}
- \int_0^\infty \partial_2^2 u \, \overline{\theta} \, dx_2\Big)
+ (\rho_0 \lambda + \beta \xi_1^2) \| \theta \|^2_{L^2(0,\infty)}\\
&\quad-\beta\int_0^\infty \partial_2^2 \theta \, \overline{\theta} \, dx_2.
\end{align*}
After  integration by parts, we obtain
\begin{align*}
0&=\rho_1\la|\lambda|^2 \| u \|_{L^2(0,\infty)}^2+\beta_1\overline\la \mathcal{P}+i2\al\xi_1^2\Im((u, \th)_{L^2(0, \infty)}+\la(\pa_2u, \pa_2\th)_{L^2(0, \infty)})\nonumber\\
&\quad+(\rh_0\la+\be\xi_1^2)\|\th\|^2_{L^2(0, \infty)}+\be\|{\partial_2\th}\|^2_{L^2(0, \infty)}
\end{align*}
and therefore
$$
0 = (\Re\lambda)\big[\rho_1| \lambda|^2 \| u \|_{L^2(0,\infty)}^2+\beta_1\Pp\big] + (\rho_0 \Re \lambda + \beta \xi_1^2) \| \theta \|_{L^2(0,\infty)}^2
+ \beta\| {\partial_2\th} \|_{L^2(0,\infty)}^2.
$$
Because of $ \Re\lambda\geq0 $ {and $\Pp\ge 0$}, it follows that $ {\partial_2\th}=0 $. Since {$\th(x_2)\to 0$ for $x_2\to\infty$, we get} $ \theta = 0 $. Thus,
$$
0 = (\Re\lambda) \big[ \rho_1|\la|^2 \|u\|_{L^2(0,\infty)}^2 + \beta_1\mathcal{P}\big].
$$
If $ \lambda \neq 0 $, it follows from $ \theta = 0 $ and equation \eqref{eq3_5} that
$$
(\xi_1^2 - \partial_2^2) u = 0 \ \text{ in } \ (0, \infty).
$$
From this and \eqref{eq3_4}, we obtain
$$
0 = \rho_1 \lambda^2 u + \beta_1 (\partial_2^2 - \xi_1^2)((\partial_2^2 - \xi_1^2)u) + \alpha (\partial_2^2 - \xi_1^2) \theta
= \rho_1 \lambda^2 u,
$$
and so, $ u = 0 $. If $ \lambda = 0 $, it follows from \eqref{eq3_4} that $(\partial_2^2 - \xi_1^2)^2 u = 0$, with $(\partial_2^2 - \mu \xi_1^2) u(0) = 0$ and $[(-\pa^3_2+(2-\mu)\xi^2_1\pa_2)u](0)=0$. Then $ u = 0 $ because the negative of the biharmonic operator, supplemented with the free boundary operators, is parameter-elliptic (see Lemma 4.1 in \cite{BARRAZAMARTINEZ2019Regularity}).
\end{proof}

\textcolor{black}{For $\mathbf{r} \ceqq (r_1,r_2,r_3)\in\R^3$ let $H^{\mathbf{r}}(\mathcal{O})$ be the product of Sobolev spaces given by $H^{\mathbf{r}}(\mathcal{O})\ceqq H^{r_1}(\mathcal{O})\times H^{r_2}(\mathcal{O}) \times H^{r_3}(\mathcal{O})$ with the usual norm. Moreover, for $q\in\R$ let $\mathbf{r}+q \ceqq (r_1+q,r_2+q,r_3+q)$.}

\begin{lemma}\label{Prop 3.2}
Let now $ \mathcal{O} \subset \mathbb{R}^2 $ be a domain with $ C^4 $-boundary
$ \partial \mathcal{O} {=}\Gamma_1 \cup \Gamma_2 $,
where $ \Gamma_1 $ and $ \Gamma_2 $ are compact, disjoint parts of the boundary of $ \mathcal{O} $\textcolor{black}{, $\mathbf{s}\ceqq (-2,0,0)$, and $\mathbf{t}\ceqq(4,2,2)$.}
We define the operator $ \mathcal{A} : \mathbb{H} \supset D(\mathcal{A}) \rightarrow \mathbb{H} $ by
$$
\mathbb{H} \ceqq \{ w=(w_1,w_2,w_3) \in H^{-\mathbf{s}}(\mathcal{O}): w_1 = \partial_\nu w_1 = 0 \textup{ on } \Gamma_1 \},
$$
$$
D(\mathcal{A}) \ceqq \left\{ w \in H^{\mathbf{t}}(\mathcal{O}): \mathcal{A}(D)w \in \mathbb{H},\; B_1(D)w = 0 \textup{ on } \Gamma_1 \ \textup{and} \ B_2(D)w = 0 \textup{ on } \Gamma_2 \right\}
$$
and $ \mathcal{A} w \ceqq A(D)w $, where $A(D)$, $B_1(D)$ and $B_2(D)$ \textcolor{black}{are given in  \eqref{Def_A(D)} and \eqref{Def_B12(D)}}.  On $ \mathbb{H} $ we consider the inner product induced by
$$
\| w \|_{\mathbb{H}}^2 \ceqq \| w_1 \|_{H^2_{\Ga_1}(\mathcal{O})}^2 + \| w_2 \|_{L^2(\mathcal{O})}^2 + \| w_3 \|_{L^2(\mathcal{O})}^2.
$$
Then $ \mathcal{A} $ is the generator of a $ C_0 $-semigroup of contractions on $ \mathbb{H} $.
\end{lemma}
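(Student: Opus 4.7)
The plan is to apply the Lumer--Phillips theorem, which requires that $D(\mathcal{A})$ be dense in $\mathbb{H}$, that $\mathcal{A}$ be dissipative, and that $\la_0-\mathcal{A}$ have range equal to $\mathbb{H}$ for some $\la_0>0$. Density is routine: it follows by approximating elements of $\mathbb{H}$ by smooth functions satisfying the boundary conditions defining $D(\mathcal{A})$ via standard extension-and-correction arguments near $\Ga_1$ and $\Ga_2$.

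For dissipativity, since the Lumer--Phillips property is invariant under passing to an equivalent Hilbert structure, I would endow $\mathbb{H}$ with the physically weighted inner product
\[
(w,\ps)_* \ceqq \beta_1(w_1,\ps_1)_{H^2_{\Ga_1}(\mathcal{O})} + \rho_1(w_2,\ps_2)_{L^2(\mathcal{O})} + \rho_0(w_3,\ps_3)_{L^2(\mathcal{O})},
\]
whose norm is equivalent to the one given in the statement. Then $\Re(\mathcal{A}w,w)_*$ is evaluated by integration by parts, along the same lines as Lemma~\ref{A is dissipative}. Green's identity for the biharmonic operator applied to $-\frac{\beta_1}{\rho_1}(\De^2 w_1,w_2)_{L^2(\mathcal{O})}$ cancels $\beta_1(w_2,w_1)_{H^2_{\Ga_1}}$ and leaves boundary contributions of the form $\int_{\pa\mathcal{O}} \beta_1\mathcal{B}_2 w_1 \bar{w}_2 \,d\sig - \int_{\pa\mathcal{O}} \beta_1\mathcal{B}_1 w_1 \,\pa_\nu \bar{w}_2 \,d\sig$. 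On $\Ga_1$ these vanish because $\mathcal{A}w\in\mathbb{H}$ forces $w_2=\pa_\nu w_2=0$ there; on $\Ga_2$ the $B_2$-conditions $\beta_1\mathcal{B}_1 w_1 = -\al w_3$ and $\beta_1\mathcal{B}_2 w_1 = -\al\,\pa_\nu w_3$ make these contributions cancel exactly the $\al$-coupling boundary integrals arising between the $w_2$- and $w_3$-components. Combined with the Robin condition $\pa_\nu w_3+\ka w_3=0$ on $\pa\mathcal{O}$, the net outcome is
\[
\Re(\mathcal{A}w,w)_* = -\be\|\na w_3\|_{L^2(\mathcal{O})^2}^2 - \sig\|w_3\|_{L^2(\mathcal{O})}^2 - \be\ka\|w_3\|_{L^2(\pa\mathcal{O})}^2 \leq 0.
\]

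The range condition is where Lemma~\ref{lemma1} is pivotal. Since $\Ga_1$ and $\Ga_2$ are disjoint compact parts of $\pa\mathcal{O}$, the Lopatinskii--Shapiro condition for the mixed boundary value problem---$B_1$ on $\Ga_1$ and $B_2$ on $\Ga_2$---is purely local at each boundary point and therefore already verified in Lemma~\ref{lemma1}. Consequently the combined Douglis--Nirenberg system $(\la-A(D), B_1\restriction_{\Ga_1}, B_2\restriction_{\Ga_2})$ is parameter-elliptic in $\overline{\C_+}$, and the standard parameter-elliptic theory referenced at the start of Section~3 provides, for $\la_0>0$ sufficiently large, bijectivity of $\la_0-\mathcal{A}\colon D(\mathcal{A})\to\mathbb{H}$, in particular the required surjectivity. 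The main obstacle I anticipate is the bookkeeping in the dissipativity computation---verifying that each $\mathcal{B}_j$-boundary contribution on $\Ga_2$ is cancelled by the corresponding $\al$-coupling term---but this is essentially the same algebra as in Lemma~\ref{A is dissipative}, now with the membrane and Kelvin--Voigt terms simply absent.
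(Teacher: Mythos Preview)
Your approach is essentially the same as the paper's: dissipativity by integration by parts (mirroring Lemma~\ref{A is dissipative}), surjectivity of $\lambda_0-\mathcal{A}$ from the parameter-ellipticity established in Lemma~\ref{lemma1} (the paper invokes \cite[Remark~4.4]{BARRAZAMARTINEZ2019Regularity} and \cite[Theorem~4.9]{DPRS23} at this step), and conclusion via Lumer--Phillips.

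One point to tighten: your sentence ``the Lumer--Phillips property is invariant under passing to an equivalent Hilbert structure'' is not literally correct---being a \emph{contraction} semigroup is norm-dependent. What your argument actually yields is that $\mathcal{A}$ is dissipative in the weighted norm $\|\cdot\|_*$, hence generates a contraction semigroup with respect to $\|\cdot\|_*$; by norm equivalence this is a $C_0$-semigroup on $\mathbb{H}$, but not necessarily of contractions in the unweighted norm stated in the lemma. The paper's proof is silent on this and simply asserts $\Re(\mathcal{A}w,w)_{\mathbb{H}}\le 0$, so you are in fact being more careful than the paper here; just phrase the conclusion accordingly (or note that the lemma is only used to guarantee well-posedness of \eqref{Problem_a-priori}, for which any equivalent norm suffices).
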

\begin{proof}
Note that $\mathbb{H} = H^2_{\Ga_1}(\mathcal{O}) \times L^2(\mathcal{O}) \times L^2(\mathcal{O})$, and also
\begin{align*}
D(\mathcal{A})& = \{ w \in \big(H^4(\mathcal{O}) \cap H^2_{\Ga_1}(\mathcal{O})\big) \times H^2_{\Ga_1}(\mathcal{O}) \times H^2(\mathcal{O}):
\beta_1\BB_1 w_1 + \alpha w_3 = 0 \textup{ on } \Gamma_2, \\
&\qquad\beta_1\BB_2 w_1 + \alpha \partial_\nu w_3 = 0 \textup{ on } \Gamma_2 \ \text{ and } \ \partial_\nu w_3 + \ka w_3 = 0 \textup{ on } \partial \mathcal{O}\}.
\end{align*}
The norm in $ \mathbb{H} $ is equivalent to the standard norm in
$ H^2(\mathcal{O}) \times L^2(\mathcal{O}) \times L^2(\mathcal{O}) $. By integration by parts, we obtain that
$$
\Re(\mathcal{A} w, w)_{\mathbb{H}} \leq 0 \quad \forall w \in D(\mathcal{A}),
$$
and therefore $ \mathcal{A} $ is dissipative.

 By Remark 4.4 in \cite{BARRAZAMARTINEZ2019Regularity} and Lemma \ref{lemma1}, we have that the problem
\begin{align}\label{Problem_parameter-elliptic-AB}
\begin{split}
(\lambda - A(D))w &= (0,0,0)^\top \ \text{ in } \ \mathcal{O}, \\
B_1(D)w &= (0,0,0)^\top \ \text{ on } \ \Gamma_1, \\
B_2(D)w &= (0,0,0)^\top \ \text{ on } \ \Gamma_2,
\end{split}
\end{align}
is parameter-elliptic in $ \overline{\mathbb{C}_+} $.
Because of the coefficient functions of the operators $ A(D), B_1(D) $ and $ B_2(D) $ are constant, one can show similarly to the scalar case in Theorem 4.9 in \cite{DPRS23} that there is a   $ \lambda_0 > 0 $ such that for every $f\in\mathbb H$ there exists $ w \in D(\mathcal{A}) $ such that
$$
(\lambda_0 - \mathcal{A}) w = f.
$$
This shows that $ \lambda_0 - \mathcal{A} $ is surjective.
Now, by the Lumer--Phillips theorem, we conclude that $ \mathcal{A} $ generates a $ C_0 $-semigroup of contractions on $ \mathbb{H} $.
\end{proof}

 The following result is the desired a priori estimate. In the proof, we use results on parameter-elliptic Douglis--Nirenberg systems including estimates in parameter-dependent norms. For this, we define for $s\in\R$ and $u\in H^s(\R^n)$ the parameter-dependent norm 
\[ \|u\|_{H^s_\lambda(\R^n)}  := \langle\lambda\rangle^{s/2-n/4} \Big\| u\Big(\frac{\cdot}{\langle \lambda\rangle^{1/2}}\Big)\Big\|_{H^s(\R^n)}. \]
The norms $\|u\|_{H^s_\lambda(\R^{n-1})}$ and $\|u\|_{H^s_\lambda (\mathcal O)}$ are defined analogously and by restriction, respectively, while the norm on the boundary $\partial\mathcal O$ is defined by localization and a partition of unity. As usual, we define $H^{\mathbf{t}}_\lambda(\mathcal O) := H^{t_1}_\lambda(\mathcal O)\times H^{t_2}_\lambda(\mathcal O)\times H^{t_3}_\lambda(\mathcal O) $ for $\mathbf{t}=(t_1,t_2,t_3)$. 

For a discussion of these parameter-dependent norms, we refer to \cite[Subsection~1.1]{Grubb-Kokholm93}. For $s>0$, we have the equivalence of the norms
\[ \|u\|_{H^s_\lambda(\R^n)} \approx \|u\|_{H^s(\R^n)} + |\lambda|^{s/2} \|u\|_{L^2(\R^n)}. \]
It was shown in \cite[Theorem~3.3.5]{ThesisRau2023} that, under appropriate smoothness assumptions, parameter-elliptic Douglis--Nirenberg systems are uniquely solvable for sufficiently large $\lambda$ and that their solutions satisfy a uniform a priori estimate with respect to these parameter-dependent norms (see also \cite[Theorem 1.5.1]{Roitberg99}; for scalar equations this result can be found in \cite[Theorem~4.5]{DPRS23}).

\begin{lemma}\label{Prop 3.3}
Suppose $\mathcal{O}$ possesses now a $C^5$-boundary $\partial\mathcal{O}=\Gamma_1\cup\Gamma_2$ with $\Gamma_1$ and $\Gamma_2$ as above. Let $A$, $B_1$, $B_2$, $\mathbf{s}$, and $\mathbf{t}$ be as before. Furthermore, let $T>0$ and
$$ g\in L^2((0,T),H^{1/2}(\Gamma_2))\cap H^{1/4}((0,T),L^2(\Gamma_2)). $$
Let $w\in C([0,T],H^{\mathbf{t}}(\mathcal{O})\cap C^1([0,T],H^{-\mathbf{s}}(\mathcal{O}))$ be the unique solution of 
\begin{align}\label{Problem_a-priori}
\begin{split}
\partial_t w - A(D)w & = (0,0,0)^\top \qquad \text{in}\ (0,T)\times \mathcal{O},\\
B_1(D)w & = (0,0,0)^\top \qquad \text{on}\ (0,T)\times \Gamma_1,\\
B_2(D)w & = (0,g,0)^\top \qquad \text{on}\ (0,T)\times \Gamma_2,\\
w(0,\cdot) & \ = (0,0,0)^\top \qquad \text{in}\ \mathcal{O},
\end{split}
\end{align}
Then it holds
$$ \Vert w\Vert_{L^2((0,T) , H^{\mathbf{t}-1/2}(\mathcal{O}))\cap H^{(2\mathbf{t}-1)/4}((0,T),L^2(\mathcal{O}))} \leq C\Vert g\Vert_{L^2((0,T),L^2(\Gamma_2))}. $$
\end{lemma}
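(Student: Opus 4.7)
The plan is to reduce \eqref{Problem_a-priori} to a stationary resolvent problem by means of a shifted Fourier transform in time, apply the uniform parameter-dependent a priori estimate for the Douglis--Nirenberg system $(\lambda - A(D), B_1, B_2)$, and then convert the $\lambda$-pointwise bound back to the target anisotropic space-time norm using Plancherel.

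First, we extend $g$ by zero outside $(0,T)$ and set $\tilde w(t)\ceqq e^{-\lambda_0 t}w(t)$, $\tilde g(t)\ceqq e^{-\lambda_0 t}g(t)$ for a fixed $\lambda_0>0$. Extending $\tilde w$ by zero for $t<0$ (permissible since $w(0,\cdot)=0$) and, for $t>T$, by the contractive semigroup flow of Lemma~\ref{Prop 3.2} (integrability at infinity being provided by the weight $e^{-\lambda_0 t}$), the pair $(\tilde w,\tilde g)$ satisfies
$$ (\partial_t + \lambda_0 - A(D))\tilde w = 0,\quad B_1(D)\tilde w = 0,\quad B_2(D)\tilde w = (0,\tilde g,0)^\top. $$
Taking the Fourier transform in $t\in\R$ yields, for each $\tau\in\R$ and $\lambda\ceqq\lambda_0+i\tau\in\overline{\C_+}$, the parameter-elliptic resolvent problem
\begin{align*}
(\lambda-A(D))\widehat{\tilde w}(\tau,\cdot) &= 0\quad\text{in}\ \mathcal{O},\\
B_1(D)\widehat{\tilde w}(\tau,\cdot) &= 0\quad\text{on}\ \Gamma_1,\\
B_2(D)\widehat{\tilde w}(\tau,\cdot) &= (0,\widehat{\tilde g}(\tau,\cdot),0)^\top\quad\text{on}\ \Gamma_2.
\end{align*}

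Next, we invoke the uniform parameter-dependent a priori estimate from \cite[Theorem~3.3.5]{ThesisRau2023}. By Lemma~\ref{lemma1} the system is parameter-elliptic of Douglis--Nirenberg type in $\overline{\C_+}$ with $\mathbf{s}=(-2,0,0)$ and $\mathbf{t}=(4,2,2)$; the strengthened boundary regularity $\partial\mathcal{O}\in C^5$ (one order above the base level $C^{\max t_j}=C^4$) is used to push the estimate one half-derivative below the natural trace scale. We expect to obtain, uniformly in $\tau\in\R$,
$$ \|\widehat{\tilde w}(\tau,\cdot)\|_{H^{\mathbf{t}-1/2}_\lambda(\mathcal{O})} \leq C\, \|\widehat{\tilde g}(\tau,\cdot)\|_{L^2(\Gamma_2)}. $$
The loss of $1/2$ derivative reflects that $\widehat{\tilde g}(\tau,\cdot)$ lies exactly $1/2$ derivative below the natural trace space $H^{1/2}_\lambda(\Gamma_2)$ associated with the second row of $B_2$ (whose Douglis--Nirenberg index is $m_{2,2}=-1$).

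Finally, exploiting $\|u\|_{H^s_\lambda(\mathcal{O})}^2 \approx \|u\|_{H^s(\mathcal{O})}^2 + |\lambda|^s\|u\|_{L^2(\mathcal{O})}^2$ for $s>0$ together with Plancherel in $\tau$, the first summand of the parameter-dependent norm yields $L^2_t H^{\mathbf{t}-1/2}_x$ control of $\tilde w$, while the $|\lambda|^s$-summand yields $H^{(2\mathbf{t}-1)/4}_t L^2_x$ control; both are majorised by $C\|\tilde g\|_{L^2(\R,L^2(\Gamma_2))}^2 \leq C\|g\|_{L^2((0,T),L^2(\Gamma_2))}^2$. Restricting to $(0,T)$ and removing the bounded weight $e^{-\lambda_0 t}$ then yields the claim on $w$. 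The main technical obstacle will be pushing the parameter-elliptic a priori estimate one half-derivative below the natural Douglis--Nirenberg level: standard formulations demand the boundary datum in $H^{-m_k-1/2}_\lambda(\Gamma)$, while here only the weaker $L^2(\Gamma_2)$-regularity is available, and bridging this gap -- via the Roitberg-type negative trace scale of \cite{Roitberg99}, a duality argument against the adjoint Douglis--Nirenberg problem, or an explicit $\lambda$-controlled lifting of the datum -- is where the heart of the argument lies and is precisely what forces the extra boundary regularity $\partial\mathcal{O}\in C^5$.
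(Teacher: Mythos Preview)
Your proposal is correct and follows essentially the same route as the paper: extend in time, multiply by an exponential weight $e^{-\lambda_0 t}$, take the Fourier transform to reduce to the parameter-dependent boundary value problem, invoke the uniform Douglis--Nirenberg a priori estimate of \cite[Theorem~3.3.5]{ThesisRau2023} (enabled by Lemma~\ref{lemma1}), and return via Plancherel and the norm equivalence $\|u\|_{H^s_\lambda}^2\approx\|u\|_{H^s}^2+|\lambda|^s\|u\|_{L^2}^2$. The only cosmetic difference is in the extension step: the paper extends $g$ by zero to $\R$ and then \emph{solves} the full-line problem (observing that by uniqueness this extends $w$), whereas you build the extension of $w$ by hand (zero for $t<0$, semigroup evolution for $t>T$); both yield the same object. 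Your closing paragraph on the half-derivative loss and the role of the $C^5$ assumption is accurate and slightly more explicit than the paper, which simply absorbs this point into the citation of \cite[Theorem~3.3.5]{ThesisRau2023} and the remark following the proof.
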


\begin{proof}
In a first step,  we  extend the function  $g$  appearing in problem \eqref{Problem_a-priori} by zero   to a function  $\tilde{g}\in L^2((-\infty,\infty),H^{1/2}(\Gamma_2))\cap H^{1/4}((-\infty,\infty),L^2(\Gamma_2))$ with $\supp \tilde{g}\subset [0,T]$ (see \cite[Theorem 1, Section 4.3.2.]{Triebel78}). 
Define $\tilde w$ as the unique solution of \eqref{Problem_a-priori} in the time interval $(-\infty , \infty)$ with $g$ being replaced by $\tilde g$. Then, by uniqueness of the solution on finite time intervals, we see that $\tilde w$ is an extension of $w$. 
Now, we consider the problem \eqref{Problem_a-priori} in $(-\infty,\infty)$ instead of $(0,T)$ and with $\tilde{\phi}$ instead of $\phi$ for $\phi\in\{w, g\}$. 

In the next step, we apply the one-dimensional Fourier transform $\mathscr{F}_{t\to\lambda}$ to obtain a parameter-dependent problem. As we want to apply the results on parameter-elliptic Douglis--Nirenberg systems to all $\lambda\in\R$, we shift the parameter $i\lambda$ to $\lambda_1+i\lambda$ with 
sufficiently large $\lambda_1>0$. For this, we multiply the new system by $e^{-\lambda_1t}$. Denoting $\hat{\phi}(\lambda):=\mathscr{F}_{t\to \lambda}(e^{-\lambda_1(\cdot)}\tilde{\phi})(\lambda)$ for $\phi\in\{w, g\}$,  we obtain 
\begin{align}\label{Problem_a-priori_lambda}
\begin{split}
(\lambda_1+i\lambda - A(D))\hat{w}(\lambda) & = (0,0,0)^\top \qquad \  \text{in}\ \mathcal{O},\\
B_1(D)\hat{w}(\lambda) & = (0,0,0)^\top \qquad \ \text{on}\  \Gamma_1,\\
B_2(D)\hat{w}(\lambda) & = (0,\hat{g}(\lambda),0)^\top \quad \text{on}\  \Gamma_2,
\end{split}
\end{align}
for $\lambda \in\R$. Due to Lemma \ref{lemma1}, we have that $ (\lambda_1 + i\lambda - A, B_1) $ and $ (\lambda_1+i\lambda - A, B_2) $ are parameter-elliptic in the parameter-range $\lambda_1+i\lambda\in\overline{\C_+}$. Therefore, we can apply \cite[Theorem~3.3.5]{ThesisRau2023} which yields that for sufficiently large $\lambda_1>0$, the boundary value problem \eqref{Problem_a-priori_lambda} has a unique solution 
$\hat{w}(\lambda)\in  H^{\mathbf{t}}_{\lambda_1+i\lambda}(\mathcal{O})$, for which the a priori estimate
\begin{equation*}
    \Vert \hat{w}(\lambda)\Vert_{H^{\mathbf{t}-1/2}_\lambda(\mathcal{O})} \leq C \Vert \hat{w}(\lambda)\Vert_{H^{\mathbf{t}-1/2}_{\lambda_1+i\lambda}(\mathcal{O})} \leq C \Vert \hat{g}(\lambda)\Vert_{L^2(\Gamma_2)}
\end{equation*}
holds for almost every $\lambda\in\R$. 

As the Fourier transform is an isometric isomorphism, one can show that 
\[  \Vert \widetilde{w} \Vert_{L^2(\R , H^{\mathbf{t}-1/2}(\mathcal{O}))\cap H^{(2\mathbf{t}-1)/4}(\R,L^2(\mathcal{O}))}^2\approx \int_\R \Vert \hat{w}(\lambda)\Vert^2_{H^{\mathbf{t}-1/2}_\lambda (\mathcal{O})}\,d\lambda \]
(see also \cite[Lemma~3.4.1]{ThesisRau2023}). Therefore, we can estimate 
\begin{align*}
    \Vert w &\Vert_{L^2((0,T) , H^{\mathbf{t}-1/2}(\mathcal{O}))\cap H^{(2\mathbf{t}-1)/4}((0,T),L^2(\mathcal{O}))}^2\\
    &  \le  \Vert \widetilde{w} \Vert_{L^2(\R , H^{\mathbf{t}-1/2}(\mathcal{O}))\cap H^{(2\mathbf{t}-1)/4}(\R,L^2(\mathcal{O}))}^2\\
    & \leq C \int_\R \Vert \hat{w}(\lambda)\Vert^2_{H^{\mathbf{t}-1/2}_\lambda (\mathcal{O})}\,d\lambda\\
    & \leq C \int_\R \Vert \hat{g}(\lambda)\Vert_{L^2(\Gamma_2)}^2\,d\lambda\\
    & \le C \|\widetilde g\|_{L^2(\R, L^2(\Gamma_2)} \\
    & \leq C \Vert g\Vert_{L^2((0,T),L^2(\Gamma_2))}^2.
\end{align*}
\end{proof}
We remark that the condition $C^5$ on the boundary $\partial\mathcal{O}$ is a technical requirement of Theorem 4.5 in \cite{DPRS23}, which is related to the localization procedure necessary to bring the problem from $\overline{\mathcal{O}}$ to the half-plane $\overline{\R^2_+}$.

\section{Lack of exponential stability}\label{exponential instability}

Here we will show that our system \eqref{ec1}-\eqref{initial2} is not exponentially stable if the Kelvin--Voigt damping term is absent. Our proof is supported by the following result which corresponds to Theorem 3.1 in \cite{Gomez2020Stability}.

\begin{theorem}\label{main tool}
Let $H_0$ be a closed subspace of a Hilbert space $H$. Let $\left(T_0(t)\right)_{t\in\R}$ be a unitary group on $H_0$ and $\left(T(t)\right)_{t\geq0}$ be a $C_0$-semigroup over $H$. If the difference $T(t)-T_0(t) : H_0\to H$ is a compact operator for all $t>0$, then  $\left(T(t)\right)_{t\geq0}$ is not exponentially stable.
\end{theorem}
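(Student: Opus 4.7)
The plan is to argue by contradiction, exploiting the rigidity of the unitary group together with the compactness of the perturbation. Assume that $(T(t))_{t\geq 0}$ is exponentially stable, so that there exist constants $M\geq 1$ and $\omega>0$ with $\|T(t)\|_{\mathscr L(H)}\leq M e^{-\omega t}$ for every $t\geq 0$. The statement is informative only when $H_0$ is infinite-dimensional (in finite dimensions every bounded operator on $H_0$ is compact, and the hypothesis is then satisfied trivially by any exponentially stable $T$), so I would tacitly assume $\dim H_0=\infty$, which is the situation relevant to Section~\ref{exponential instability}.

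First I would select an orthonormal sequence $(f_n)_{n\in\N}$ in $H_0$. Because $T_0(t)\colon H_0\to H_0$ is unitary for each $t$, the sequence $(T_0(t)f_n)_n$ is still orthonormal in $H$, and in particular
\[
\|T_0(t)f_n-T_0(t)f_m\|_H=\sqrt{2}\qquad (n\neq m),
\]
so no subsequence of $(T_0(t)f_n)_n$ can be Cauchy in $H$.

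Next, I would fix $t>0$ so large that $M e^{-\omega t}<\tfrac{1}{2}$, which ensures $\|T(t)(f_{n_k}-f_{n_l})\|_H<\tfrac{\sqrt{2}}{2}$ for any subsequence. Since $T(t)-T_0(t)\colon H_0\to H$ is compact and $(f_n)$ is bounded, there is a subsequence $(f_{n_k})_k$ along which $(T(t)-T_0(t))f_{n_k}$ converges in $H$, and is therefore Cauchy. Writing
\[
T_0(t)(f_{n_k}-f_{n_l}) = -\bigl[(T(t)-T_0(t))(f_{n_k}-f_{n_l})\bigr] + T(t)(f_{n_k}-f_{n_l}),
\]
I would deduce $\|T_0(t)(f_{n_k}-f_{n_l})\|_H<\sqrt{2}$ for all sufficiently large $k,l$, contradicting the orthonormality identity of the previous step.

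The argument is a standard \emph{compact plus small-norm is almost-compact} decomposition, and I do not expect technical difficulties in the individual steps. The one point worth flagging as the main obstacle, or rather the main hidden hypothesis, is that $H_0$ must be infinite-dimensional; verifying that the intended application supplies such a closed subspace (on which an auxiliary conservative group acts) is in practice the real work when using this theorem rather than proving it.
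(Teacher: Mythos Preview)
The paper does not prove this theorem; it is quoted verbatim from \cite{Gomez2020Stability} (their Theorem~3.1) and used as a black box in the proof of Theorem~\ref{non-exponencial}. So there is no ``paper's proof'' to compare against.

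Your argument is correct and self-contained. The decomposition
\[
T_0(t)(f_{n_k}-f_{n_l}) = -\bigl[(T(t)-T_0(t))(f_{n_k}-f_{n_l})\bigr] + T(t)(f_{n_k}-f_{n_l})
\]
together with the exponential bound $\|T(t)\|<1/2$ for large $t$ and the Cauchy property of the compact piece along the extracted subsequence forces $\|T_0(t)(f_{n_k}-f_{n_l})\|<\sqrt{2}$ for large $k,l$, contradicting unitarity. Nothing is missing.

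You are also right to flag the infinite-dimensionality of $H_0$ as a hidden hypothesis: the statement as written is false when $\dim H_0<\infty$ (take $H_0$ one-dimensional inside $H=\ell^2$ with $T_0(t)=e^{it}$ and let $T(t)$ be any exponentially stable diagonal semigroup; then $T(t)-T_0(t)$ has finite rank, hence is compact, yet $T$ is exponentially stable). In the intended application the subspace $\HH_0=\{0\}^2\times H^1_0(\Omega_2)\times L^2(\Omega_2)\times\{0\}$ is manifestly infinite-dimensional, so the issue does not arise there, but it is a genuine omission in the abstract statement.
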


We set $\HH_0\coloneqq \{0\}\times\{0\} \times H^1_0(\Omega_2)\times L^2(\Omega_2)\times\{0\}$ endowed with the norm $\left\|\hat w\right\|^2_{\HH_0}\coloneqq\be_2\left\|\na\hat w_3\right\|^2_{L^2(\Om_2)^2}+\rh_2\left\|\hat w_4\right\|^2_{L^2(\Om_2)}$ for $\hat w=(0, 0, \hat w_3, \hat w_4, 0)\in\HH_0$. Next, we consider the wave equation with zero Dirichlet boundary condition and suitable initial conditions,
\begin{equation}\label{system}
\begin{cases}
\rho_2\hat{v}_{tt}-\beta_2\Delta\hat{v}=0 \ \textup{ in } \ \R^+\times\Omega_2,\\
\hat{v}=0 \ \textup{ on } \ \R^+\times I,\\
\hat{v}(0, \cdot)=\hat{v}_0, \ \hat{v}_t(0, \cdot)=\hat{v}_1 \ \textup{ in } \ \Omega_2.
\end{cases}
\end{equation}
We introduce the operator
$$
\AA_0: D(\AA_0)\subset\HH_0\to\HH_0\textup{ given by } \AA_0\begin{pmatrix}
0\\
0\\
\hat w_3\\
\hat w_4\\
0
\end{pmatrix}
\coloneqq
\begin{pmatrix}
0\\
0\\
\hat w_4\\
\frac{\be_2}{\rh_2}\De\hat w_3\\
0
\end{pmatrix},
$$
where $D(\AA_0)\coloneqq \{0\}\times\{0\}\times H^2(\Omega_2)\cap H^1_0(\Omega_2)\times H^1_0(\Omega_2)\times\{0\}$. Putting $\hat w=(\hat w_j)^\top_{j=1, \ldots, 5}\coloneqq (0, 0, \hat v, \hat v_t, 0)^\top$ we have that the system \eqref{system} can be written as the Cauchy problem
$$
\pa_t\hat w(t)=\AA_0\hat w(t) \ (t>0) \;\; \textup{with} \;\; \hat w(0)=(0, 0, \hat v_0, \hat v_1, 0)^\top.
$$
It can be proven that $\AA_0$ generates a unitary group $(\mathscr T_0(t))_{t\in\R}$ on $\HH_0$, see e.g. \cite[p. 443]{bgh2023mana}. In the following lemma we obtain an estimate that relates the normal derivative of a solution of \eqref{system} with the initial conditions.

\begin{lemma}\label{normal derivative}
Let $\hat v$ be a sufficiently regular solution of \eqref{system} with $\hat v_0\in H^1(\Omega_2)$ and $\hat v_1\in L^2(\Omega_2)$. For $t>0$ the following estimate holds
\begin{equation}\label{estimate normal}
\left\|\pa_\nu\hat v\right\|^2_{L^2((0, t), L^2(I))}\leq C\big(\|\hat v_1\|^2_{L^2(\Om_2)}+\|\na\hat v_0\|^2_{L^2(\Om_2)^2}\big)
\end{equation}
with $C$ being a positive constant.
\end{lemma}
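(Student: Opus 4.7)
The estimate \eqref{estimate normal} is the classical hidden/trace regularity for the Dirichlet wave equation, and the natural route is the Rellich multiplier method. The plan is as follows.

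First, I would recall that energy is conserved for \eqref{system}: multiplying the equation by $\bar{\hat v}_t$ and integrating by parts (the boundary term vanishes because $\hat v=0$, hence $\hat v_t=0$, on $I$) yields
\[
 E(t)\ceqq \tfrac12\bigl(\rho_2\|\hat v_t(t)\|_{L^2(\Om_2)}^2 + \beta_2\|\nabla\hat v(t)\|_{L^2(\Om_2)^2}^2\bigr) = E(0),
\]
which already controls the right-hand side of \eqref{estimate normal}. Next, I choose a fixed vector field $h\in C^1(\overline{\Om_2};\R^2)$ with $h=-\nu$ on $I$, i.e.\ $h$ equals the outward unit normal to $\Om_2$. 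Such an extension exists because $I$ is $C^4$.

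The core step is then to multiply the equation $\rho_2\hat v_{tt}-\beta_2\Delta\hat v=0$ by $h\cdot\nabla\bar{\hat v}$, integrate over $(0,t)\times\Om_2$ and take the real part. Integration by parts in time gives
\[
 \rho_2\Re\!\int_0^t\!\!\int_{\Om_2}\hat v_{tt}(h\cdot\nabla\bar{\hat v}) = \rho_2\Re\!\Bigl[\int_{\Om_2}\hat v_t(h\cdot\nabla\bar{\hat v})\Bigr]_0^t + \tfrac{\rho_2}{2}\!\int_0^t\!\!\int_{\Om_2}(\div h)|\hat v_t|^2,
\]
where the boundary contribution on $I$ vanishes thanks to $\hat v_t=0$ on $I$. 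For the Laplacian term I apply Green's identity: using that $\nabla\hat v = (\partial_{-\nu}\hat v)(-\nu)$ on $I$ (since $\hat v\equiv 0$ there), the boundary term becomes
\[
 -\beta_2\!\int_0^t\!\!\int_I(\partial_{-\nu}\hat v)(h\cdot\nabla\bar{\hat v})\,d\sigma = \beta_2\!\int_0^t\!\!\int_I(h\cdot\nu)|\partial_\nu\hat v|^2\,d\sigma = -\beta_2\!\int_0^t\!\!\int_I|\partial_\nu\hat v|^2\,d\sigma,
\]
which is precisely the desired quantity with a controlled sign. The remaining interior term $\beta_2\Re\int_0^t\!\int_{\Om_2}\nabla\hat v\cdot\nabla(h\cdot\nabla\bar{\hat v})$ expands into a sum of integrals of the form $\int(\partial_k h_j)\partial_j\hat v\,\partial_k\bar{\hat v}$ plus a term proportional to $h\cdot\nabla|\nabla\hat v|^2$ that, after another integration by parts, is controlled by $\int(\div h)|\nabla\hat v|^2$ up to a boundary term on $I$ which, since $\nabla\hat v$ is normal on $I$, combines with the one already produced without spoiling the sign.

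Putting everything together and solving for the boundary integral yields
\[
 \beta_2\!\int_0^t\!\!\int_I|\partial_\nu\hat v|^2\,d\sigma\,ds \le C\sup_{s\in[0,t]}\!\!\Bigl|\!\int_{\Om_2}\!\hat v_t(h\cdot\nabla\bar{\hat v})\Bigr| + C\!\int_0^t\!\!\bigl(\|\hat v_t\|_{L^2(\Om_2)}^2+\|\nabla\hat v\|_{L^2(\Om_2)^2}^2\bigr)ds.
\]
The pointwise term is bounded by $E(s)\lesssim E(0)$ via Cauchy--Schwarz and $\|h\|_\infty$, and each integrand in the time integral equals $2E(s)/\min\{\rho_2,\beta_2\}=2E(0)/\min\{\rho_2,\beta_2\}$ by conservation, so the right-hand side is bounded by $C\,t\,E(0)$ plus $C\,E(0)$, hence by $C(\|\hat v_1\|_{L^2(\Om_2)}^2+\|\nabla\hat v_0\|_{L^2(\Om_2)^2}^2)$ on any fixed interval $[0,t]$. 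A density argument (first assume $\hat v_0\in H^2\cap H^1_0$, $\hat v_1\in H^1_0$ so that all manipulations are justified in the strong sense, then pass to the limit in \eqref{estimate normal}) completes the proof. The only real obstacle is the careful bookkeeping of the interior integral $\int\nabla\hat v\cdot\nabla(h\cdot\nabla\bar{\hat v})$, but the Dirichlet condition $\hat v=0$ on $\partial\Om_2=I$ makes the sign of the crucial boundary term unambiguous.
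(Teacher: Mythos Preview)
Your proposal is correct and follows essentially the same approach as the paper: both establish energy conservation, choose a $C^1$ vector field $h$ on $\overline{\Om_2}$ with $h|_I=-\nu$, multiply the wave equation by $h\cdot\nabla\bar{\hat v}$, and use the Rellich identity to isolate the boundary term $\tfrac12\int_I|\partial_\nu\hat v|^2\,dS$ with the correct sign. The remaining interior terms are then controlled by the conserved energy via Cauchy--Schwarz and Young's inequalities, exactly as you outline; the paper's bookkeeping of the interior integral (their equation for $\Re\int_{\Om_2}\Delta\hat v\,(h\cdot\overline{\nabla\hat v})\,dx$) confirms that your ``combines without spoiling the sign'' remark is accurate.
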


\begin{proof}
Let $t>0$. Multiplying the differential equation in \eqref{system} by the conjugate of $\hat v_t$, integrating over $\Om_2$, employing integration by parts and after taking real part yields
\begin{equation}\label{EQUAt1}
\frac{1}{2}\frac{d}{ds}\big(\rh_2\left\|\hat v_t(s)\right\|^2_{L^2(\Om_2)}+\be_2\left\|\na\hat v(s)\right\|^2_{L^2(\Om_2)^2}\big)=0
\end{equation}
for $0\leq s\leq t$. The initial data in \eqref{system} and \eqref{EQUAt1} imply
\begin{equation}\label{EqU7.3}
\int_{\Om_2}\rh_2|\hat v_t(s)|^2+\be_2|\na\hat v(s)|^2dx = \int_{\Om_2}\rh_2|\hat v_1|^2+\be_2|\na\hat v_0|^2dx.
\end{equation}
Let $h : \overline{\Om_2}\to\R^2$ be a vector field of class $C^1$ in $\overline{\Om_2}$ with $h\big|_I=-\nu$. Using the conditions that satisfy $h$ and $\hat v$ on $I$, theorem of divergence and the identity $h\cdot\na|\hat v_t(s)|^2=\div(h|\hat v_t(s)|^2)-(\div h)|\hat v_t(s)|^2$, we obtain
\begin{equation}\label{EqU7.4}
\begin{split}
\int_{\Om_2}\Re(\hat v_{tt}(s)h\cdot\overline{\na\hat v(s)})dx&=\frac{1}{2}\int_{\Om_2}(\div h)|\hat v_t(s)|^2dx\\
&\quad+\frac{d}{ds}\int_{\Om_2}\Re(\hat v_t(s)h\cdot\overline{\na\hat v(s)})dx.
\end{split}
\end{equation}
On the other side,  we have
\begin{equation}\label{EqU7.5}
\begin{split}
\int_{\Om_2}\Re(\De\hat v(s)h&\cdot\overline{\na\hat v(s)})dx=-\Re\int_{\Om_2}\sum\limits_{j,k=1}^2(\pa_j\hat v(s))(\pa_j h_k)(\overline{\pa_k\hat v(s)})\,dx\\
&\hspace{0.1555 cm}+\frac{1}{2}\int_{\Om_2}(\div h)|\na\hat v(s)|^2dx+\frac{1}{2}\int_I|\pa_\nu\hat v(s)|^2dS,
\end{split}
\end{equation}
where $dS$ stands for the surface measure on $I$. 
Multiplying the differential equation in \eqref{system} by $h\cdot\overline{\na\hat v}$ and using \eqref{EqU7.4} together with \eqref{EqU7.5}, we get
\begin{equation*}
\begin{split}
\int_I|\pa_\nu\hat v(s)|^2dS&=2\frac{\rh_2}{\be_2}\frac{d}{ds}\int_{\Om_2}\Re(\hat v_t(s)h\cdot\overline{\na\hat v(s)})dx+\frac{\rh_2}{\be_2}\int_{\Om_2}(\div h)|\hat v_t(s)|^2dx\\
&\quad+2\Re\int_{\Om_2}\sum\limits_{j,k=1}^2(\pa_j\hat v(s))(\pa_j h_k)(\overline{\pa_k\hat v(s)})\,dx \\
&\quad -\int_{\Om_2}(\div h)|\na\hat v(s)|^2dx.
\end{split}
\end{equation*}
Note that $|h|$, $|\div h|$ and $|\pa_j h_k|$ are bounded scalar fields on $\overline{\Om_2}$. Integrating above with respect to the variable $s$ from 0 to $t$, and then using the inequalities of Cauchy--Schwarz and Young together with equality \eqref{EqU7.3}, we deduce \eqref{estimate normal}.
\end{proof}

Recall that $\AA : D(\AA)\subset\HH\to\HH$ is a densely defined closed linear operator with $0\in\rh(\AA)$, see Theorem \ref{C_0 semigroup} and Lemma \ref{Prop_IR}. The normed space $\big(\HH, \left\|\AA^{-1}\cdot\right\|_{\HH}\big)$ has a completion $\left(\HH_{-1},\left\| \; \cdot \; \right\|_{-1}\right)$, where $\left\| \; \cdot \; \right\|_{-1}\ceqq\left\|\AA^{-1}\cdot\right\|_{\HH}$, called the \it{extrapolation space} of $\HH$ generated by $\AA$. It is known that for the adjoint operator $\AA' : D(\AA')\subset\HH'\to\HH'$ one has that $\HH_{-1}=[D(\AA')]'$, see Corollary 1.4.7 in \cite[p. 271]{amann1995linear}. Below, we present the main result of this section.  In the proof, Lemma \ref{Prop 3.3} is used. Therefore, we will assume that $\partial\Omega_1$ is of class $C^5$.


\begin{theorem}\label{non-exponencial}
\textcolor{black}{Suppose that $\Omega_1$ has a $C^5$-boundary $\partial\Omega_1$}. For $m=0$, we have that the system \eqref{ec1}-\eqref{initial2} does not have exponential decay.
\end{theorem}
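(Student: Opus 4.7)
The plan is to apply Theorem~\ref{main tool} with $H = \HH$, $H_0 = \HH_0$, $T(t) = \mathscr{T}(t)$ and $T_0(t) = \mathscr{T}_0(t)$, which reduces matters to showing that $\mathscr{K}(t) \ceqq \mathscr{T}(t) - \mathscr{T}_0(t) : \HH_0 \to \HH$ is compact for every $t > 0$. Given $\hat{w}_0 = (0, 0, \hat{v}_0, \hat{v}_1, 0)^\top \in \HH_0$, I would write $\mathscr{T}(\cdot)\hat{w}_0 = (u, u_t, v, v_t, \theta)^\top$ and $\mathscr{T}_0(\cdot)\hat{w}_0 = (0, 0, \hat{v}, \hat{v}_t, 0)^\top$, so that $\mathscr{K}(\cdot)\hat{w}_0 = (u, u_t, V, V_t, \theta)^\top$ with $V \ceqq v - \hat{v}$. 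Since $v = u$ on $I$ by \eqref{condiciones naturales} and $\hat{v} = 0$ on $I$ by \eqref{system}, the membrane difference satisfies $V|_I = u|_I$ with zero initial data, so $V$ solves a Dirichlet-type wave equation in $\Omega_2$ driven solely by the trace $u|_I$. The plate--heat triple $(u, u_t, \theta)$, with zero initial conditions, is exactly the solution of the boundary value problem of Lemma~\ref{Prop 3.3} on $\Omega_1$ (with $\Gamma_1 = \Gamma$, $\Gamma_2 = I$) with boundary inhomogeneity $(0, g, 0)^\top$ on $I$, where $g = -\beta_2 \partial_\nu v$ comes from the transmission condition \eqref{ec13} with $m = 0$.

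The core of the argument is to combine Lemma~\ref{Prop 3.3} with Lemma~\ref{normal derivative}. Setting $\mathbf{t} \ceqq (4,2,2)$, Lemma~\ref{Prop 3.3} yields
\[
\|(u, u_t, \theta)\|_{L^2((0,t), H^{\mathbf{t}-1/2}(\Omega_1)) \,\cap\, H^{(2\mathbf{t}-1)/4}((0,t), L^2(\Omega_1))} \leq C \|\partial_\nu v\|_{L^2((0,t), L^2(I))}.
\]
The right-hand side is split as $\partial_\nu v = \partial_\nu \hat{v} + \partial_\nu V$: Lemma~\ref{normal derivative} controls the first summand by $\|\partial_\nu \hat{v}\|_{L^2((0,t),L^2(I))} \leq C \|\hat{w}_0\|_{\HH_0}$, while $\partial_\nu V$ is handled by rerunning the Rellich-type multiplier identity from the proof of Lemma~\ref{normal derivative}, now carrying the nonzero Dirichlet trace $V|_I = u|_I$; the resulting boundary terms involving $u|_I$, $\nabla_\tau u|_I$ and $u_t|_I$ are absorbed via trace inequalities together with the $\HH$-contractivity $\|\mathscr{T}(\cdot)\hat{w}_0\|_{C([0,t],\HH)} \leq \|\hat{w}_0\|_{\HH_0}$ granted by Theorem~\ref{C_0 semigroup}, with a short-time closure to prevent circularity.

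Once this space-time regularity gain for $(u, u_t, \theta)$ is in place, a parallel hidden regularity argument for the non-homogeneous Dirichlet wave equation governing $V$ upgrades the regularity of $(V, V_t)$ beyond the baseline $H^1(\Omega_2) \times L^2(\Omega_2)$. Interpolation of these $L^2$-in-time gains with the baseline bound $\|\mathscr{T}(\cdot)\hat{w}_0\|_{C([0,t],\HH)} \leq \|\hat{w}_0\|_{\HH_0}$ then lifts the gain to pointwise regularity at time $t$, so that the family $\{\mathscr{K}(t)\hat{w}_0 : \|\hat{w}_0\|_{\HH_0} \leq 1\}$ is bounded in a product of Sobolev spaces each compactly embedded in the corresponding factor of $\HH$. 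Rellich's theorem then yields compactness of $\mathscr{K}(t) : \HH_0 \to \HH$, and Theorem~\ref{main tool} completes the proof. The main obstacle I anticipate is the bookkeeping of the Rellich identity for $V$ in a way that avoids circularity with the regularity of $u|_I$ (which is itself controlled through Lemma~\ref{Prop 3.3} by $\partial_\nu v$); the precise exponents in Lemma~\ref{Prop 3.3} together with the short-time closure are essential here.
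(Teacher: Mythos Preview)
Your overall strategy---apply Theorem~\ref{main tool} with the unitary wave group and bring in Lemmas~\ref{Prop 3.3} and~\ref{normal derivative}---matches the paper's, but the mechanism you propose for proving compactness of $\mathscr K(t)$ differs substantially from the paper's and contains a real gap.

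The paper does \emph{not} try to show that $\mathscr K(t)$ is bounded into a space compactly embedded in $\HH$. Instead, it computes $\tfrac{d}{dt}\mathscr E(t)$ with $\mathscr E(t)\ceqq\tfrac12\|\mathscr K(t)w_0\|_\HH^2$ and, using $\Re(\AA w,w)_\HH\le 0$, $\Re(\AA_0\hat w,\hat w)_\HH=0$, and $\hat w_4|_I=0$, arrives at the energy inequality
\[
\mathscr E(t)\ \le\ \beta_2\,\Re\big(\partial_\nu\hat w_3,\,w_2\big)_{L^2((0,t),L^2(I))}.
\]
This expresses the squared norm of the difference as a \emph{bilinear} pairing between $\partial_\nu\hat w_3$ (from the undamped problem) and $w_2|_I$ (from the full problem). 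Lemma~\ref{normal derivative} bounds the first factor uniformly; Lemma~\ref{Prop 3.3} combined with an extrapolation-space bound on $\partial_t w_2^k$ and the Aubin--Lions lemma yields a subsequence along which $w_2^{k_j}|_I$ converges in $L^2((0,t),L^2(I))$. Cauchy-ness of $\mathscr K(t)w_0^{k_j}$ in $\HH$ then follows directly from the bilinear inequality; no lift from space--time regularity to pointwise-in-time regularity is ever needed.

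Your route, by contrast, requires a uniform bound on $\|\partial_\nu v\|_{L^2((0,t),L^2(I))}$ for the \emph{full} $v$, and your splitting $\partial_\nu v=\partial_\nu\hat v+\partial_\nu V$ leads to the loop you flag yourself: the Rellich identity for the nonhomogeneous Dirichlet wave equation satisfied by $V$ produces a boundary term $\int_0^t\!\int_I |V_t|^2\,dS\,ds=\int_0^t\!\int_I |u_t|^2\,dS\,ds$ (this term is present precisely because $V|_I\neq 0$), and $u_t|_I$ is only controlled, via Lemma~\ref{Prop 3.3}, by $\|\partial_\nu v\|_{L^2((0,t),L^2(I))}$ again. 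The proposed ``short-time closure'' does not evidently work: the constant in Lemma~\ref{Prop 3.3} comes from parameter-elliptic estimates after a Fourier transform in $t$ and is not shown to vanish as $t\to 0$, and the Rellich constant does not vanish either, so there is no smallness to absorb. Separately, the sentence ``hidden regularity for $V$ upgrades $(V,V_t)$ beyond $H^1\times L^2$'' conflates boundary-trace regularity (what Rellich delivers) with interior regularity; this step would need an independent justification.

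The remedy is exactly the paper's energy identity, which breaks the circularity because only $\partial_\nu\hat w_3$---never $\partial_\nu w_3$ or $\partial_\nu V$---enters the bound on $\mathscr E(t)$.
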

\begin{proof}
It is very simple to see that $\HH_0$ is a closed subspace of $\HH$. We will show that $\mathscr T(t)-\mathscr T_0(t) \colon\HH_0\to\HH$ is a compact operator for all $t>0$. It is enough to prove that for any $t>0$ the operator $\mathscr T(t)-\mathscr T_0(t) : \mathscr D\to\HH$ is compact because $\mathscr D\coloneqq  \{0\}^2\times[\mathscr D(\Omega_2)]^2\times\{0\}$ is a dense subspace of $\HH_0$. For $w_0\in\mathscr D$ and $t>0$, we set
\begin{equation}\label{eq4.3,1}
\mathscr E(t)\coloneqq  \frac{1}{2}\|\mathscr T(t)w_0-\mathscr T_0(t)w_0\|^2_{\HH}.
\end{equation}
Note that $D(\AA)\ni w(t) \coloneqq  \mathscr T(t)w_0$ and $D(\AA_0)\ni \hat w(t) \coloneqq  \mathscr T_0(t)w_0$ because $w_0\in D(\AA)\cap D(\AA_0)$. By \eqref{abstract with eta=0} and \eqref{eq4.3,1}, we can write
\begin{equation}\label{5-1}
\begin{split}
\frac{d}{dt}\mathscr E(t)&=\Re\left(\AA w(t),w(t)\right)_{\HH} + \Re(\AA_0 \hat w(t),\hat w(t))_{\HH}\\
&\quad-\Re\left(\AA w(t),\hat w(t)\right)_{\HH}-\Re(\AA_0 \hat w(t), w(t))_{\HH}.
\end{split}
\end{equation}
From \eqref{dissipativity} we know $ \Re\left(\AA w(t),w(t)\right)_{\HH} \le 0$. Using integration by parts and taking into account that $\hat w_4(t)=0$ on $I$, we get the following expression
\begin{align*}
(\AA_0 \hat w(t),\hat w(t))_{\HH}=i2\be_2\Im\left(\na\hat w_4(t), \na\hat w_3(t)\right)_{L^2(\Om_2)^2}
\end{align*}
and thus $\Re(\AA_0 \hat w(t),\hat w(t))_{\HH}=0$. From the definition of the operator $\AA$, it is immediate that
$$
\left(\AA w(t),\hat w(t)\right)_{\HH}=\be_2(\na w_4(t), \na\hat w_3(t))_{L^2(\Om_2)^2}-\be_2(\na w_3(t), \na\hat w_4(t))_{L^2(\Om_2)^2}.
$$
Employing integration by parts, we obtain
\begin{align*}
&(\AA_0 \hat w(t), w(t))_{\HH}=\be_2(\na\hat w_4(t), \na w_3(t))_{L^2(\Om_2)^2}\\
&-\be_2(\na\hat w_3(t), \na w_4(t))_{L^2(\Om_2)^2}-\be_2(\pa_\nu\hat w_3(t), w_4(t))_{L^2(I)}.
\end{align*}
The last two equalities produce the following expression
\begin{align*}
&-\left(\AA w(t),\hat w(t)\right)_{\HH}-(\AA_0 \hat w(t), w(t))_{\HH}=i2\be_2\Im(\na\hat w_3(t), \na w_4(t))_{L^2(\Om_2)^2}\\
&+i2\be_2\Im(\na w_3(t), \na\hat w_4(t))_{L^2(\Om_2)^2}+\be_2(\pa_\nu\hat w_3(t), w_4(t))_{L^2(I)}.
\end{align*}
Taking real part in the last equality, inserting this into \eqref{5-1} and integrating over $(0, t)$, we obtain the estimate
\begin{equation}\label{integral}
\mathscr E(t)\le \beta_2\Re \left(\pa_\nu\hat w_3, w_2\right)_{L^2((0, t), L^2(I))}.
\end{equation}
The equality $\mathscr E(0)=0$ was used and the fact that $w_2(t)=w_4(t)$ on $I$.

Let $(w_0^{k})_{k\in\N}\subset\mathscr D$ be a bounded sequence in $\HH_0$. We must prove that $(\mathscr T(t)w^k_0-\mathscr T_0(t)w^k_0)_{k\in\N}$ possesses a convergent subsequence in $\HH$. This would show the compactness of $\mathscr T(t)-\mathscr T_0(t) \colon \mathscr D\to\HH$. We set $w^{k}(t) \coloneqq  \mathscr T(t)w_0^{k}$ and $\hat w ^{k}(t) \coloneqq\mathscr T_0(t)w_0^{k}$. Due to Lemma \ref{normal derivative},
\begin{equation}\label{EQ7.4}
\|\pa_\nu\hat w_3^k\|_{L^2((0, t), L^2(I))}\leq C\|\hat w^k(0)\|_{\HH_0}=C\|w^k_0\|_{\HH_0}\leq C\;\;\;\textup{for all}\;\;\;k\in\N.
\end{equation}
Let $\phi\in\Yy\ceqq \{0\}\times H^2_0(\Om_1)\times \{0\}\times H^1_0(\Om_2)\times \{0\}$.  Taking $w\in D(\AA)$,   we get   by straightforward calculation, using integration by parts,
\begin{align*}
\left(\AA w, \phi\right)_{\HH}&=\rho_1\Big(-\frac{\beta_1}{\rho_1}\Delta^2w_1 - \frac{\alpha}{\rho_1}w_5, \phi_2\Big)_{L^2(\Om_1)} + \rho_2\Big(\frac{\beta_2}{\rho_2}\Delta w_3 , \phi_4\Big)_{L^2(\Omega_2)}\\
& = (w,z_\phi)_{\HH},
\end{align*}
where $z_\phi\ceqq (-\phi_2, 0, -\phi_4,0,-\frac{\alpha}{\rho_0}\Delta \phi_2)^\top\in \HH$.
Therefore,
$$
\big|\left(\AA w, \phi\right)_{\HH}\big|\leq C_\phi\left\|w\right\|_{\HH},
$$
where $C_\phi\coloneqq \left\|z_\phi\right\|_\HH$. In consequence, the  linear functional $\psi_\phi\colon D(\AA)\to \C$ defined by 
$\psi_\phi w\ceqq\left(\AA w, \phi\right)_{\HH}$ is continuous 
with respect to the norm $\|\cdot\|_{\HH}$. As $D(\AA)$ is dense in $\HH$, there exists a unique  linear and continuous extension of $\psi_\phi$ to $\HH$, which is denoted by $\tilde \psi_\phi  : \HH\to\C$. Note that, by means of  the identifications provided via the  isometric Riesz  isomorphism, $\phi\in\HH'$, and for $ w\in D(\AA)$ we have
\begin{align*}
\langle w, \tilde \psi_\phi \rangle_{\HH\times \HH'}&=\tilde \psi_\phi  w=\psi_\phi  w=\left(\AA w, \phi\right)_{\HH}=\langle\AA w, \phi\rangle_{\HH\times \HH'}\\
 & = (w,z_\phi)_\HH = \langle w, z_\phi\rangle_{\HH\times \HH'}.
\end{align*}
Thus, $\phi\in D(\AA')$. With this, it is shown that $\Yy\subset D(\AA')$ as sets, and that  $\AA'\phi = z_\phi$.\\
Now, for $\phi\in \Yy$ it holds
\begin{align*}
    \Vert \phi\Vert_{D(\AA')}^2 & \leq C\big( \Vert \phi\Vert_{\HH'}^2 + \Vert \AA'\phi\Vert_{\HH'}^2\big)\\
    & = C\big( \Vert \phi\Vert_{\HH'}^2 + \Vert z_\phi\Vert_{\HH'}^2\big)\\
    & = C\big( \Vert \phi\Vert_{\HH}^2 + \Vert z_\phi\Vert_{\HH}^2\big)\\
    & = C\Big( \rho_1\Vert \phi_2\Vert_{L^2(\Omega_1)}^2 + \rho_2\Vert \phi_4\Vert_{L^2(\Omega_2)}^2 + \beta_1\Vert \phi_2\Vert_{H^2_\Gamma(\Omega_1)}^2 + \beta_2\Vert \nabla \phi_4\Vert_{L^2(\Omega_2)^2}^2\\
    & \qquad \qquad + \frac{\alpha^2}{\rho_0^2}\Vert \Delta \phi_2\Vert_{L^2(\Omega_1)}^2\Big)\\
    & \leq C\Vert \phi\Vert_{\Yy}^2,
\end{align*}
where $\Vert \cdot\Vert_{\Yy}$ is the usual norm in the product space that defines $\Yy$.

For $t>0$ and $0\leq s\leq t$, $\partial_s w^k(s)$ can be considered as an element of $D(\AA')'$ and for $\phi\in\Yy$ we have
$$\vert \partial_s w^k(s)(\phi)\vert \leq \Vert \partial_sw^k(s)\Vert_{D(\AA')'}\Vert \phi\Vert_{D(\AA')}\leq C\Vert\partial_sw^k(s)\Vert_{D(\AA')'}\Vert \phi\Vert_{\Yy}.$$
Then, 
$$\Vert \partial_sw^k(s)\Vert_{\Yy'}\leq C\Vert\partial_sw^k(s)\Vert_{D(\AA')'}.$$
Therefore,
\begin{align*}
    \sup_{s\in[0,t]}\|\pa_sw_2^k(s)\|_{H^{-2}(\Omega_1)} & \leq C\sup_{s\in[0,t]}\|\pa_sw^k(s)\|_{\Yy'}\leq C\sup_{s\in[0,t]}\|\pa_sw^k(s)\|_{D(\AA')'}\\
    & \leq C\sup_{s\in[0,t]}\|\pa_sw^k(s)\|_{\HH_{-1}} =C\sup_{s\in [0,t]}\|\AA w^k(s)\|_{\HH_{-1}}\\
    & = C\sup_{s\in[0,t]}\|w^k(s)\|_{\HH}\leq C
\end{align*}
for all $k\in\N$. It follows that $(\pa_tw^k_2)_{k\in\N}$ is bounded in $L^2((0,t), H^{-2}(\Om_1))$.

 Note that $\widetilde{w}^k:=(w_1^k,w_2^k,w_5^k)^\top$ satisfies
\begin{align*}
\partial_t \widetilde{w}^k - A(D)\widetilde{w}^k & = (0,0,0)^\top & \text{in}\ (0,\infty)\times \Omega_1,\\
B_1(D)\widetilde{w}^k & = (0,0,0)^\top & \text{on}\ (0,\infty)\times \Gamma_1,\\
B_2(D)\widetilde{w}^k & = (0,-\beta_2\partial_\nu w_3^k, 0)^\top & \text{on}\ (0,\infty)\times \Gamma_2.
\end{align*}
Applying Lemma \ref{Prop 3.3}, one can establish that
$$
\|w_2^k\|_{L^2((0, t), H^{3/2}(\Om_1))}\leq C\|\pa_\nu w_3^k\|_{L^2((0, t), L^2(I))}\;\;\;\textup{for all}\;\;\;k\in\N\,,
$$
and so \eqref{EQ7.4} implies that $(w^k_2)_{k\in\N}$ is a bounded sequence in $L^2((0,t), H^{3/2}(\Om_1))$. The  embeddings $H^{3/2}(\Om_1)\overset{c}{\hookrightarrow}H^1(\Om_1)\hookrightarrow H^{-2}(\Om_1)$ and the Aubin--Lions lemma (Lemma~\ref{A.4} in the Appendix) imply the existence of a subsequence $(w^{k_j}_2)_{j\in\N}$ of $(w^k_2)_{k\in\N}$ which is convergent in $L^2((0,t), H^1(\Om_1))$. Taking
the trace on $I$, we obtain convergence in $L^2((0,t),L^2(I))$ for the subsequence $(w^{k_j}_2)_{j\in\N}$ of $(w^{k}_2\big|_{I})_{k\in\N}$.

Note that  $((\mathscr T(t)-\mathscr T_0(t))w^{k_j}_0)_{j\in\N}$ is a subsequence of $(\mathscr T(t)w^k_0-\mathscr T_0(t)w^k_0)_{k\in\N}$. We write $a^{nm}\coloneqq a_n-a_m$ for any sequence $(a_n)_{n\in\N}$. With this notation, we have
\begin{equation}\label{EQ.7}
w^{k_ik_j}(t)-\hat w^{k_ik_j}(t)=\mathscr T(t)(w_0^{k_i}-w_0^{k_j})-\mathscr T_0(t)(w_0^{k_i}-w_0^{k_j}).
\end{equation}
For $i, j\in\N$ and $t\geq0$, we now consider
\begin{equation}\label{EQ.8}
\mathscr E^{ij}(t)\coloneqq \frac{1}{2}\|w^{k_ik_j}(t)-\hat w^{k_ik_j}(t)\|^2_{\HH}.
\end{equation}
By \eqref{integral} and \eqref{EQ7.4}, we compute that
\begin{align}\label{EQ.9}
\mathscr E^{ij}(t)&\leq\be_2|(\pa_\nu\hat w_3^{k_ik_j}, w_2^{k_ik_j})_{L^2((0, t), L^2(I))}|\nonumber\\
&\leq\be_2\|\pa_\nu\hat w_3^{k_ik_j}\|_{L^2((0, t), L^2(I))}\|w_2^{k_ik_j}\|_{L^2((0, t), L^2(I))}\nonumber\\
&\leq C\|w_2^{k_i}-w_2^{k_j}\|_{L^2((0, t), L^2(I))}\to0 \;\;\; (\textup{as } i, j\to\infty).
\end{align}
Thanks to \eqref{EQ.7}-\eqref{EQ.9}, we get that $((\mathscr T(t)-\mathscr T_0(t))w_0^{k_j})_{j\in\N}$ is a Cauchy sequence in $\HH$ and therefore converges in this Hilbert space. Accordingly, $\mathscr T(t)-\mathscr T_0(t)$ is a compact operator from $\HH_0$ to $\HH$ for any $t>0$. Now, Theorem \ref{main tool} leads to the conclusion of the present theorem.
\end{proof}

\section{\textcolor{black}{Polynomial stability without geometric condition}}\label{Polynomial stability}

Here we will see that the system \eqref{ec1}-\eqref{initial2} is polynomially stable without any geometric condition imposed on the boundary of $\Om_2$, when there is no damping on the membrane. In the first subsection, we will show a result which is the key to prove, by contradiction, the polynomial stability of a $C_{0}$-semigroup of contractions. In the second subsection, we will prove the polynomial stability if $m=0$. The main result of this section is the following:

\begin{theorem}\label{Th poly without GC}
Let $m=0$. Then the semigroup
$\left(\mathscr{T}(t)\right)_{t\geq0}$ decays
polynomially. 
\end{theorem}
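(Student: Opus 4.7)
The plan is to apply the Borichev--Tomilov theorem: since $(\mathscr T(t))_{t\ge0}$ is a contraction semigroup on $\HH$ (Theorem~\ref{C_0 semigroup}) and $i\R\subset \rh(\AA)$ (by Lemma~\ref{Prop_resolvent}, announced in the excerpt), polynomial decay of $\mathscr T(\cdot)$ on $D(\AA)$ is equivalent to a resolvent bound
\[ \|(i\la-\AA)^{-1}\|_{\LL(\HH)} = O(|\la|^\al), \quad |\la|\to\infty, \]
for some $\al>0$. I would argue this by contradiction, assuming sequences $(\la_n)\subset \R$ with $|\la_n|\to\infty$ and $w^n=(w_1^n,\ldots,w_5^n)^\top \in D(\AA)$ with $\|w^n\|_\HH=1$ and $f^n\ceqq(i\la_n-\AA)w^n$ satisfying $|\la_n|^\al\|f^n\|_\HH\to 0$. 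Componentwise, $f^n$ yields the relations $i\la_n w_1^n-w_2^n = f_1^n$, $i\la_n w_3^n-w_4^n = f_3^n$, $i\rh_1\la_n w_2^n + \be_1\De^2 w_1^n + \al\De w_5^n = \rh_1 f_2^n$, $i\rh_2\la_n w_4^n-\be_2\De w_3^n = \rh_2 f_4^n$, and $i\rh_0\la_n w_5^n - \al\De w_2^n - \be\De w_5^n + \sig w_5^n = \rh_0 f_5^n$, together with the transmission conditions $w_1^n=w_3^n$ and $w_2^n=w_4^n$ on $I$, the clamping on $\Ga$, the Robin condition on $\pa\Om_1$, and the interface conditions \eqref{condtrans1}--\eqref{ec13}.

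The thermal dissipation from Lemma~\ref{A is dissipative} immediately yields
\[ \|w_5^n\|^2_{H^1(\Om_1)} + \ka\|w_5^n\|^2_{L^2(\pa\Om_1)} \le C\,\Re(f^n,w^n)_\HH = o(|\la_n|^{-\al}), \]
so both the temperature and its relevant traces vanish at a quantitative rate. The next step is to propagate this control to the plate. From the fifth resolvent equation one reads off $\al\De w_2^n = i\rh_0\la_n w_5^n - \be\De w_5^n + \sig w_5^n - \rh_0 f_5^n$, while the relation $w_2^n=w_4^n=i\la_n w_3^n-f_3^n$ on $I$ (together with $w_2^n=0$ on $\Ga$) supplies its Dirichlet trace. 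Testing the plate equation against $\overline{w_1^n}$, integrating by parts twice, and exploiting the interface conditions to substitute $\be_1\BB_1w_1^n = -\al w_5^n$ and $\be_1\BB_2w_1^n = -\al\pa_\nu w_5^n -\be_2\pa_\nu w_3^n$ produces the key identity
\[ \be_1\|w_1^n\|^2_{H^2_\Ga(\Om_1)} - \rh_1|\la_n|^2\|w_1^n\|^2_{L^2(\Om_1)} = \al\bigl(\cdots w_5^n \cdots\bigr) + \be_2\int_I \pa_\nu w_3^n\,\overline{w_1^n}\,dS + (f^n\text{-terms}). \]
Together with $w_2^n=i\la_n w_1^n - f_1^n$, this reduces the control of the full plate energy to trace quantities of the wave component on $I$ plus already-small thermal remainders.

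The main obstacle, and the novelty announced in the introduction, is closing the argument on the membrane without the geometric condition $q\cdot\nu\le 0$. Combining the third and fourth resolvent equations gives the Helmholtz-type equation $\De w_3^n + \tfrac{\rh_2}{\be_2}\la_n^2 w_3^n = -\tfrac{\rh_2}{\be_2}(i\la_n f_3^n+f_4^n)$ in $\Om_2$. I would apply Rellich's identity on $\Om_2$ with a general $C^1$ vector field $h$ (not required to satisfy $h\cdot\nu|_I\le 0$), obtaining an inequality schematically of the form
\[ |\la_n|^2\|w_3^n\|^2_{L^2(\Om_2)} + \|\na w_3^n\|^2_{L^2(\Om_2)^2} \le C\Bigl(\|\pa_\nu w_3^n\|^2_{L^2(I)} + \|\la_n w_3^n\|^2_{L^2(I)} + \|\na_\tau w_3^n\|^2_{L^2(I)} + \text{l.o.t.}\Bigr); \]
every boundary term on $I$ must now be estimated explicitly, as emphasised after \eqref{GC}. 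The traces $w_3^n|_I=w_1^n|_I$ and $\na_\tau w_3^n|_I$ are controlled by $\|w_1^n\|_{H^2(\Om_1)}$; $\la_n w_3^n|_I$ is controlled by $w_2^n|_I$ through $w_2^n=w_4^n=i\la_n w_3^n-f_3^n$; and $\pa_\nu w_3^n|_I$ is controlled through the second interface condition by $\BB_2w_1^n$ and $\pa_\nu w_5^n$. The hard point is that $\BB_2w_1^n$ is a third-order plate quantity which does not fit directly into $\|w_1^n\|_{H^2_\Ga(\Om_1)}$; to buy the missing half-order of regularity I would invoke the parameter-dependent a priori estimate of Lemma~\ref{Prop 3.3} applied to the thermoelastic sub-system on $\Om_1$ with boundary datum $g = -\be_2\pa_\nu w_3^n$, which exchanges regularity for a power of $|\la_n|$. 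This trade-off is precisely what fixes the exponent $\al$ in the resolvent bound. Summing all estimates forces $\|w^n\|_\HH\to 0$, contradicting $\|w^n\|_\HH=1$, and Borichev--Tomilov then delivers the polynomial decay.
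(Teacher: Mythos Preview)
Your overall architecture matches the paper: Borichev--Tomilov contradiction argument, thermal dissipation for $w_5^n$, testing the plate equation against $\overline{w_1^n}$, Rellich's identity on $\Omega_2$ without geometric sign condition, and the explicit estimation of every boundary term on $I$. You also correctly single out $\|\mathscr{B}_2 w_1^n\|_{L^2(I)}$ as the hard third-order trace that must be controlled.

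Where your proposal diverges, and where there is a genuine gap, is the mechanism you propose for that control. You invoke Lemma~\ref{Prop 3.3}, but that lemma is a \emph{time-dependent} a priori estimate (in $L^2((0,T),H^{\mathbf{t}-1/2})$) designed for the Section~4 compactness argument; the paper does not use it at all in the polynomial-stability proof. Applying it to the stationary resolvent problem would require extracting the parameter-dependent estimate hidden in its proof, and even then you face a circularity: the boundary datum you feed in is $g=-\beta_2\partial_\nu w_3^n$, which by the transmission condition equals $\beta_1\mathscr{B}_2 w_1^n-\alpha\kappa w_5^n$, i.e.\ essentially the quantity you are trying to bound. The trade-off ``regularity for a power of $|\lambda_n|$'' does not close because the trace norm of $g$ on $I$ already costs $\|w_1^n\|_{H^{7/2}}$--$\|w_1^n\|_{H^4}$, and the parameter-elliptic estimate returns at best the same order; there is no absorption.

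The paper's route is entirely different and avoids Lemma~\ref{Prop 3.3}. It first uses the static regularity $D(\AA)\hookrightarrow H^4(\Omega_1)\times H^2(\Omega_1)\times\cdots$ (Corollary~\ref{remarkregularity}) to obtain the crude bound $\mu_n^{-1}\|w_1^n\|_{H^4(\Omega_1)}\le C$. Next, from the heat equation \eqref{ast5} one extracts $\alpha\Delta w_2^n$ and hence $\mu_n^{(\varepsilon r-\delta+1)/2}\|\Delta w_1^n\|_{L^2(\Omega_1)}\to 0$. The decisive step is then to multiply the plate equation \eqref{ast2} by $\alpha\overline{w_2^n}$ and integrate by parts twice so that the term $(\Delta w_1^n,\alpha\Delta w_2^n)$ appears, into which one substitutes from the heat equation; after estimating the resulting boundary terms with trace and interpolation inequalities one obtains $\mu_n^{(\varepsilon r-\delta+1)/16}\|w_2^n\|_{L^2(\Omega_1)}\to 0$, hence $\mu_n^{1+(\varepsilon r-\delta+1)/16}\|w_1^n\|_{L^2(\Omega_1)}\to 0$. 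Finally $\|\mathscr{B}_2 w_1^n\|_{L^2(I)}$ is bounded by Sobolev interpolation as $C\|w_1^n\|_{H^4(\Omega_1)}^{7/8}\|w_1^n\|_{L^2(\Omega_1)}^{1/8}$, and the combination of the $H^4$ growth bound with the $L^2$ decay yields smallness provided $r\ge 192$. Your proposal is missing this entire chain (steps (iii)--(v) and the interpolation in (vii)); without it, the argument does not close.
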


\smallskip

\subsection{A characterization of polynomial stability}\quad\\
With respect to polynomial decay, we have the following characterization of the polynomial stability of a bounded $C_0$-semigroup due to Borichev and Tomilov (see \cite[Theorem 2.4]{Borichev2010optimal}).

\begin{theorem} \label{Borichev-Tomilov}
Let $(T(t))_{t\geq0}$ be a bounded $C_0$-semigroup on a Hilbert space $H$ with generator $A$ such that $i\mathbb{R}\cap \sigma(A)$ is empty. Then, for  $r>0$ fixed, the following assertions are equivalent:
\begin{itemize}
\item[i)] There exist $C>0$ and $\lambda_0>0$ such that for all $\lambda\in\mathbb{R}$ with $\vert \lambda\vert > \lambda_0$ and all $f\in H$ it holds
$$ \big\Vert (i\lambda I - A)^{-1}f\big\Vert_H \leq C \vert \lambda\vert^{r}\Vert f\Vert_H.$$
\item[ii)] There exists some $C>0$ such that for all $t>0$ it holds
$$ \big\Vert T(t)A^{-1}\big\Vert_{\mathscr{L}(H)}\leq C t^{-\,\frac{1}{r}}.$$
\end{itemize}
\end{theorem}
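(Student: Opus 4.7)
The plan is to prove the two implications separately. The hypothesis $i\R \cap \sigma(A) = \es$ together with the boundedness of $(T(t))_{t\geq 0}$ (which forces $0 \in \rho(A)$, since $0 \in i\R$) ensures that $R(i\lambda, A) := (i\lambda - A)^{-1}$ is a well-defined bounded operator for every $\lambda \in \R$, so both quantitative statements make sense.

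For $(ii)\Rightarrow(i)$, the easier direction, the starting point is the Laplace representation
\begin{equation*}
  R(i\lambda, A) y = -\int_0^\infty e^{-i\lambda t}\, T(t)\, y\, dt,
\end{equation*}
interpreted first on $\Re\lambda > 0$ and then pushed to the imaginary axis. Applied to $y = A^{-\alpha} x$, the decay hypothesis in (ii) can be upgraded by moment-type interpolation inequalities for fractional powers of $A$ to $\|T(t) A^{-\alpha}\|_{\mathscr{L}(H)} \leq C_\alpha t^{-\alpha/r}$ for $\alpha \in [0,1]$. For $\alpha > r$ the integrand is in $L^1((1,\infty))$, and combined with the boundedness of $T$ near $0$ this yields a uniform bound $\|R(i\lambda, A) A^{-\alpha}\|_{\mathscr{L}(H)} \leq C$. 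Iterating the resolvent identity $i\lambda R(i\lambda,A) = I + A R(i\lambda,A)$ to shift powers of $A$ across then converts this into $\|R(i\lambda, A)\|_{\mathscr{L}(H)} \leq C |\lambda|^r$ for large $|\lambda|$.

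For $(i)\Rightarrow(ii)$, the harder direction, I would use an operator-valued inverse Fourier/Laplace representation. For $x \in D(A)$ one has, after a suitable smoothing,
\begin{equation*}
  T(t) A^{-1} x = \frac{1}{2\pi i} \int_{\Gamma} e^{\lambda t}\, \lambda^{-1}\, R(\lambda, A)\, x\, d\lambda,
\end{equation*}
where $\Gamma$ starts as a vertical line in the right half-plane. The bound in (i) allows a deformation of $\Gamma$ so that it tracks the imaginary axis, staying in $\rho(A)$ at a distance from $i\R$ dictated by the polynomial growth $|\lambda|^r$ of the resolvent at large imaginary parts. Plancherel's theorem in $L^2(\R, H)$, combined with a dyadic decomposition in $\Im \lambda$, turns the resolvent estimate into a bound on the contour integral; an optimisation of the contour's distance to $i\R$ as a function of $t$ then extracts the decay rate $t^{-1/r}$.

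The main obstacle is reaching the sharp exponent $1/r$ in this second direction: a naive majoration of the inverse Laplace integral gives only a slower rate. The delicate step is the quantitative Plancherel / interpolation argument in which the polynomial growth $|\lambda|^r$ of $R(i\lambda, A)$ is paired against the oscillation of $e^{i\lambda t}$ on the critical scale $|\lambda| \sim t^{1/r}$; this is the operator-valued adaptation of the Carleman-transform idea of Borichev--Tomilov and constitutes the real technical core, while the remaining ingredients are standard manipulations in the Hille--Phillips functional calculus.
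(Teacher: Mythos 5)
The paper does not prove this theorem at all: it is quoted verbatim from Borichev--Tomilov \cite[Theorem 2.4]{Borichev2010optimal} and used as a black box, so there is no internal proof to compare against. Judged on its own terms, your proposal is an outline rather than a proof, and both directions have genuine gaps. In the direction $(ii)\Rightarrow(i)$, the route through $\|T(t)A^{-\alpha}\|\leq C_\alpha t^{-\alpha/r}$ with $\alpha>r$ followed by ``iterating the resolvent identity $i\lambda R = I + AR$'' cannot produce the stated exponent: iterating that identity $k$ times shifts an \emph{integer} power $A^{-k}$ across and yields $\|R(i\lambda,A)\|\leq C|\lambda|^{k}$ with $k=\lceil\alpha\rceil>r$, so for non-integer $r$ you only get $|\lambda|^{\lceil r\rceil}$ (or $|\lambda|^{r+\varepsilon}$ if you chase fractional powers, for which the finite iteration does not literally apply). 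The sharp bound $C|\lambda|^{r}$ comes from a different, more elementary argument: for $x\in D(A)$ write $e^{-ist}T(t)x - x=\int_0^t e^{-isu}T(u)(A-is)x\,du$, estimate $\|T(t)x\|\leq Ct^{-1/r}\|Ax\|\leq Ct^{-1/r}\bigl(\|(is-A)x\|+|s|\,\|x\|\bigr)$, and choose $t\sim|s|^{r}$ so that the $|s|\,\|x\|$ term is absorbed. That scaling step is what fixes the exponent, and it is absent from your sketch.

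The direction $(i)\Rightarrow(ii)$ is the actual content of the Borichev--Tomilov theorem, and your proposal does not prove it: you correctly identify that the critical scale is $|\lambda|\sim t^{1/r}$ and that Plancherel in $L^2(\R,H)$ must enter (this is exactly where the Hilbert-space hypothesis is used; in general Banach spaces the same statement is false and one only gets $t^{-1/r}$ up to a logarithm), but the paragraph ends by declaring this ``the real technical core'' without carrying it out. The contour-deformation-plus-optimisation scheme you describe is, on its face, the classical Banach-space argument that produces the lossy rate; removing the loss requires a concrete mechanism -- in Borichev--Tomilov, a representation of $T(t)A^{-k}$ through powers of the resolvent on vertical lines $\Re\lambda=1/t$ combined with the Plancherel identity $\int_0^\infty e^{-2\xi t}\|T(t)x\|^2\,dt = \tfrac{1}{2\pi}\int_\R\|R(\xi+is,A)x\|^2\,ds$ applied to both the semigroup and its adjoint, followed by a moment inequality to pass from $A^{-k}$ back to $A^{-1}$. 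Also note a small slip at the start: $0\in\rho(A)$ follows directly from the hypothesis $i\R\cap\sigma(A)=\emptyset$, not from the boundedness of the semigroup. As it stands the proposal is a reasonable reading plan for the literature, but it does not constitute a proof of either implication with the stated exponents.
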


\begin{remark}\label{Prop sequences PolyStab}
    Let the hypotheses of Theorem \ref{Borichev-Tomilov} be fulfilled. Note that the inequality in the assertion i) of Theorem \ref{Borichev-Tomilov} is equivalent to
    $$ \vert \lambda\vert^{-r}\big\Vert (i\lambda I - A)^{-1}\big\Vert_{\mathscr{L}(H)}\leq C \qquad (\lambda\in\R, \ \vert\lambda\vert>\lambda_0),$$
which in turn is also equivalent to
$$ \limsup\limits_{\vert\lambda\vert\to\infty}\,\vert \lambda\vert^{-r}\big\Vert (i\lambda I - A)^{-1}\big\Vert_{\mathscr{L}(H)} < \infty.$$
Now, suppose that
$$\limsup\limits_{\vert\lambda\vert\to\infty}\vert \lambda\vert^{-r}\big\Vert (i\lambda I - A)^{-1}\big\Vert_{\mathscr{L}(H)}=\infty.$$
In this case, there are sequences $(\lambda_n)_{n\in\mathbb{N}}\subset\mathbb{R}$ and    $(w_n)_{n\in\mathbb{N}}\subset D(A)$, such that  $\vert \lambda_n\vert\to\infty$ as $n\to\infty$,   $\Vert w_n\Vert_H=1$ for all $n\in\mathbb{N}$, and
$$\vert \lambda_n\vert^{r}\big\Vert (i\lambda_n I - A)w_n\big\Vert_H\to 0\quad \text{as}\quad n\to\infty.$$
In fact, for each $n\in\mathbb{N}$ it holds
$$ \sup_{\vert \lambda\vert \geq n} \vert \lambda\vert^{-r}\big\Vert (i\lambda I - A)^{-1}\big\Vert_{\mathscr{L}(H)} > n. $$
Then, for each $n\in\mathbb{N}$, there exists $\lambda_n\in\mathbb{R}$ with $\vert \lambda_n\vert \geq n$ and
$$  \vert \lambda_n\vert^{-r}\big\Vert (i\lambda_n I - A)^{-1}\big\Vert_{\mathscr{L}(H)} > n. $$
Note that $\vert\lambda_n\vert\to \infty$ as $n\to\infty$.  The last inequality implies that for each $n\in\mathbb{N}$ there exists $h_n\in H\smallsetminus\{0\}$ such that
$$ \big\Vert (i\lambda_n I - A)^{-1}h_n\big\Vert_H > n\vert \lambda_n\vert^{r}\Vert h_n\Vert_H.$$
Let $\widetilde{w}_n \ceqq (i\lambda_n I - A)^{-1}h_n $ for $n\in\mathbb{N}$. Then, $\widetilde{w}_n\in D(A)\smallsetminus\{0\}$. Moreover,
$$ \Vert \widetilde{w}_n\Vert_H > n \vert \lambda_n\vert^{r}\big\Vert (i\lambda_n I - A)\widetilde{w}_n\big\Vert_H,$$
which is equivalent to
$$ \frac{1}{n} > \vert \lambda_n\vert^{r}\big\Vert (i\lambda_n I - A)w_n\big\Vert_H, $$
where $w_n\ceqq\dfrac{\widetilde{w}_n}{\Vert \widetilde{w}_n\Vert_H}$ for $n\in\mathbb{N}$. The sequence $(w_n)_{n\in\mathbb{N}}$ satisfies $w_n\in D(A)$ and $\Vert w_n\Vert_H=1$ for all $n\in\mathbb{N}$. Furthermore,
$$\vert \lambda_n\vert^{r}\big\Vert (i\lambda_n I - A)w_n\big\Vert_H\to 0 \quad (n\to\infty).$$
\end{remark}

\smallskip

\subsection{Proof of Theorem \ref{Th poly without GC}} \quad \\
In the following, let $m=0$. In this case, the norm in $\mathscr{H}$ is given by%
\begin{align*}
\left\Vert w\right\Vert _{\mathscr{H}}^{2}& =\beta_{1}\left\Vert w_{1}%
\right\Vert _{H_{\Gamma}^{2}\left(  \Omega_{1}\right)  }^{2}+\rho
_{1}\left\Vert w_{2}\right\Vert _{L^{2}\left(  \Omega_{1}\right)  }^{2}%
+\beta_{2}\left\Vert \nabla w_{3}\right\Vert _{L^{2}\left(  \Omega_{2}\right)
}^{2}+\rho_{2}\left\Vert w_{4}\right\Vert _{L^{2}\left(  \Omega_{2}\right)
}^{2}\\
& \quad +\rho_{0}\left\Vert w_{5}\right\Vert _{L^{2}\left(  \Omega_{1}\right)
}^{2}\text{.}%
\end{align*}


We will first prove the theorem under the additional assumption that $i\R\subset\rho(\AA)$, which
 will then be shown in Lemma~\ref{Prop_resolvent}. 
 
We fix a sufficiently large $r\in \N$ and prove polynomial stability with rate $1/r$ by an indirect proof. In fact, we will obtain $r=192$.  If the semigroup generated by $\mathscr A$ is not polynomially stable with rate $r$, then, under the assumption that $i\R\subset\rho(\AA)$ and
by Remark~\ref{Prop sequences PolyStab},  there are sequences $\left(  \lambda
_{n}\right)  _{n\in\mathbb{N}}\subset\mathbb{R}$ and $\left(  w^{n}\right)
_{n\in\mathbb{N}}\subset D(\mathscr{A})$ such that%
$$
\left\vert \lambda_{n}\right\vert \rightarrow\infty\text{,\quad}\left\Vert
w^{n}\right\Vert _{\mathscr{H}}=1\text{ for all }n\text{,\quad and \quad
}\left\vert \lambda_{n}\right\vert ^{r}\left\Vert \left(  i\lambda
_{n}I-\mathscr{A}\right)  w^{n}\right\Vert _{\mathscr{H}}\rightarrow0\text{.}%
$$
Let%
\begin{equation}
\widetilde{f}^{n}\ceqq\mu_{n}^{r}\left(  i\lambda_{n}I-\mathscr{A}\right)
w^{n}\label{ec resolvent2}%
\end{equation}
with $\mu_{n}\ceqq\left\vert \lambda_{n}\right\vert $. Then we have $\widetilde{f}^n\to 0$ in $\HH$. In the following, we will show  that this leads to the contradiction $\|w^n\|_{\HH}\to 0$, 
by estimating the components of $w^n$. This is done in several steps.

\smallskip
\noindent\textbf{(i)} \textit{Estimate of $w_5^n$: }
By the dissipation \eqref{dissipativity}, 
we have
\begin{align*}
\mu_{n}^{r}\left\Vert w_{5}^{n}\right\Vert _{H^{1}\left(  \Omega_{1}\right)
}^{2}  &  \leq C\mu_{n}^{r}\left(  \sigma\left\Vert w_{5}^{n}\right\Vert
_{L^{2}\left(  \Omega_{1}\right)  }^{2}+\beta\left\Vert \nabla w_{5}%
^{n}\right\Vert _{L^{2}\left(  \Omega_{1}\right)  ^{2}}^{2}+k\beta\left\Vert
w_{5}^{n}\right\Vert _{L^{2}\left(  \partial\Omega_{1}\right)  }^{2}\right) \\
&  = - C\mu_{n}^{r}\operatorname{Re}\left(  \mathscr{A}w^{n},w^{n}\right)
_{\mathscr{H}}\\
&  = C\operatorname{Re}\, (  \mu_{n}^{r}(  i\lambda_{n}I-\mathscr{A})  w^{n},w^{n})_{\mathscr{H}}\\
&  = C\Re\, (\widetilde{f}^{n},w^{n} )_{\mathscr{H}}\\
&  \leq C\Vert \widetilde{f}^{n}\Vert _{\mathscr{H}}\text{.}%
\end{align*}
As $\tilde{f}^n\to 0$, this yields
\begin{equation}
\mu_{n}^{r/2}\left\Vert w_{5}^{n}\right\Vert _{H^{1}\left(  \Omega_{1}\right)
}\leq C \Vert \widetilde{f}^{n}\Vert _{\mathscr{H}}^{1/2}\text{
\quad and so\quad}\mu_{n}^{r/2}\left\Vert w_{5}^{n}\right\Vert _{H^{1}\left(
\Omega_{1}\right)  }\rightarrow0\text{.} \label{estimate dissip}%
\end{equation}

\smallskip
\noindent\textbf{(ii)} \textit{Estimate of $\|w_1^n\|_{L^2(\Omega_1)}$:}
From (\ref{ec resolvent2}), it follows that%
\begin{align}
\mu_{n}^{r}\left(  i\lambda_{n}w_{1}^{n}-w_{2}^{n}\right)   &  =\widetilde{f}%
_{1}^{n} & \!\!\!\!\!\!\!\!\!\!\!\!\rightarrow0\text{ in }H_{\Gamma}^{2}\left(  \Omega_{1}\right)
\text{,}\label{ast1}\\
\mu_{n}^{r}\left(  i\rho_{1}\lambda_{n}w_{2}^{n}+\beta_{1}\Delta^{2}w_{1}%
^{n}+\alpha\Delta w_{5}^{n}\right)   &  =\rho_1 \widetilde{f}_{2}^{n} & \!\!\!\!\!\!\!\!\!\!\!\!\rightarrow
0\text{ in }L^{2}\left(  \Omega_{1}\right)  \text{,}\label{ast2}\\
\mu_{n}^{r}\left(  i\lambda_{n}\nabla w_{3}^{n}-\nabla w_{4}^{n}\right)   &
=\nabla\widetilde{f}_{3}^{n} & \!\!\!\!\!\!\!\!\!\!\!\!\rightarrow0\text{ in }L^{2}\left(  \Omega
_{2}\right)  \text{,}\label{ast3}\\
\mu_{n}^{r}\left(  i\rho_{2}\lambda_{n}w_{4}^{n}-\beta_{2}\Delta w_{3}%
^{n}\right)   &  =\rho_2 \widetilde{f}_{4}^{n} & \!\!\!\!\!\!\!\!\!\!\!\!\rightarrow0\text{ in }L^{2}\left(
\Omega_{2}\right)  \text{,}\label{ast4}\\
\mu_{n}^{r}\left(  i\rho_{0}\lambda_{n}w_{5}^{n}-\alpha\Delta w_{2}^{n}%
-\beta\Delta w_{5}^{n}+\sigma w_{5}^{n}\right)   &  =\rho_0 \widetilde{f}_{5}%
^{n} & \!\!\!\!\!\!\!\!\!\!\!\!\rightarrow0\text{ in }L^{2}\left(  \Omega_{1}\right)  \text{.}
\label{ast5}%
\end{align}
Due to (\ref{ast1}) we have $w_{2}^{n}=i\lambda_{n}w_{1}^{n}-\mu_{n}%
^{-r}\widetilde{f}_{1}^{n}$. Inserting this in (\ref{ast2}), we get%
$$
i\rho_{1}\lambda_{n}\left(  i\lambda_{n}w_{1}^{n}-\mu_{n}^{-r}\widetilde{f}%
_{1}^{n}\right)  +\beta_{1}\Delta^{2}w_{1}^{n}+\alpha\Delta w_{5}^{n}=\rho_1 \mu
_{n}^{-r}\widetilde{f}_{2}^{n}\text{,}%
$$
i.e.,%
\begin{equation}
-\rho_{1}\lambda_{n}^{2}w_{1}^{n}+\beta_{1}\Delta^{2}w_{1}^{n}=\mu_{n}%
^{-r}\widetilde{f}_{2}^{n}+i\rho_{1}\lambda_{n}\mu_{n}^{-r}\widetilde{f}%
_{1}^{n}-\alpha\Delta w_{5}^{n}\text{.} \label{ast6}%
\end{equation}
Now, from $\mu_{n}^{r}\left(  i\lambda_{n}w_{3}^{n}-w_{4}^{n}\right)
=\widetilde{f}_{3}^{n}$, we have that
\begin{equation}
w_{4}^{n}=i\lambda_{n}w_{3}^{n}-\mu_{n}^{-r}\widetilde{f}_{3}^{n}\text{.}
\label{ast3b}%
\end{equation}
Inserting this in (\ref{ast4}), we obtain%
$$
i\rho_{2}\lambda_{n}(  i\lambda_{n}w_{3}^{n}-\mu_{n}^{-r}\widetilde{f}_{3}^{n})  -\beta_{2}\Delta w_{3}^{n}=\mu_{n}^{-r}\widetilde{f}_{4}%
^{n}\text{,}%
$$
i.e.,%
\begin{equation}
-\rho_{2}\lambda_{n}^{2}w_{3}^{n}-\beta_{2}\Delta w_{3}^{n}=\mu_{n}%
^{-r}\widetilde{f}_{4}^{n}+i\rho_{2}\lambda_{n}\mu_{n}^{-r}\widetilde{f}%
_{3}^{n}\text{.} \label{ast7}%
\end{equation}
Multiplying (\ref{ast6}) by $-\overline{w_{1}^{n}}$ in $L^{2}\left(
\Omega_{1}\right)  $, we get%
\begin{multline*}
\rho_{1}\lambda_{n}^{2}\left\Vert w_{1}^{n}\right\Vert _{L^{2}\left(
\Omega_{1}\right)  }^{2}-\beta_{1}\left(  \Delta^{2}w_{1}^{n},w_{1}%
^{n}\right)  _{L^{2}\left(  \Omega_{1}\right)  }\\
=-\mu_{n}^{-r}(\widetilde{f}_{2}^{n},w_{1}^{n})_{L^{2}\left(  \Omega_{1}\right)}
-i\rho_{1}\lambda_{n}\mu_{n}^{-r}(  \widetilde{f}_{1}^{n},w_{1}^{n})_{L^{2}\left(  \Omega_{1}\right)  }+\alpha\left(  \Delta
w_{5}^{n},w_{1}^{n}\right)  _{L^{2}\left(  \Omega_{1}\right)  }\text{.}%
\end{multline*}
Then, by integration by parts and the transmission conditions, it holds 
\begin{align*}
&  \rho_{1}\lambda_{n}^{2}\left\Vert w_{1}^{n}\right\Vert _{L^{2}\left(
\Omega_{1}\right)  }^{2}\\
& \quad -\Big[  \beta_{1}\left\Vert w_{1}^{n}\right\Vert
_{H_{\Gamma}^{2}\left(  \Omega_{1}\right)  }^{2}-\big(  \underbrace{\beta
_{1}\mathscr{B}_{1}w_{1}^{n}}_{=-\alpha w_{5}^{n}},\partial_{\nu}w_{1}%
^{n}\big)_{L^{2}\left(  I\right)  }+\big(  \!\!\!\!\!\!\underbrace{\beta
_{1}\mathscr{B}_{2}w_{1}^{n}}_{=\alpha\kappa w_{5}^{n}-\beta_{2}\partial_{\nu
}w_{3}^{n}} \!\!\!\!\!\!, w_{1}^{n}\big)_{L^{2}\left(  I\right)  } \Big] \\
&  =-\mu_{n}^{-r}(  \widetilde{f}_{2}^{n},w_{1}^{n})_{L^{2}\left(  \Omega_{1}\right)}-i\rho_{1}\lambda_{n}\mu_{n}^{-r}(\widetilde{f}_{1}^{n},w_{1}^{n})_{L^{2}\left(\Omega_{1}\right)}-\alpha\left(  \nabla w_{5}^{n},\nabla w_{1}^{n}\right)_{L^{2}\left(
\Omega_{1}\right)  }\\
& \quad + \alpha (  \underbrace{\partial_{\nu}w_{5}^{n}}_{=-\kappa w_{5}^{n}},w_{1}^{n})_{L^{2}\left(  I\right)  }\text{.}%
\end{align*}
From this, we obtain
\begin{multline}
\beta_{1}\left\Vert w_{1}^{n}\right\Vert _{H_{\Gamma}^{2}\left(  \Omega
_{1}\right)  }^{2}    = \rho_{1}\lambda_{n}^{2}\left\Vert w_{1}^{n}\right\Vert
_{L^{2}\left(  \Omega_{1}\right)  }^{2} - \alpha\left(  w_{5}^{n},\partial_{\nu
}w_{1}^{n}\right)  _{L^{2}\left(  I\right)  }\\ + \beta_{2}\left(  \partial_{\nu
}w_{3}^{n},w_{1}^{n}\right)  _{L^{2}\left(  I\right)  }
+\mu_{n}^{-r}(\widetilde{f}_{2}^{n},w_{1}^{n})  _{L^{2}\left(  \Omega_{1}\right)
}\\
+ i\rho_{1}\lambda_{n}\mu_{n}^{-r}(\widetilde{f}_{1}^{n},w_{1}^{n})_{L^{2}\left(  \Omega_{1}\right)  } + \alpha\left(  \nabla w_{5}^{n},\nabla w_{1}^{n}\right)  _{L^{2}\left(  \Omega_{1}\right)  }\text{.}
\label{ast8}%
\end{multline}
Now, multiplying (\ref{ast7}) by $-\overline{w_{3}^{n}}$ in $L^{2}\left(\Omega_{2}\right)$ and using integration by parts, we get%
\begin{multline*}
\rho_{2}\lambda_{n}^{2}\left\Vert w_{3}^{n}\right\Vert_{L^{2}(\Omega_{2})}^{2}+\beta_{2}\big[  -\left\Vert \nabla w_{3}^{n}\right\Vert_{L^{2}(\Omega_{2})}^{2}-(  \partial_{\nu}w_{3}^{n},\underbrace{w_{3}^{n}}_{=w_{1}^{n}})_{L^{2}\left( I \right)}  \big]\\
  =-\mu_{n}^{-r}(  \widetilde{f}_{4}^{n},w_{3}^{n})_{L^{2}\left(  \Omega
_{2}\right)  }-i\rho_{2}\lambda_{n}\mu_{n}^{-r}(  \widetilde{f}_{3}^{n},w_{3}^{n})_{L^{2}\left(  \Omega_{2}\right)  }\text{.}%
\end{multline*}
Then,%
\begin{multline}
\beta_{2}\left\Vert \nabla w_{3}^{n}\right\Vert _{L^{2}\left(  \Omega
_{2}\right)  }^{2}=\rho_{2}\lambda_{n}^{2}\left\Vert w_{3}^{n}\right\Vert
_{L^{2}\left(  \Omega_{2}\right)  }^{2}-\beta_{2}\left(  \partial_{\nu}w_{3}^{n},w_{1}^{n}\right)_{L^{2}\left( I \right)}\\
+\mu_{n}^{-r}(  \widetilde{f}_{4}^{n},w_{3}^{n})_{L^{2}\left(  \Omega_{2}\right)  }
+i\rho_{2}\lambda_{n}\mu_{n}^{-r}(  \widetilde{f}_{3}^{n},w_{3}^{n})_{L^{2}\left(
\Omega_{2}\right)  }\text{.} \label{ast9}%
\end{multline}
Adding (\ref{ast8}) and (\ref{ast9}), we obtain%
\begin{align}
&  \beta_{1}\left\Vert w_{1}^{n}\right\Vert _{H_{\Gamma}^{2}\left(  \Omega
_{1}\right)  }^{2}+\beta_{2}\left\Vert \nabla w_{3}^{n}\right\Vert
_{L^{2}\left(  \Omega_{2}\right)  }^{2}\nonumber\\
&  =\rho_{1}\lambda_{n}^{2}\left\Vert w_{1}^{n}\right\Vert _{L^{2}\left(
\Omega_{1}\right)  }^{2}-\alpha\left(  w_{5}^{n},\partial_{\nu}w_{1}%
^{n}\right)  _{L^{2}\left(  I\right)  }+\mu_{n}^{-r}(  \widetilde{f}_{2}^{n},w_{1}^{n})  _{L^{2}\left(  \Omega_{1}\right)  } \nonumber\\
& \quad + i\rho_{1}\lambda_{n}\mu_{n}^{-r}(  \widetilde{f}_{1}^{n},w_{1}^{n})
_{L^{2}\left(  \Omega_{1}\right)  }
  + \alpha\left(  \nabla w_{5}^{n},\nabla w_{1}^{n}\right)_{L^{2}\left(\Omega_{1}\right)} +\rho_{2}\lambda_{n}^{2}\left\Vert
w_{3}^{n}\right\Vert _{L^{2}\left(  \Omega_{2}\right)  }^{2} \nonumber\\
& \quad + \mu_{n}^{-r}(  \widetilde{f}_{4}^{n},w_{3}^{n})_{L^{2}\left(  \Omega
_{2}\right)  }+i\rho_{2}\lambda_{n}\mu_{n}^{-r}(  \widetilde{f}_{3}^{n},w_{3}^{n})_{L^{2}\left(  \Omega_{2}\right)  }\text{.}
\label{ast10}%
\end{align}
Note that from (\ref{ast1}) we have $i\lambda_{n}w_{1}^{n}=\mu_{n}%
^{-r}\widetilde{f}_{1}^{n}+w_{2}^{n}$. Then,%
\begin{equation}
\lambda_{n}^{2}\left\Vert w_{1}^{n}\right\Vert _{L^{2}\left(  \Omega
_{1}\right)  }^{2}=\left\Vert i\lambda_{n}w_{1}^{n}\right\Vert _{L^{2}\left(
\Omega_{1}\right)  }^{2}\leq C\Big(  \mu_{n}^{-2r}\Vert \widetilde{f}_{1}^{n}\Vert _{L^{2}\left(  \Omega_{1}\right)  }^{2}+\left\Vert
w_{2}^{n}\right\Vert _{L^{2}\left(  \Omega_{1}\right)  }^{2}\Big)  \text{.}
\label{ast12}%
\end{equation}

\smallskip
\noindent\textbf{(iii)} \textit{Estimate of $\|w_2^n\|_{H^1(\Omega_1)}$:}
From $D(\mathscr{A})\hookrightarrow H^{4}\left(  \Omega_{1}\right)  \times
H^{2}\left(  \Omega_{1}\right)  \times H^{2}\left(  \Omega_{2}\right)  \times
H^{1}\left(  \Omega_{2}\right)  \times H^{2}\left(  \Omega_{1}\right)  $, it
follows that \textcolor{black}{for some $N\in\mathbb{N}$ it holds}%
\begin{align*}
\left\Vert w^{n}\right\Vert _{H^{4}\left(  \Omega_{1}\right)  \times
H^{2}\left(  \Omega_{1}\right)  \times H^{2}\left(  \Omega_{2}\right)  \times
H^{1}\left(  \Omega_{2}\right)  \times H^{2}\left(  \Omega_{1}\right)  }  &
\leq C\left(  \left\Vert w^{n}\right\Vert _{\mathscr{H}}+\left\Vert
\mathscr{A}w^{n}\right\Vert _{\mathscr{H}}\right) \\
&  \leq C\big(  1+\Vert i\lambda_{n}w^{n}-\mu_{n}^{-r}\widetilde{f}^{n}\Vert _{\mathscr{H}}\big) \\
&  \leq C\big(  1+\mu_{n}+\underbrace{\mu_{n}^{-r}\Vert \widetilde{f}^{n}\Vert _{\mathscr{H}}}_{\leq1\text{, for all }n\geq N}\big) \\
&  \leq C\mu_{n}\text{, \ \ for all }n\geq N\text{,}%
\end{align*}
i.e.,%
\begin{equation}
\mu_{n}^{-1}\left\Vert w^{n}\right\Vert _{H^{4}\left(  \Omega_{1}\right)
\times H^{2}\left(  \Omega_{1}\right)  \times H^{2}\left(  \Omega_{2}\right)
\times H^{1}\left(  \Omega_{2}\right)  \times H^{2}\left(  \Omega_{1}\right)
}\leq C\text{, \ \ for all }n\geq N\text{.} \label{ast13}%
\end{equation}
Therefore,  both sequences 
\begin{equation}
\left(  \mu_{n}^{-1}\left\Vert w_{1}^{n}\right\Vert _{H^{4}\left(  \Omega
_{1}\right)  }\right)_{n\in\N}\text{ and }\left(  \mu_{n}^{-1}\left\Vert
w_{2}^{n}\right\Vert _{H^{2}\left(  \Omega_{1}\right)  }\right)_{n\in\N} \label{ast14}%
\end{equation}
are bounded. Now, by Sobolev's interpolation inequality, we get%
$$
\mu_{n}^{-1/2}\left\Vert w_{2}^{n}\right\Vert _{H^{1}\left(  \Omega
_{1}\right)  }\leq C\underbrace{\left\Vert w_{2}^{n}\right\Vert _{L^{2}\left(
\Omega_{1}\right)  }^{1/2}}_{\leq C}\underbrace{\mu_{n}^{-1/2}\left\Vert
w_{2}^{n}\right\Vert _{H^{2}\left(  \Omega_{1}\right)  }^{1/2}}_{\leq C}\leq
C\text{,}%
$$
i.e.,%
\begin{equation}
\mu_{n}^{-1/2}\left\Vert w_{2}^{n}\right\Vert _{H^{1}\left(  \Omega
_{1}\right)  }\leq C\text{ \ \ for all }n\in\mathbb{N}\text{.} \label{ast15}%
\end{equation}

\smallskip
\noindent\textbf{(iv)} \textit{Estimate of $\|\Delta w_1^n\|_{L^2(\Omega_1)}$:}
Note that, due to (\ref{ast5}),%
$$
\mu_{n}^{\varepsilon r-\delta}\left(  i\rho_{0}\lambda_{n}w_{5}^{n}%
-\alpha\Delta w_{2}^{n}-\beta\Delta w_{5}^{n}+\sigma w_{5}^{n}\right)
=\textcolor{black}{\mu_n^{(\varepsilon-1)r-\delta}}\widetilde{f}_{5}^{n}\rightarrow0\text{ in }L^{2}\left(  \Omega_{1}\right)
$$
for any $0\leq\varepsilon,\delta\leq1$. From this and (\ref{estimate dissip})
we deduce%
$$
\mu_{n}^{\varepsilon r-\delta}\left(  \alpha\Delta w_{2}^{n}+\beta\Delta
w_{5}^{n}\right)  \rightarrow0\text{ in }L^{2}\left(  \Omega_{1}\right)
\text{,}%
$$
whenever $\varepsilon r-\delta+1\leq r/2.$ As $\mu_{n}^{\varepsilon r-\delta}%
w_{2}^{n}=i\mu_{n}^{\varepsilon r-\delta}\lambda_{n}w_{1}^{n}-\mu
_{n}^{-\left(  \left(  1-\varepsilon\right)  r+\delta\right)  }\widetilde{f}%
_{1}^{n}$ and $\mu_{n}^{-\left(  \left(  1-\varepsilon\right)  r+\delta
\right)  }\Delta\widetilde{f}_{1}^{n}\rightarrow0$ in $L^{2}\left(  \Omega
_{1}\right)  $, then%
\begin{equation}
i\alpha\mu_{n}^{\varepsilon r-\delta}\lambda_{n}\Delta w_{1}^{n}+\beta\mu
_{n}^{\varepsilon r-\delta}\Delta w_{5}^{n}\rightarrow0\text{ in }L^{2}\left(
\Omega_{1}\right)  \text{,} \label{ast15b}%
\end{equation}
whenever $\varepsilon r-\delta+1\leq r/2$. Now, let $\varepsilon r-\delta+1\leq
r/2$ with $0\leq\varepsilon,\delta\leq1$. From (\ref{ast15b}) and integration
by parts it follows
\begin{multline}\label{ast15c}
 \big(  i\alpha\mu_{n}^{\varepsilon r-\delta}\lambda_{n}\Delta w_{1}%
^{n}+\beta\mu_{n}^{\varepsilon r-\delta}\Delta w_{5}^{n},\Delta w_{1}^{n}\big)_{L^{2}\left(  \Omega_{1}\right)  } = i\alpha\mu_{n}^{\varepsilon r-\delta}\lambda_{n}\left\Vert \Delta
w_{1}^{n}\right\Vert _{L^{2}\left(  \Omega_{1}\right)  }^{2}\\
- \beta\mu_{n}^{\varepsilon r-\delta}\left(  \nabla w_{5}^{n},\nabla\Delta w_{1}%
^{n}\right)  _{L^{2}\left(  \Omega_{1}\right)  ^{2}}+\beta\mu_{n}^{\varepsilon
r-\delta}\left(  \partial_{\nu}w_{5}^{n},\Delta w_{1}^{n}\right)
_{L^{2}\left(  \partial\Omega_{1}\right)  }\text{,} %
\end{multline}
where%
\begin{align}
\big\vert \mu_{n}^{\varepsilon r-\delta}&\left(  \nabla w_{5}^{n}, \nabla\Delta
w_{1}^{n}\right)  _{L^{2}\left(  \Omega_{1}\right)  ^{2}}\big\vert \nonumber\\
& \leq C\mu_{n}^{\varepsilon r-\delta}\left\Vert \nabla w_{5}^{n}\right\Vert
_{L^{2}\left(  \Omega_{1}\right)  ^{2}}\left\Vert w_{1}^{n}\right\Vert
_{H^{4}\left(  \Omega_{1}\right)  }\nonumber\\
&  \leq C\mu_{n}^{\varepsilon r-\delta}\mu_{n}^{-r/2}\left(  \mu_{n}%
^{r/2}\left\Vert \nabla w_{5}^{n}\right\Vert _{L^{2}\left(  \Omega_{1}\right)
^{2}}\right)  \mu_{n}\left(  \mu_{n}^{-1}\left\Vert w_{1}^{n}\right\Vert
_{H^{4}\left(  \Omega_{1}\right)  }\right) \nonumber\\
&  \leq C\mu_{n}^{\varepsilon r-\delta-r/2+1}\left(  \mu_{n}^{r/2}\left\Vert
\nabla w_{5}^{n}\right\Vert _{L^{2}\left(  \Omega_{1}\right)  ^{2}}\right)
\rightarrow0\text{,} \label{ast15d2}%
\end{align}%
\begin{align}
\big\vert \mu_{n}^{\varepsilon r-\delta}&\left(  \partial_{\nu}w_{5}%
^{n},\Delta w_{1}^{n}\right)  _{L^{2}\left(  \partial\Omega_{1}\right)
}\big\vert  \nonumber\\
 & \leq C\mu_{n}^{\varepsilon r-\delta}\left\Vert \partial_{\nu
}w_{5}^{n}\right\Vert _{L^{2}\left(  \partial\Omega_{1}\right)  }\left\Vert
\Delta w_{1}^{n}\right\Vert _{L^{2}\left(  \partial\Omega_{1}\right)
}\nonumber\\
&  \leq C\mu_{n}^{\varepsilon r-\delta}\kappa\left\Vert w_{5}^{n}\right\Vert
_{L^{2}\left(  \partial\Omega_{1}\right)  }\mu_{n}\left(  \mu_{n}%
^{-1}\left\Vert w_{1}^{n}\right\Vert _{H^{4}\left(  \Omega_{1}\right)
}\right) \nonumber\\
&  \leq C\mu_{n}^{\varepsilon r-\delta+1}\left\Vert \nabla w_{5}%
^{n}\right\Vert _{L^{2}\left(  \Omega_{1}\right)  ^{2}}\rightarrow0\text{.}
\label{ast15e}%
\end{align}
So, it follows from (\ref{ast15b})-(\ref{ast15e}) that%
\begin{equation}
\mu_{n}^{\varepsilon r-\delta+1}\left\Vert \Delta w_{1}^{n}\right\Vert
_{L^{2}\left(  \Omega_{1}\right)  }^{2}\rightarrow0\text{,} \label{ast16}%
\end{equation}
whenever $\varepsilon r-\delta+1\leq r/2$ and $0\leq\varepsilon,\delta\leq1$,
and therefore%
\begin{equation}
\mu_{n}^{\frac{\varepsilon r-\delta+1}{2}}\left\Vert \Delta w_{1}%
^{n}\right\Vert _{L^{2}\left(  \Omega_{1}\right)  }\rightarrow0\text{,}
\label{ast16b}%
\end{equation}
whenever $\varepsilon r-\delta+1\leq r/2$ and $0\leq\varepsilon,\delta\leq1$. 

\smallskip
\noindent\textbf{(v)} \textit{Estimate of $\|w_2^n\|_{L^2(\Omega_1)}$:}
Multiplying (\ref{ast2}) by $\alpha\overline{w_{2}^{n}}$ in $L^{2}\left(
\Omega_{1}\right)  $, we get%
\begin{multline*}
\mu_{n}^{r}\big[  i\alpha\rho_{1}\lambda_{n}\left\Vert w_{2}^{n}\right\Vert
_{L^{2}\left(  \Omega_{1}\right)  }^{2}+\alpha\beta_{1}\left(  \Delta^{2}%
w_{1}^{n},w_{2}^{n}\right)  _{L^{2}\left(  \Omega_{1}\right)  }+\alpha
^{2}\left(  \Delta w_{5}^{n},w_{2}^{n}\right)  _{L^{2}\left(  \Omega
_{1}\right)  }\big] \\
 = \alpha(  \widetilde{f}_{2}^{n},w_{2}^{n})_{L^{2}\left(  \Omega_{1}\right)  }\text{.}%
\end{multline*}
Then,%
\begin{multline}
i\alpha\rho_{1}\mu_{n}^{r}\lambda_{n}\left\Vert w_{2}^{n}\right\Vert
_{L^{2}\left(  \Omega_{1}\right)  }^{2}=\alpha\underbrace{(
\widetilde{f}_{2}^{n},w_{2}^{n})_{L^{2}\left(  \Omega_{1}\right)  }%
}_{=:I_{1}}-\beta_{1}\underbrace{\alpha\mu_{n}^{r}\left(  \Delta^{2}w_{1}%
^{n},w_{2}^{n}\right)  _{L^{2}\left(  \Omega_{1}\right)  }}_{=:I_{2}}\\
-\alpha^{2}\underbrace{\mu_{n}^{r}\left(  \Delta w_{5}^{n},w_{2}^{n}\right)
_{L^{2}\left(  \Omega_{1}\right)  }}_{=:I_{3}}\text{.} \label{ast17}%
\end{multline}
\textbf{Estimate for $I_{1}$}:%
\begin{equation}
\left\vert I_{1}\right\vert \leq \Vert \widetilde{f}_{2}^{n}\Vert
_{L^{2}\left(  \Omega_{1}\right)  }\left\Vert w_{2}^{n}\right\Vert
_{L^{2}\left(  \Omega_{1}\right)  }\leq C \Vert \widetilde{f}_{2}^{n}\Vert_{L^{2}\left(  \Omega_{1}\right)  }\rightarrow0\text{.}
\label{ast17b}%
\end{equation}
\textbf{Estimate for $I_{3}$}:%
\begin{align*}
\left\vert I_{3}\right\vert  &  =\mu_{n}^{r}\big\vert -\left(  \nabla
w_{5}^{n},\nabla w_{2}^{n}\right)  _{L^{2}\left(  \Omega_{1}\right)  ^{2}}+\left(  \partial_{\nu}w_{5}^{n},w_{2}^{n}\right)  _{L^{2}\left(
\partial\Omega_{1}\right)  }\big\vert \\
&  =\mu_{n}^{r}\big\vert -\left(  \nabla w_{5}^{n},\nabla w_{2}^{n}\right)
_{L^{2}\left(  \Omega_{1}\right)  ^{2}}-\kappa\left(  w_{5}^{n},w_{2}^{n}\right)_{L^{2}\left(  I\right)  }\big\vert \\
&  \leq\mu_{n}^{r}\left\Vert \nabla w_{5}^{n}\right\Vert _{L^{2}\left(
\Omega_{1}\right)  ^{2}}\left\Vert \nabla w_{2}^{n}\right\Vert _{L^{2}\left(
\Omega_{1}\right)  ^{2}}+\kappa\mu_{n}^{r}\left\Vert w_{5}^{n}\right\Vert
_{L^{2}(I)}\left\Vert w_{2}^{n}\right\Vert _{L^{2}(I)}\\
&  \!\!\!\!\!\!\!\!\!\!\!\!\overset{\text{Trace theorem}}{\leq}C\mu_{n}^{r}\left\Vert w_{5}%
^{n}\right\Vert _{H^{1}(\Omega_{1})}\left\Vert w_{2}^{n}\right\Vert
_{H^{1}(\Omega_{1})}\\
&  \leq C\mu_{n}^{r}\mu_{n}^{-r/2}\left(  \mu_{n}^{r/2}\left\Vert w_{5}%
^{n}\right\Vert _{H^{1}(\Omega_{1})}\right)  \mu_{n}^{1/2}\left(  \mu
_{n}^{-1/2}\left\Vert w_{2}^{n}\right\Vert _{H^{1}(\Omega_{1})}\right) \\
&  \leq C\mu_{n}^{r/2}\mu_{n}^{1/2}\left(  \mu_{n}^{r/2}\left\Vert w_{5}%
^{n}\right\Vert _{H^{1}(\Omega_{1})}\right)  \text{.}%
\end{align*}
Then%
\begin{equation}
\frac{\left\vert I_{3}\right\vert }{\mu_{n}^{r/2}\mu_{n}^{1/2}}\leq C\left(
\mu_{n}^{r/2}\left\Vert w_{5}^{n}\right\Vert _{H^{1}(\Omega_{1})}\right)
\rightarrow0\text{.} \label{ast18}%
\end{equation}
\textbf{Estimate for $I_{2}$}: By integration by parts and the equality
$$
\alpha\mu_{n}^{r}\Delta w_{2}^{n}=i\rho_{0}\mu_{n}^{r}\lambda_{n}w_{5}%
^{n}-\beta\mu_{n}^{r}\Delta w_{5}^{n}+\sigma\mu_{n}^{r}w_{5}^{n}%
-\widetilde{f}_{5}^{n}%
$$
(see (\ref{ast5})) we have%
\begin{align}
I_{2}  &  =\alpha\mu_{n}^{r}\big(  \Delta(  \underbrace{\Delta w_{1}^{n}}_{=:y})  ,w_{2}^{n}\big)_{L^{2}\left(  \Omega_{1}\right)
}\nonumber\\
&  =\alpha\mu_{n}^{r}\left[  \left(  y,\Delta w_{2}^{n}\right)  _{L^{2}\left(
\Omega_{1}\right)  }+\left(  \partial_{\nu}y,w_{2}^{n}\right)  _{L^{2}\left(
\partial\Omega_{1}\right)  }-\left(  y,\partial_{\nu}w_{2}^{n}\right)
_{L^{2}\left(  \partial\Omega_{1}\right)  }\right] \nonumber\\
&  =\left(  \Delta w_{1}^{n},\alpha\mu_{n}^{r}\Delta w_{2}^{n}\right)
_{L^{2}\left(  \Omega_{1}\right)  }+\alpha\mu_{n}^{r}\left(  \partial_{\nu
}\Delta w_{1}^{n},w_{2}^{n}\right)  _{L^{2}\left(  \partial\Omega_{1}\right)
}\nonumber\\
& \qquad -\alpha\mu_{n}^{r} \left(  \Delta w_{1}^{n},\partial_{\nu}w_{2}^{n}\right)
_{L^{2}\left(  \partial\Omega_{1}\right)  }\nonumber\\
&  = (  \Delta w_{1}^{n},i\rho_{0}\mu_{n}^{r}\lambda_{n}w_{5}^{n}-\beta
\mu_{n}^{r}\Delta w_{5}^{n}+\sigma\mu_{n}^{r}w_{5}^{n}-\widetilde{f}_{5}^{n})_{L^{2}\left(  \Omega_{1}\right)  }\nonumber\\
&  \qquad+\alpha\mu_{n}^{r}\left(  \partial_{\nu}\Delta w_{1}^{n},w_{2}%
^{n}\right)  _{L^{2}\left(  \partial\Omega_{1}\right)  }-\alpha\mu_{n}%
^{r}\left(  \Delta w_{1}^{n},\partial_{\nu}w_{2}^{n}\right)  _{L^{2}\left(
\partial\Omega_{1}\right)  }\nonumber\\
&  =-i\rho_{0}\mu_{n}^{r}\lambda_{n}\left(  \Delta w_{1}^{n},w_{5}^{n}\right)
_{L^{2}\left(  \Omega_{1}\right)  }-\beta\mu_{n}^{r}\left(  \Delta w_{1}%
^{n},\Delta w_{5}^{n}\right)  _{L^{2}\left(  \Omega_{1}\right)  }\nonumber\\
& \qquad +\sigma\mu_{n}^{r}\left(  \Delta w_{1}^{n},w_{5}^{n}\right)  _{L^{2}\left(
\Omega_{1}\right)  }- (  \Delta w_{1}^{n},\widetilde{f}_{5}^{n})_{L^{2}\left(  \Omega_{1}\right)  }\nonumber\\
&  \qquad+\alpha\mu_{n}^{r}\left(  \partial_{\nu}\Delta w_{1}^{n},w_{2}%
^{n}\right)  _{L^{2}\left(  \partial\Omega_{1}\right)  }-\alpha\mu_{n}%
^{r}\left(  \Delta w_{1}^{n},\partial_{\nu}w_{2}^{n}\right)  _{L^{2}\left(
\partial\Omega_{1}\right)  }\text{.} \label{ast19}%
\end{align}
Now, we will estimate each term on the right side of the equality
(\ref{ast19}).%
\begin{align*}
\big\vert i\rho_{0}&\mu_{n}^{r}\lambda_{n}\left(  \Delta w_{1}^{n},w_{5}%
^{n}\right)  _{L^{2}\left(  \Omega_{1}\right)  }\big\vert  \\
& \leq C\mu
_{n}^{r}\mu_{n}\left\Vert \Delta w_{1}^{n}\right\Vert _{L^{2}(\Omega_{1}%
)}\left\Vert w_{5}^{n}\right\Vert _{L^{2}(\Omega_{1})}\\
&  \leq C\mu_{n}^{r/2}\mu_{n}\Big(  \mu_{n}^{r/2}\left\Vert w_{5}%
^{n}\right\Vert _{L^{2}(\Omega_{1})}\Big)  \mu_{n}^{-\frac{\varepsilon
r-\delta+1}{2}} \Big(  \mu_{n}^{\frac{\varepsilon r-\delta+1}{2}}\left\Vert
\Delta w_{1}^{n}\right\Vert _{L^{2}(\Omega_{1})}\Big) \\
&  =C\mu_{n}^{\frac{\left(  1-\varepsilon\right)  r+\delta-1}{2}}\mu
_{n}\left(  \mu_{n}^{r/2}\left\Vert w_{5}^{n}\right\Vert _{L^{2}(\Omega_{1}%
)}\right)  \Big(  \mu_{n}^{\frac{\varepsilon r-\delta+1}{2}}\left\Vert \Delta
w_{1}^{n}\right\Vert _{L^{2}(\Omega_{1})}\Big)  \text{.}%
\end{align*}
Then,%
\begin{equation}
\frac{\big\vert i\rho_{0}\mu_{n}^{r}\lambda_{n}\left(  \Delta w_{1}^{n}%
,w_{5}^{n}\right)  _{L^{2}\left(  \Omega_{1}\right)  }\big\vert }{\mu
_{n}^{\frac{\left(  1-\varepsilon\right)  r+\delta-1}{2}}\mu_{n}}%
\rightarrow0\text{,} \label{ast19-1}%
\end{equation}
due to (\ref{estimate dissip}) and(\ref{ast16b}). From Lemma
\ref{boundary}, Trace theorem and Sobolev's interpolation inequality, it
follows%
\begin{align*}
&  \big\vert \beta\mu_{n}^{r}\left(  \Delta w_{1}^{n},\Delta w_{5}^{n}\right)_{L^{2}\left(  \Omega_{1}\right)  }\big\vert \\
&  = \beta\mu_{n}^{r}\big\vert -\left(  \nabla\Delta w_{1}^{n},\nabla
w_{5}^{n}\right)_{L^{2}\left(  \Omega_{1}\right)^{2}} + (  \Delta
w_{1}^{n},\underbrace{\partial_{\nu}w_{5}^{n}}_{=-\kappa w_{5}^{n}})
_{L^{2}\left(  \partial\Omega_{1}\right)  }\big\vert \\
&  \leq C\mu_{n}^{r}\left(  \left\Vert \nabla\Delta w_{1}^{n}\right\Vert
_{L^{2}(\Omega_{1})^{2}}\left\Vert \nabla w_{5}^{n}\right\Vert _{L^{2}%
(\Omega_{1})^{2}}+\left\Vert \Delta w_{1}^{n}\right\Vert _{L^{2}\left(
\partial\Omega_{1}\right)  }\left\Vert w_{5}^{n}\right\Vert _{L^{2}\left(
\partial\Omega_{1}\right)  }\right) \\
&  \leq C\mu_{n}^{r}\left(  \left\Vert w_{1}^{n}\right\Vert _{H^{4}(\Omega
_{1})}\left\Vert w_{5}^{n}\right\Vert _{H^{1}(\Omega_{1})}+\left\Vert \Delta
w_{1}^{n}\right\Vert _{L^{2}\left(  \Omega_{1}\right)  }^{1/2}\left\Vert
\Delta w_{1}^{n}\right\Vert _{H^{1}\left(  \Omega_{1}\right)  }^{1/2}%
\left\Vert w_{5}^{n}\right\Vert _{H^{1}\left(  \Omega_{1}\right)  }\right) \\
&  \leq C\mu_{n}^{r}\Big(  \left\Vert w_{1}^{n}\right\Vert _{H^{4}(\Omega
_{1})}\left\Vert w_{5}^{n}\right\Vert _{H^{1}(\Omega_{1})}\\
& \qquad\qquad +\left\Vert \Delta w_{1}^{n}\right\Vert _{L^{2}\left(  \Omega_{1}\right)  }^{1/2}\left(
\left\Vert \Delta w_{1}^{n}\right\Vert _{L^{2}\left(  \Omega_{1}\right)
}^{1/2}\left\Vert \Delta w_{1}^{n}\right\Vert _{H^{2}\left(  \Omega
_{1}\right)  }^{1/2}\right)  ^{1/2}\left\Vert w_{5}^{n}\right\Vert
_{H^{1}\left(  \Omega_{1}\right)  }\Big) \\
&  \leq C\mu_{n}^{r}\left(  \left\Vert w_{1}^{n}\right\Vert _{H^{4}(\Omega
_{1})}\left\Vert w_{5}^{n}\right\Vert _{H^{1}(\Omega_{1})}+\left\Vert \Delta
w_{1}^{n}\right\Vert _{L^{2}\left(  \Omega_{1}\right)  }^{3/4}\left\Vert
w_{1}^{n}\right\Vert _{H^{4}\left(  \Omega_{1}\right)  }^{1/4}\left\Vert
w_{5}^{n}\right\Vert _{H^{1}\left(  \Omega_{1}\right)  }\right) \\
&  = C\Bigg(  \mu_{n}^{r/2+1}\frac{\left\Vert w_{1}^{n}\right\Vert
_{H^{4}(\Omega_{1})}}{\mu_{n}}  \\
& \qquad  + \mu_{n}^{\frac{r}{2}-\frac{3\left(  \varepsilon
r-\delta+1\right)  }{8}+\frac{1}{4}}\bigg(  \mu_{n}^{\frac{\varepsilon
r-\delta+1}{2}}\left\Vert \Delta w_{1}^{n}\right\Vert _{L^{2}\left(
\Omega_{1}\right)  }\bigg)^{3/4}\bigg(  \frac{\left\Vert w_{1}^{n}\right\Vert _{H^{4}\left(  \Omega_{1}\right)  }}{\mu_{n}}\bigg)^{1/4}\Bigg)\\
& \qquad\qquad \cdot \mu_{n}^{r/2}\left\Vert w_{5}^{n}\right\Vert _{H^{1}(\Omega
_{1})}\\
&  \leq C\mu_{n}^{r/2+1}\Big(  1+\mu_{n}^{-3\left(  \varepsilon
r-\delta+3\right)  /8}\Big(  \mu_{n}^{\frac{\varepsilon r-\delta+1}{2}%
}\left\Vert \Delta w_{1}^{n}\right\Vert _{L^{2}\left(  \Omega_{1}\right)
}\Big)^{3/4}\Big)  \mu_{n}^{r/2}\left\Vert w_{5}^{n}\right\Vert
_{H^{1}(\Omega_{1})}\text{,}%
\end{align*}
\textcolor{black}{where in the last inequality it was used that}  $\left(  \mu_{n}^{-1}\left\Vert w_{1}^{n}\right\Vert _{H^{4}\left(  \Omega_{1}\right)  }\right)_{n\in\N}$ is bounded.
Then,
\begin{equation}
\frac{\left\vert \beta\mu_{n}^{r}\left(  \Delta w_{1}^{n},\Delta w_{5}%
^{n}\right)  _{L^{2}\left(  \Omega_{1}\right)  }\right\vert }{\mu_{n}^{r/2+1}%
}\rightarrow0 \label{ast19-2}%
\end{equation}
due to (\ref{estimate dissip}) and(\ref{ast16b}). Moreover,
$$
\left\vert \sigma\mu_{n}^{r}\left(  \Delta w_{1}^{n},w_{5}^{n}\right)
_{L^{2}\left(  \Omega_{1}\right)  }\right\vert \leq C\mu_{n}^{\frac{(1-\varepsilon)r + \delta -1}{2}}\mu
_{n}^{\frac{\varepsilon r - \delta +1}{2}}\left\Vert \Delta w_{1}^{n}\right\Vert _{L^{2}\left(  \Omega
_{1}\right)  }\mu_{n}^{r/2}\left\Vert w_{5}^{n}\right\Vert _{L^{2}\left(
\Omega_{1}\right)  }\text{.}%
$$
Then,%
\begin{equation}
\frac{\left\vert \sigma\mu_{n}^{r}\left(  \Delta w_{1}^{n},w_{5}^{n}\right)
_{L^{2}\left(  \Omega_{1}\right)  }\right\vert }{\mu_{n}^{\frac{(1-\varepsilon)r + \delta -1}{2}}}\rightarrow
0\text{.} \label{ast19-3}%
\end{equation}
Furthermore, from $\left\Vert \Delta w_{1}^{n}\right\Vert _{L^{2}\left(
\Omega_{1}\right)  }\leq\left\Vert w_{1}^{n}\right\Vert _{H^{2}\left(
\Omega_{1}\right)  }\leq C\left\Vert w^{n}\right\Vert _{\mathscr{H}}\leq C$ it
follows that%
$$
\vert (  \Delta w_{1}^{n},\widetilde{f}_{5}^{n})_{L^{2}\left(  \Omega_{1}\right)  }\vert \leq\left\Vert \Delta w_{1}^{n}\right\Vert _{L^{2}\left(  \Omega_{1}\right)  }\Vert \widetilde{f}_{5}^{n}\Vert _{L^{2}\left(  \Omega_{1}\right)  }\leq C\Vert
\widetilde{f}_{5}^{n}\Vert _{L^{2}\left(  \Omega_{1}\right)
}\rightarrow0\text{,}%
$$
i.e.,%
\begin{equation}
\vert (  \Delta w_{1}^{n},\widetilde{f}_{5}^{n})_{L^{2}\left(  \Omega_{1}\right)  }\vert \rightarrow0\text{.}
\label{ast19-4}%
\end{equation}
Now, From Lemma \ref{boundary}, Corollary \ref{coro-na}, Sobolev's
interpolation inequality, (\ref{ast14}) and (\ref{ast15}) it follows%
\begin{align*}
&  \left\vert \alpha\mu_{n}^{r}\left(  \partial_{\nu}\Delta w_{1}^{n}%
,w_{2}^{n}\right)  _{L^{2}\left(  \partial\Omega_{1}\right)  }\right\vert \\
&  \leq C\mu_{n}^{r}\left\Vert \partial_{\nu}\Delta w_{1}^{n}\right\Vert
_{L^{2}\left(  I\right)  }\left\Vert w_{2}^{n}\right\Vert _{L^{2}\left(
I\right)  }\\
&  \leq C\mu_{n}^{r}\left\Vert \Delta w_{1}^{n}\right\Vert _{H^{1}\left(
\Omega_{1}\right)  }^{1/2}\left\Vert \Delta w_{1}^{n}\right\Vert
_{H^{2}\left(  \Omega_{1}\right)  }^{1/2}\underbrace{\left\Vert w_{2}%
^{n}\right\Vert _{L^{2}\left(  \Omega_{1}\right)  }^{1/2}}_{\leq C}\left\Vert
w_{2}^{n}\right\Vert _{H^{1}\left(  \Omega_{1}\right)  }^{1/2}\\
&  \leq C\mu_{n}^{r}\left(  \left\Vert \Delta w_{1}^{n}\right\Vert
_{L^{2}\left(  \Omega_{1}\right)  }^{1/2}\left\Vert \Delta w_{1}%
^{n}\right\Vert _{H^{2}\left(  \Omega_{1}\right)  }^{1/2}\right)
^{1/2}\left\Vert \Delta w_{1}^{n}\right\Vert _{H^{2}\left(  \Omega_{1}\right)
}^{1/2}\left\Vert w_{2}^{n}\right\Vert _{H^{1}\left(  \Omega_{1}\right)
}^{1/2}\\
&  \leq C\mu_{n}^{r}\left\Vert \Delta w_{1}^{n}\right\Vert _{L^{2}\left(
\Omega_{1}\right)  }^{1/4}\left\Vert w_{1}^{n}\right\Vert _{H^{4}\left(
\Omega_{1}\right)  }^{3/4}\left\Vert w_{2}^{n}\right\Vert _{H^{1}\left(
\Omega_{1}\right)  }^{1/2}\\
&  =C\mu_{n}^{r}\mu_{n}^{-\frac{\varepsilon r-\delta+1}{2}\cdot\frac{1}{4}%
}\left(  \mu_{n}^{\frac{\varepsilon r-\delta+1}{2}}\left\Vert \Delta w_{1}%
^{n}\right\Vert _{L^{2}(\Omega_{1})}\right)  ^{1/4}\mu_{n}^{3/4}\left(
\mu_{n}^{-1}\left\Vert w_{1}^{n}\right\Vert _{H^{4}\left(  \Omega_{1}\right)
}\right)^{3/4}\\
& \qquad \cdot \mu_{n}^{1/4}\left(  \mu_{n}^{-1/2}\left\Vert w_{2}%
^{n}\right\Vert _{H^{1}\left(  \Omega_{1}\right)  }\right)^{1/2}\\
&  \leq C\mu_{n}^{r+1-\frac{\varepsilon r-\delta+1}{8}}\left(  \mu_{n}%
^{\frac{\varepsilon r-\delta+1}{2}}\left\Vert \Delta w_{1}^{n}\right\Vert
_{L^{2}\left(  \Omega_{1}\right)  }\right)  ^{1/4}\text{.}%
\end{align*}
Then,%
\begin{equation}
\frac{\left\vert \alpha\mu_{n}^{r}\left(  \partial_{\nu}\Delta w_{1}^{n}%
,w_{2}^{n}\right)  _{L^{2}\left(  \partial\Omega_{1}\right)  }\right\vert
}{\mu_{n}^{r+1-\frac{\varepsilon r-\delta+1}{8}}}\rightarrow0\text{,}
\label{ast19-5}%
\end{equation}
due to (\ref{ast16b}). Again, from Lemma \ref{boundary}, Corollary
\ref{coro-na}, Sobolev's interpolation inequality, (\ref{ast14}) and
(\ref{ast15}) it follows%
\begin{align*}
&  \left\vert \alpha\mu_{n}^{r}\left(  \Delta w_{1}^{n},\partial_{\nu}%
w_{2}^{n}\right)  _{L^{2}\left(  \partial\Omega_{1}\right)  }\right\vert \\
&  \leq C\mu_{n}^{r}\left\Vert \Delta w_{1}^{n}\right\Vert _{L^{2}\left(
I\right)  }\left\Vert \partial_{\nu}w_{2}^{n}\right\Vert _{L^{2}\left(
I\right)  }\\
&  \leq C\mu_{n}^{r}\left\Vert \Delta w_{1}^{n}\right\Vert _{L^{2}\left(
\Omega_{1}\right)  }^{1/2}\left\Vert \Delta w_{1}^{n}\right\Vert
_{H^{1}\left(  \Omega_{1}\right)  }^{1/2}\left\Vert w_{2}^{n}\right\Vert
_{H^{1}\left(  \Omega_{1}\right)  }^{1/2}\left\Vert w_{2}^{n}\right\Vert
_{H^{2}\left(  \Omega_{1}\right)  }^{1/2}\\
&  \leq C\mu_{n}^{r}\left\Vert \Delta w_{1}^{n}\right\Vert _{L^{2}\left(
\Omega_{1}\right)  }^{1/2}\left(  \left\Vert \Delta w_{1}^{n}\right\Vert
_{L^{2}\left(  \Omega_{1}\right)  }^{1/2}\left\Vert \Delta w_{1}%
^{n}\right\Vert _{H^{2}\left(  \Omega_{1}\right)  }^{1/2}\right)
^{1/2}\left\Vert w_{2}^{n}\right\Vert _{H^{1}\left(  \Omega_{1}\right)
}^{1/2}\left\Vert w_{2}^{n}\right\Vert _{H^{2}\left(  \Omega_{1}\right)
}^{1/2}\\
&  \leq C\mu_{n}^{r}\left\Vert \Delta w_{1}^{n}\right\Vert _{L^{2}\left(
\Omega_{1}\right)  }^{3/4}\left\Vert w_{1}^{n}\right\Vert _{H^{4}\left(
\Omega_{1}\right)  }^{1/4}\left\Vert w_{2}^{n}\right\Vert _{H^{1}\left(
\Omega_{1}\right)  }^{1/2}\left\Vert w_{2}^{n}\right\Vert _{H^{2}\left(
\Omega_{1}\right)  }^{1/2}\\
&  =C\mu_{n}^{r}\mu_{n}^{-\frac{\varepsilon r-\delta+1}{2}\cdot\frac{3}{4}%
}\left(  \mu_{n}^{\frac{\varepsilon r-\delta+1}{2}}\left\Vert \Delta w_{1}%
^{n}\right\Vert _{L^{2}\left(  \Omega_{1}\right)  }\right)  ^{3/4}\mu
_{n}^{1/4}\left(  \mu_{n}^{-1}\left\Vert w_{1}^{n}\right\Vert _{H^{4}\left(
\Omega_{1}\right)  }\right)^{1/4} \\
& \qquad \cdot \mu_{n}^{1/4}\left(  \mu_{n}^{-1/2}%
\left\Vert w_{2}^{n}\right\Vert _{H^{1}\left(  \Omega_{1}\right)  }\right)
^{1/2}\mu_{n}^{1/2}\left(  \mu_{n}^{-1}\left\Vert w_{2}^{n}\right\Vert
_{H^{2}\left(  \Omega_{1}\right)  }\right)^{1/2}\\
&  \leq C\mu_{n}^{r+1-\frac{3\left(  \varepsilon r-\delta+1\right)  }{8}%
}\left(  \mu_{n}^{\frac{\varepsilon r-\delta+1}{2}}\left\Vert \Delta w_{1}%
^{n}\right\Vert _{L^{2}\left(  \Omega_{1}\right)  }\right)  ^{3/4}\text{.}%
\end{align*}
Then,%
\begin{equation}
\frac{\left\vert \alpha\mu_{n}^{r}\left(  \Delta w_{1}^{n},\partial_{\nu}%
w_{2}^{n}\right)  _{L^{2}\left(  \partial\Omega_{1}\right)  }\right\vert }%
{\mu_{n}^{r+1-\frac{3\left(  \varepsilon r-\delta+1\right)  }{8}}}%
\rightarrow0\text{,} \label{ast19-6}%
\end{equation}
due to (\ref{ast16b}).

Note that the largest denominator between (\ref{ast18}), (\ref{ast19-1}),
(\ref{ast19-2}), (\ref{ast19-3}), (\ref{ast19-5}), and (\ref{ast19-6}) is
$\mu_{n}^{r+1-\frac{\varepsilon r-\delta+1}{8}}$. Therefore, it follows from
(\ref{ast17})-(\ref{ast19-6}) that%
$$
\mu_{n}^{\frac{\varepsilon r-\delta+1}{8}}\left\Vert w_{2}^{n}\right\Vert
_{L^{2}\left(  \Omega_{1}\right)  }^{2}=\frac{\alpha\rho_{1}\mu_{n}^{r}\mu
_{n}\left\Vert w_{2}^{n}\right\Vert _{L^{2}\left(  \Omega_{1}\right)  }^{2}%
}{\alpha\rho_{1}\mu_{n}^{r+1-\frac{\varepsilon r-\delta+1}{8}}}\leq C\left(
\frac{\left\vert I_{1}\right\vert +\left\vert I_{2}\right\vert +\left\vert
I_{3}\right\vert }{\mu_{n}^{r+1-\frac{\varepsilon r-\delta+1}{8}}}\right)
\rightarrow0\text{,}%
$$
whenever $\varepsilon r-\delta+1\leq r/2$ with $0\leq\varepsilon,\delta\leq1$,
i.e.,%
\begin{equation}
\mu_{n}^{\frac{\varepsilon r-\delta+1}{16}}\left\Vert w_{2}^{n}\right\Vert
_{L^{2}\left(  \Omega_{1}\right)  }\rightarrow0\text{,} \label{ast20}%
\end{equation}
whenever $\varepsilon r-\delta+1\leq r/2$ with $0\leq\varepsilon,\delta\leq1$.

\smallskip
\noindent\textbf{(vi)} \textit{Estimate of $\|w_1^n\|_{L^2(I)}$:}
Furthermore, from (\ref{ast12}) and (\ref{ast20}) we obtain%
$$
\mu_{n}^{\frac{\varepsilon r-\delta+1}{8}}\lambda_{n}^{2}\left\Vert w_{1}%
^{n}\right\Vert _{L^{2}\left(  \Omega_{1}\right)  }^{2}\leq\left(  \mu
_{n}^{-2r+\frac{\varepsilon r-\delta+1}{8}}\left\Vert \widetilde{f}_{1}%
^{n}\right\Vert _{L^{2}\left(  \Omega_{1}\right)  }^{2}+\mu_{n}^{\frac
{\varepsilon r-\delta+1}{8}}\left\Vert w_{2}^{n}\right\Vert _{L^{2}\left(
\Omega_{1}\right)  }^{2}\right)  \rightarrow0\text{,}%
$$
i.e.,%
\begin{equation}
\mu_{n}^{\frac{\varepsilon r-\delta+1}{16}}\mu_{n}\left\Vert w_{1}%
^{n}\right\Vert _{L^{2}\left(  \Omega_{1}\right)  }\rightarrow0\text{,}
\label{ast21}%
\end{equation}
whenever $\varepsilon r-\delta+1\leq r/2$ with $0\leq\varepsilon,\delta\leq1$.\\

\textcolor{black}{Note that if $r\geq 2$, $\delta=0$, and $\varepsilon=\frac{1}{2}-\frac{1}{r}\in [0,\frac{1}{2})$, we have $\varepsilon r - \delta + 1 = \frac{r}{2}$ and from \eqref{ast21} it follows
\begin{equation}\label{ast21d}
\Vert w_1^n\Vert_{L^2(\Omega_1)} \to 0.
\end{equation}
}

On the other hand, we obtain by Sobolev's interpolation inequality and the boundedness
of $\left(  \left\Vert w_{1}^{n}\right\Vert _{H^{2}\left(  \Omega_{1}\right)
}\right)_{n\in\N}  $ that%
\begin{equation}
\left\Vert w_{1}^{n}\right\Vert _{H^{1}\left(  \Omega_{1}\right)  }\leq
C\left\Vert w_{1}^{n}\right\Vert _{L^{2}\left(  \Omega_{1}\right)  }%
^{1/2}\left\Vert w_{1}^{n}\right\Vert _{H^{2}\left(  \Omega_{1}\right)
}^{1/2}\leq C\left\Vert w_{1}^{n}\right\Vert _{L^{2}\left(  \Omega_{1}\right)
}^{1/2}\text{.} \label{ast21a}%
\end{equation}
From this and Lemma \ref{boundary} it follows
\begin{multline*}
\left\Vert w_{1}^{n}\right\Vert _{L^{2}\left(  I\right)  }\leq C\left\Vert
w_{1}^{n}\right\Vert _{L^{2}\left(  \Omega_{1}\right)  }^{1/2}\left\Vert
w_{1}^{n}\right\Vert _{H^{1}\left(  \Omega_{1}\right)  }^{1/2}\leq C\left\Vert
w_{1}^{n}\right\Vert _{L^{2}\left(  \Omega_{1}\right)  }^{1/2}\left\Vert
w_{1}^{n}\right\Vert _{L^{2}\left(  \Omega_{1}\right)  }^{1/4}\\
= C\left\Vert w_{1}^{n}\right\Vert _{L^{2}\left(  \Omega_{1}\right)  }^{3/4}\text{,}%
\end{multline*}
i.e.,%
\begin{equation}
\left\Vert w_{1}^{n}\right\Vert _{L^{2}\left(  I\right)  }\leq C\left\Vert
w_{1}^{n}\right\Vert _{L^{2}\left(  \Omega_{1}\right)  }^{3/4}\text{.}
\label{ast21b}%
\end{equation}
Then, due to (\ref{ast21b}) and (\ref{ast21}),
$$
\mu_{n}\left\Vert w_{1}^{n}\right\Vert _{L^{2}\left(  I\right)  }\leq C\mu
_{n}\left(  \mu_{n}^{\frac{\varepsilon r-\delta+1}{16}}\mu_{n}\right)
^{-3/4}\left(  \mu_{n}^{\frac{\varepsilon r-\delta+1}{16}}\mu_{n}\left\Vert
w_{1}^{n}\right\Vert _{L^{2}\left(  \Omega_{1}\right)  }\right)
^{3/4}\rightarrow0\text{,}%
$$
if $\frac{1}{4}-\frac{\varepsilon r-\delta+1}{16}\cdot\frac{3}{4}\leq0$,
i.e.,
\begin{equation}
\mu_{n}\left\Vert w_{1}^{n}\right\Vert _{L^{2}\left(  I\right)  }%
\rightarrow0\text{,} \label{ast21c}%
\end{equation}
if $16/3\leq\varepsilon r-\delta+1\leq r/2$ with $0\leq\varepsilon,\delta
\leq1$. In the following, let $16/3\leq\varepsilon r-\delta+1\leq r/2$ with
$0\leq\varepsilon,\delta\leq1$.\\

\smallskip
\noindent\textbf{(vii)} \textit{Estimate of $\| w_{3}^{n}\|_{L^{2}(  \Omega_{2})}$:}
Now we will estimate $\lambda_{n}^{2}\left\Vert w_{3}^{n}\right\Vert
_{L^{2}\left(  \Omega_{2}\right)  }^{2}$.

Inserting $i\lambda_{n}w_{3}^{n}-w_{4}^{n}=\mu_{n}^{-r}\widetilde{f}_{3}^{n}$
in (\ref{ast4}), we have%
$$
i\rho_{2}\lambda_{n}\left(  i\lambda_{n}w_{3}^{n}-\mu_{n}^{-r}\widetilde{f}%
_{3}^{n}\right)  -\beta_{2}\Delta w_{3}^{n}=\rho_{2} \mu_{n}^{-r}\widetilde{f}_{4}%
^{n}\text{.}%
$$
Then,%
\begin{equation}
-\rho_{2}\lambda_{n}^{2}w_{3}^{n}-\beta_{2}\Delta w_{3}^{n}=\rho_{2} \mu_{n}%
^{-r}\widetilde{f}_{4}^{n}+i\rho_{2}\lambda_{n}\mu_{n}^{-r}\widetilde{f}%
_{3}^{n}\text{.} \label{ast23}%
\end{equation}
We multiply (\ref{ast23}) by $q\cdot\nabla\overline{w_{3}^{n}}$ and integrate
over $\Omega_{2}$ in order to obtain that%
\begin{multline}
-\rho_{2}\lambda_{n}^{2}\int_{\Omega_{2}}w_{3}^{n}q\cdot\nabla\overline
{w_{3}^{n}}\,dx-\beta_{2}\int_{\Omega_{2}}\Delta w_{3}^{n}q\cdot\nabla
\overline{w_{3}^{n}}\,dx\\
=\mu_{n}^{-r}\int_{\Omega_{2}} \rho_{2} \left(  i
\lambda_{n}\widetilde{f}_{3}^{n}+\widetilde{f}_{4}^{n}\right)  q\cdot
\nabla\overline{w_{3}^{n}}\,dx\text{.} \label{ast24}%
\end{multline}
Using Rellich's identity,%
$$
\operatorname{Re}\int_{\Omega_{2}}\Delta w_{3}^{n}\left(  q\cdot
\nabla\overline{w_{3}^{n}}\right)\,dx = -\operatorname{Re}\int_{I}\partial_{\nu
}w_{3}^{n}\left(  q\cdot\nabla\overline{w_{3}^{n}}\right)  dS+\frac{1}{2}%
\int_{I}\left(  q\cdot\nu\right)  \left\vert \nabla w_{3}^{n}\right\vert ^{2} dS
$$
in the real part of (\ref{ast24}), we get%
\begin{multline*}
 -\rho_{2}\lambda_{n}^{2}\operatorname{Re}\int_{\Omega_{2}}w_{3}^{n} q\cdot\nabla\overline{w_{3}^{n}}\,dx \\
-\beta_{2}\left[  -\operatorname{Re}\int_{I}\partial_{\nu}w_{3}^{n}\left(  q\cdot\nabla\overline{w_{3}^{n}}\right)
dS + \frac{1}{2}\int_{I}\left(  q\cdot\nu\right)  \left\vert \nabla w_{3}^{n}\right\vert ^{2} dS\right] \\
  =\underbrace{\mu_{n}^{-r}\operatorname{Re}\int_{\Omega_{2}} \rho_{2}\left(
i\lambda_{n}\widetilde{f}_{3}^{n}+\widetilde{f}_{4}^{n}\right)
q\cdot\nabla\overline{w_{3}^{n}}\,dx}_{=:D_{n}}\text{,}%
\end{multline*}
or equivalently%
\begin{multline}
-\rho_{2}\lambda_{n}^{2}\operatorname{Re}\int_{\Omega_{2}}w_{3}^{n}%
q\cdot\nabla\overline{w_{3}^{n}}dx=D_{n}-\beta_{2}\operatorname{Re}\int%
_{I}\partial_{\nu}w_{3}^{n}\left(  q\cdot\nabla\overline{w_{3}^{n}}\right)
dS\\
+ \frac{1}{2}\beta_{2}\int_{I}\left(  q\cdot\nu\right)  \left\vert \nabla
w_{3}^{n}\right\vert ^{2} dS\text{.} \label{ast25a}%
\end{multline}
Making use of the identity $q\cdot\nabla\overline{w_{3}^{n}}%
=\operatorname{div}\left(  q\overline{w_{3}^{n}}\right)  -2\overline{w_{3}%
^{n}}$ and employing integration by parts, it holds%
\begin{equation}
\operatorname{Re}\int_{\Omega_{2}}w_{3}^{n}\left(  q\cdot\nabla\overline
{w_{3}^{n}}\right)  dx=-\left\Vert w_{3}^{n}\right\Vert _{L^{2}\left(
\Omega_{2}\right)  }^{2}-\frac{1}{2}\int_{I}\left(  q\cdot\nu\right)
\left\vert w_{3}^{n}\right\vert dS\text{.} \label{ast26}%
\end{equation}
By (\ref{ast25a}) and (\ref{ast26}), we get%
\begin{multline}\label{Eq_ast26b}
\rho_{2}\lambda_{n}^{2}\left\Vert w_{3}^{n}\right\Vert _{L^{2}\left(
\Omega_{2}\right)  }^{2}=D_{n}-\frac{1}{2}\rho_{2}\lambda_{n}^{2}\int%
_{I}\left(  q\cdot\nu\right)  \left\vert w_{3}^{n}\right\vert dS\\
-\beta_{2}\operatorname{Re}\int_{I}\partial_{\nu}w_{3}^{n}\left(  q\cdot
\nabla\overline{w_{3}^{n}}\right)  dS
+\frac{1}{2}\beta_{2}\int_{I}\left(
q\cdot\nu\right)  \left\vert \nabla w_{3}^{n}\right\vert ^{2} dS\text{.}%
\end{multline}
Note that%
$$
\left\vert \int_{I}\partial_{\nu}w_{3}^{n}\left(  q\cdot\nabla\overline
{w_{3}^{n}}\right)  dS\right\vert \leq\left\Vert \nu\cdot\nabla w_{3}%
^{n}\right\Vert _{L^{2}\left(  I\right)  }\left\Vert q\cdot\nabla
\overline{w_{3}^{n}}\right\Vert _{L^{2}\left(  I\right)  }\leq C\left\Vert
\nabla w_{3}^{n}\right\Vert _{L^{2}\left(  I\right)  ^{2}}^{2}%
$$
and%
$$
\left\vert \int_{I}\left(  q\cdot\nu\right)  \left\vert w_{3}^{n}\right\vert
dS\right\vert \leq C\left\Vert \nabla w_{3}^{n}\right\Vert _{L^{2}\left(
I\right)  ^{2}}^{2}\text{.}%
$$
Therefore%
\begin{align*}
\rho_{2}\lambda_{n}^{2}\left\Vert w_{3}^{n}\right\Vert _{L^{2}\left(
\Omega_{2}\right)  }^{2}  &  \leq C\bigg(  \mu_{n}^{-r}\big\Vert i\rho
_{2}\lambda_{n}\widetilde{f}_{3}^{n}+\widetilde{f}_{4}^{n}\big\Vert_{L^{2}\left(  \Omega_{2}\right)  }\underbrace{\left\Vert \nabla w_{3}^{n}\right\Vert _{L^{2}\left(  \Omega_{2}\right)^{2}}}_{\leq C} \\
&  \qquad\qquad\qquad\qquad\quad + \ \lambda_{n}^{2}\int_{I}\left\vert w_{3}^{n}\right\vert dS+\left\Vert \nabla w_{3}^{n}\right\Vert _{L^{2}\left(  I\right)  ^{2}}^{2}\bigg) \\
&  \leq C\left(  \mu_{n}^{-r}\mu_{n}\Vert \widetilde{f}^{n}\Vert
_{\mathscr{H}}+\lambda_{n}^{2}\left\Vert w_{1}^{n}\right\Vert _{L^{2}\left(
I\right)  }^{2}+\left\Vert \nabla w_{3}^{n}\right\Vert _{L^{2}\left(
I\right)  ^{2}}^{2}\right)  \text{,}%
\end{align*}
i.e.,%
\begin{equation}
\rho_{2}\lambda_{n}^{2}\left\Vert w_{3}^{n}\right\Vert _{L^{2}\left(
\Omega_{2}\right)  }^{2}\leq C\left(  \mu_{n}^{-r+1}\Vert \widetilde{f}^{n}\Vert _{\mathscr{H}}+\lambda_{n}^{2}\left\Vert w_{1}^{n}\right\Vert
_{L^{2}\left(  I\right)  }^{2}+\left\Vert \nabla w_{3}^{n}\right\Vert
_{L^{2}\left(  I\right)  ^{2}}^{2}\right)  \text{,} \label{ast27}%
\end{equation}
where $\mu_{n}^{-r+1}\leq C$ due to \color{black} $r\geq32/3$\color{black}.

Note that $w_{3}^{n}=w_{1}^{n}$ on $I$ implies $\partial_{\tau}w_{3}%
^{n}=\partial_{\tau}w_{1}^{n}$ on $I$, and thus%
\begin{align}
\left\Vert \nabla w_{3}^{n}\right\Vert _{L^{2}\left(  I\right)  ^{2}}^{2}  &
=\left\Vert \left(  \partial_{\tau}w_{3}^{n}\right)  \tau+\left(
\partial_{\nu}w_{3}^{n}\right)  \nu\right\Vert _{L^{2}\left(  I\right)  ^{2}%
}^{2}\nonumber\\
&  =\left\Vert \left(  \partial_{\tau}w_{1}^{n}\right)  \tau+\left(
\partial_{\nu}w_{3}^{n}\right)  \nu\right\Vert _{L^{2}\left(  I\right)  ^{2}%
}^{2}\nonumber\\
&  \leq C\left(  \left\Vert \partial_{\tau}w_{1}^{n}\right\Vert _{L^{2}\left(
I\right)  }^{2}+\left\Vert \partial_{\nu}w_{3}^{n}\right\Vert _{L^{2}\left(
I\right)  }^{2}\right)  \text{,} \label{ast27b}%
\end{align}
where%
\begin{align}
\left\Vert \partial_{\tau}w_{1}^{n}\right\Vert _{L^{2}\left(  I\right)  }^{2}
&  \leq C\left(  \left\Vert w_{1}^{n}\right\Vert _{H^{1}\left(  \Omega
_{1}\right)  }^{1/2}\left\Vert w_{1}^{n}\right\Vert _{H^{2}\left(  \Omega
_{1}\right)  }^{1/2}\right)  ^{2}\nonumber\\
&  \leq C\left\Vert w_{1}^{n}\right\Vert _{H^{1}\left(  \Omega_{1}\right)
}\nonumber\\
&  \leq C\left\Vert w_{1}^{n}\right\Vert _{L^{2}\left(  \Omega_{1}\right)
}^{1/2} \label{ast27c}%
\end{align}
due to Corollary \ref{coro-na} and (\ref{ast21a}).

As $w^{n}\in D(\mathscr{A})$, the following transmission condition in
the sense of the trace holds: 
$$
\beta_{2}\partial_{\nu}w_{3}^{n}=-\beta_{1}\mathscr{B}_{2}w_{1}^{n}%
+\alpha\kappa w_{5}^{n}\text{ \ \ on }I\text{.}%
$$
From this, from (\ref{ast27b}) and\ (\ref{ast27c}), we have%
\begin{equation}
\left\Vert \nabla w_{3}^{n}\right\Vert _{L^{2}\left(  I\right)  ^{2}}^{2}\leq
C\left(  \left\Vert w_{1}^{n}\right\Vert _{L^{2}\left(  \Omega_{1}\right)
}^{1/2}+\left\Vert \mathscr{B}_{2}w_{1}^{n}\right\Vert _{L^{2}\left(
I\right)  }^{2}+\left\Vert w_{5}^{n}\right\Vert _{L^{2}\left(  I\right)  }%
^{2}\right)  \text{.} \label{ast28}%
\end{equation}
By (3C.53) in \cite{LasieckaTriggiani2000Control}, we can write $B_{2}%
w_{1}^{n}=\partial_{\nu}\partial_{\tau}w_{1}^{n}$. Applying Corollary
\ref{coro-na} to $\Delta w_{1}^{n}$ and Lemma \ref{boundary} to
the partial derivative of third order of $w_{1}^{n}$ and using Sobolev's
interpolation inequality, we deduce that%
\begin{align*}
\left\Vert \mathscr{B}_{2}w_{1}^{n}\right\Vert _{L^{2}\left(  I\right)  }  &
=\left\Vert \partial_{\nu}\Delta w_{1}^{n}+\left(  1-\mu\right)
\partial_{\tau}\partial_{\nu}\partial_{\tau}w_{1}^{n}\right\Vert
_{L^{2}\left(  I\right)  }\\
&  \leq C\left(  \left\Vert \partial_{\nu}\Delta w_{1}^{n}\right\Vert
_{L^{2}\left(  I\right)  }+\left\Vert \partial_{\tau}\partial_{\nu}%
\partial_{\tau}w_{1}^{n}\right\Vert _{L^{2}\left(  I\right)  }\right) \\
&  \leq C\bigg(  \underbrace{\left\Vert \Delta w_{1}^{n}\right\Vert
_{H^{2}\left(  \Omega_{1}\right)  }^{1/2}}_{\leq\left\Vert w_{1}^{n}\right\Vert _{H^{4}\left(  \Omega_{1}\right)  }^{1/2}}\underbrace{\left\Vert \Delta w_{1}^{n}\right\Vert _{H^{1}\left(  \Omega_{1}\right)  }^{1/2}}_{\leq\left\Vert w_{1}^{n}\right\Vert _{H^{3}\left(
\Omega_{1}\right)  }^{1/2}}+\left\Vert w_{1}^{n}\right\Vert _{H^{4}\left(
\Omega_{1}\right)  }^{1/2}\left\Vert w_{1}^{n}\right\Vert _{H^{3}\left(
\Omega_{1}\right)  }^{1/2}\bigg) \\
&  \leq C\left\Vert w_{1}^{n}\right\Vert _{H^{4}\left(  \Omega_{1}\right)
}^{1/2}\left\Vert w_{1}^{n}\right\Vert _{H^{3}\left(  \Omega_{1}\right)
}^{1/2}\\
&  \leq C\left\Vert w_{1}^{n}\right\Vert _{H^{4}\left(  \Omega_{1}\right)
}^{1/2}\left(  \left\Vert w_{1}^{n}\right\Vert _{H^{4}\left(  \Omega
_{1}\right)  }^{3/4}\left\Vert w_{1}^{n}\right\Vert _{L^{2}\left(  \Omega
_{1}\right)  }^{1/4}\right)  ^{1/2}\\
&  \leq C\left\Vert w_{1}^{n}\right\Vert _{H^{4}\left(  \Omega_{1}\right)
}^{7/8}\left\Vert w_{1}^{n}\right\Vert _{L^{2}\left(  \Omega_{1}\right)
}^{1/8}\\
&  =C\mu_{n}^{7/8}\Big(  \underbrace{\mu_{n}^{-1}\left\Vert w_{1}^{n}\right\Vert _{H^{4}\left(  \Omega_{1}\right)  }}_{\leq C}\Big)^{7/8}\mu_{n}^{-\frac{\varepsilon r-\delta+1}{16}\cdot\frac{1}{8}}\\
& \qquad \qquad\qquad\qquad\qquad\quad \cdot \mu_{n}^{-1/8}\Big(  \mu_{n}^{\frac{\varepsilon r-\delta+1}{16}}\mu
_{n}\left\Vert w_{1}^{n}\right\Vert _{L^{2}\left(  \Omega_{1}\right)
}\Big)^{1/8}\\
&  \leq C\mu_{n}^{\frac{3}{4}-\frac{\varepsilon r-\delta+1}{16}\cdot\frac
{1}{8}}\left(  \mu_{n}^{\frac{\varepsilon r-\delta+1}{16}}\mu_{n}\left\Vert
w_{1}^{n}\right\Vert _{L^{2}\left(  \Omega_{1}\right)  }\right)
^{1/8}\text{.}%
\end{align*}
Then%
\begin{equation}
\left\Vert \mathscr{B}_{2}w_{1}^{n}\right\Vert _{L^{2}\left(  I\right)  }%
^{2}\leq C\mu_{n}^{\frac{3}{2}-\frac{\varepsilon r-\delta+1}{64}}\left(
\mu_{n}^{\frac{\varepsilon r-\delta+1}{16}}\mu_{n}\left\Vert w_{1}%
^{n}\right\Vert _{L^{2}\left(  \Omega_{1}\right)  }\right)  ^{1/4}%
\rightarrow0\text{,} \label{ast31}%
\end{equation}
due to (\ref{ast21}), whenever \color{black} $\frac{3}{2}-\frac{\varepsilon
r-\delta+1}{64}\leq0$ and $\varepsilon r-\delta+1\leq\frac{r}{2}$.
\color{black} Note that%
\begin{align*}
\frac{3}{2}-\frac{\varepsilon r-\delta+1}{64}  &  \leq0\text{ and }\varepsilon
r-\delta+1\leq\frac{r}{2}\\
&  \Longleftrightarrow96\leq\varepsilon r-\delta+1\leq\frac{r}{2}\\
&  \Longleftrightarrow190+2\delta\leq2\varepsilon r\leq r-2\left(
1-\delta\right)  \text{.}%
\end{align*}
To decrease the value of $r$ in the last inequality, $\delta$ should be zero
and $2\varepsilon$ should take its largest value such that $2\varepsilon r\leq
r-2$, this is, when $\delta=0$ and $2\varepsilon=1-\tfrac{2}{r}$. Therefore
$190\leq r-2$, i.e, \color{black} $r\geq192$. \color{black} In consequence
(\ref{ast31}) holds for \color{black} $r\geq192$. \color{black}

Thus, from the dissipation (\ref{estimate dissip}), from \eqref{ast21d}, \eqref{ast21c}, \eqref{ast27}, \eqref{ast28}, and \eqref{ast31}, we obtain%
\begin{equation}
\lambda_{n}^{2}\left\Vert w_{3}^{n}\right\Vert _{L^{2}\left(  \Omega
_{2}\right)  }^{2}\rightarrow0\text{.}\label{ast34}%
\end{equation}

\smallskip
\noindent\textbf{(viii)} \textit{Estimate of $\| w_{4}^{n}\|_{L^{2}(  \Omega_{2})}$:} From the equivalence of the norms in $\HH$ and $\mathscr{X}$, and \eqref{ec resolvent2}, it holds
\begin{equation*}
      i\lambda_{n} w_{3}^{n}- w_{4}^{n} = \mu_{n}^{-r}\widetilde{f}_{3}^{n} \to 0 \quad \text{in}\quad L^2(\Omega_2).
\end{equation*}
From this and \eqref{ast34} we obtain
\begin{equation*}
    \left\Vert w_{4}^{n}\right\Vert _{L^{2}\left(  \Omega
_{2}\right)  }\rightarrow0\text{.}\label{ast34b}%
\end{equation*}

\smallskip
\noindent\textbf{(ix)} \textit{End of the proof:}
Now we will show that
$$\beta_{1}\left\Vert w_{1}^{n}\right\Vert _{H_{\Gamma
}^{2}\left(  \Omega_{2}\right)  }^{2}+\beta_{2}\left\Vert \nabla w_{3}%
^{n}\right\Vert _{L^{2}\left(  \Omega_{2}\right)  }^{2}\rightarrow0,$$ when
$r\geq192$. For this, it is enough to prove that the following terms in
\eqref{ast10}: $(  w_{5}^{n},\partial_{\nu}w_{1}^{n})_{L^{2}\left(  I\right)  }$, $\mu_{n}^{-r}(  \widetilde{f}_{2}^{n},w_{1}^{n})_{L^{2}\left(  \Omega_{1}\right)  }$, $\lambda_{n}\mu
_{n}^{-r}(  \widetilde{f}_{1}^{n},w_{1}^{n})_{L^{2}\left(
\Omega_{1}\right)  }$, $(  \nabla w_{5}^{n},\nabla w_{1}^{n})_{L^{2}\left(  \Omega_{1}\right)  ^{2}}$, $\mu_{n}^{-r}(  \widetilde{f}_{4}^{n},w_{3}^{n})_{L^{2}\left(  \Omega_{2}\right)  }$, and
$\lambda_{n}\mu_{n}^{-r}(  \widetilde{f}_{3}^{n},w_{3}^{n})_{L^{2}\left(  \Omega_{2}\right)  }$, go to zero when $n\rightarrow\infty$, but
that follows from $\left\Vert w^{n}\right\Vert _{\mathscr{H}}=1$, $\Vert
\widetilde{f}^{n}\Vert _{\mathscr{H}}\to0$, and the dissipation.
In fact:%

\begin{align*}
\big\vert \left(  w_{5}^{n},\partial_{\nu}w_{1}^{n}\right)_{L^{2}\left(
I\right)  }\big\vert & \leq \left\Vert w_{5}^{n}\right\Vert _{L^{2}\left(
I\right)  }\left\Vert \partial_{\nu}w_{1}^{n}\right\Vert _{L^{2}\left(
I\right)  }\\
& \leq C\left\Vert w_{5}^{n}\right\Vert _{L^{2}\left(  I\right)
}\underbrace{\left\Vert w_{1}^{n}\right\Vert _{H^{2}\left(  \Omega_{1}\right)
}}_{\leq C}\rightarrow0\text{,}%
\end{align*}
$$
\big\vert \mu_{n}^{-r}(  \widetilde{f}_{2}^{n},w_{1}^{n})
_{L^{2}\left(  \Omega_{1}\right)  }\big\vert \leq C\mu_{n}^{-r}\Vert
\widetilde{f}_{2}^{n}\Vert _{L^{2}\left(  \Omega_{1}\right)
}\underbrace{\left\Vert w_{1}^{n}\right\Vert _{H^{2}\left(  \Omega_{1}\right)
}}_{\leq C}\rightarrow0\text{,}%
$$%
$$
\big\vert \lambda_{n}\mu_{n}^{-r}(  \widetilde{f}_{1}^{n},w_{1}^{n})_{L^{2}\left(  \Omega_{1}\right)  }\big\vert \leq C\mu
_{n}^{1-r}\Vert \widetilde{f}_{1}^{n}\Vert _{L^{2}\left(
\Omega_{1}\right)  }\underbrace{\left\Vert w_{1}^{n}\right\Vert _{H^{2}\left(
\Omega_{1}\right)  }}_{\leq C}\rightarrow0\text{,}%
$$%
$$
\big\vert (  \nabla w_{5}^{n},\nabla w_{1}^{n})_{L^{2}\left(
\Omega_{1}\right)^{2}}\big\vert \leq C\left\Vert \nabla w_{5}^{n}\right\Vert _{L^{2}\left(  \Omega_{1}\right)  ^{2}}\underbrace{\left\Vert
w_{1}^{n}\right\Vert _{H^{2}\left(  \Omega_{1}\right)  }}_{\leq c}%
\rightarrow0\text{,}%
$$%
$$
\big\vert \mu_{n}^{-r}(  \widetilde{f}_{4}^{n},w_{3}^{n})
_{L^{2}\left(  \Omega_{2}\right)  }\big\vert \leq\mu_{n}^{-r}\Vert
\widetilde{f}_{4}^{n}\Vert _{L^{2}\left(  \Omega_{2}\right)
}\underbrace{\left\Vert w_{3}^{n}\right\Vert _{L^{2}\left(  \Omega_{1}\right)
}}_{\leq c}\rightarrow0\text{,}%
$$%
$$
\big\vert \lambda_{n}\mu_{n}^{-r}(  \widetilde{f}_{3}^{n},w_{3}^{n})  _{L^{2}\left(  \Omega_{2}\right)  }\big\vert \leq C\mu
_{n}^{1-r}\Vert \widetilde{f}^{n}\Vert _{\mathscr{H}}%
\underbrace{\left\Vert w^{n}\right\Vert _{\mathscr{H}}}_{=1}\rightarrow
0\text{.}%
$$
Therefore%
\begin{equation}
\beta_{1}\left\Vert w_{1}^{n}\right\Vert _{H_{\Gamma}^{2}\left(  \Omega
_{2}\right)  }^{2}+\beta_{2}\left\Vert \nabla w_{3}^{n}\right\Vert
_{L^{2}\left(  \Omega_{2}\right)  }^{2}\rightarrow0\text{,} \label{ast35}%
\end{equation}
also due to  \eqref{ast10}, \eqref{ast21} and \eqref{ast34}. In consequence,
$\left\Vert w^{n}\right\Vert _{\mathscr{H}}\rightarrow0$, which is a
contradiction with $\left\Vert w^{n}\right\Vert _{\mathscr{H}}=1$.

To complete the proof we still have to show that $i\mathbb{R}\subset \rho(\AA)$. This is done in Lemma \ref{Prop_resolvent}.

\begin{remark}
With this proof of Theorem \ref{Th poly without GC}, the decay rate is 
at least $1/192$ regardless of the presence or absence of the geometrical condition
\eqref{GC} on $I$. We expect this rate to be not optimal. However, it seems to be hard to obtain optimal polynomial rates except in special situations, where one can determine the eigenvalues of the operator $\mathscr A$ explicitly.
\end{remark}

\begin{lemma}\label{Prop_resolvent}
Let $m= 0$. Then the imaginary axis is contained in the resolvent set of $\AA$, i.e., $i\R\subset\rh(\AA)$.
\end{lemma}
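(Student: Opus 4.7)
The plan is to prove the lemma in two steps: first establish that the spectrum of $\mathscr{A}$ on $i\R$ can only consist of eigenvalues, then rule out nonzero imaginary eigenvalues by combining the dissipation identity with a unique continuation argument for the membrane.

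For the first step, I would use that $0\in\rho(\mathscr{A})$ by Lemma~\ref{Prop_IR}, together with Corollary~\ref{remarkregularity} which gives $D(\mathscr{A})\hookrightarrow H^4(\Omega_1)\times H^2(\Omega_1)\times H^2(\Omega_2)\times H^1(\Omega_2)\times H^2(\Omega_1)$. Since $\Omega_1,\Omega_2$ are bounded with smooth boundary, each component embedding is compact into the corresponding factor of $\mathscr{H}$, so $\mathscr{A}^{-1}\in\mathscr{L}(\mathscr{H})$ is compact. Hence $\sigma(\mathscr{A})$ consists of isolated eigenvalues of finite algebraic multiplicity, and it suffices to show that $i\lambda\notin\sigma_p(\mathscr{A})$ for every $\lambda\in\R\smallsetminus\{0\}$.

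For the second step, assume $w=(w_1,\dots,w_5)^\top\in D(\mathscr{A})\smallsetminus\{0\}$ satisfies $\mathscr{A}w=i\lambda w$ with $\lambda\in\R\smallsetminus\{0\}$. Taking real parts and applying Lemma~\ref{A is dissipative} with $m=0$ yields
\begin{equation*}
0=\Re(\mathscr{A}w,w)_{\mathscr{H}}=-\sigma\|w_5\|^2_{L^2(\Omega_1)}-\beta\|\nabla w_5\|^2_{L^2(\Omega_1)^2}-\beta\kappa\|w_5\|^2_{L^2(\partial\Omega_1)}.
\end{equation*}
Since $\beta,\kappa>0$, $\nabla w_5\equiv 0$ and $w_5|_{\partial\Omega_1}=0$, so $w_5\equiv 0$ in $\Omega_1$ (here using that $\Omega_1$ is connected). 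Reading off the fifth row of $\mathscr{A}w=i\lambda w$ then gives $\alpha\Delta w_2=0$, hence $\Delta w_2\equiv 0$. Using $w_2=i\lambda w_1$ from the first row this becomes $\Delta w_1\equiv 0$, and consequently $\Delta^2 w_1\equiv 0$. Plugging this and $w_5=0$ into the second row forces $i\lambda w_2=0$, so $w_2\equiv 0$ and $w_1\equiv 0$.

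It remains to show $w_3=w_4=0$. The transmission conditions in the strong sense (Theorem~\ref{Th regularity}) give $w_3|_I=w_1|_I=0$, and \eqref{ec13} with $m=0$, $w_1=0$, $w_5=0$ reduces to $\beta_2\partial_\nu w_3=0$ on $I$. Thus $w_3$ has vanishing Cauchy data on the whole boundary $I=\partial\Omega_2$ and solves the Helmholtz equation $-\beta_2\Delta w_3=\rho_2\lambda^2 w_3$ in $\Omega_2$. Extending $w_3$ by zero across $I$ yields an $H^2_{\mathrm{loc}}$ solution of the same elliptic equation on a neighborhood of $\overline{\Omega_2}$, so by the unique continuation principle $w_3\equiv 0$, and then $w_4=i\lambda w_3=0$. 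This contradicts $w\neq 0$. The only delicate point is verifying that the boundary regularity from Theorem~\ref{Th regularity} is enough to justify the strong sense traces and the zero-extension unique continuation, but both are standard for $H^2(\Omega_2)$ solutions of a second-order elliptic equation.
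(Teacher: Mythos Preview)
Your proof is correct and takes a genuinely different route from the paper's. The paper argues by contradiction at the level of resolvent norms: assuming $i\R\not\subset\rho(\mathscr{A})$, it produces a sequence $(\lambda_n)\subset\R$ with $\lambda_n\to\lambda^*\in\R$ and $w_n\in D(\mathscr{A})$, $\|w_n\|_{\mathscr{H}}=1$, such that $(i\lambda_n I-\mathscr{A})w_n\to 0$, and then invokes verbatim the long chain of estimates from the proof of Theorem~\ref{Th poly without GC} to conclude $\|w_n\|_{\mathscr{H}}\to 0$. In other words, the paper's proof is not self-contained but piggybacks on the entire polynomial-stability machinery.

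Your approach is more direct and conceptually cleaner: you first observe that $\mathscr{A}^{-1}$ is compact (via Corollary~\ref{remarkregularity} and Rellich), reducing the question to ruling out eigenvalues on $i\R\setminus\{0\}$, and then dispatch any such eigenfunction by the dissipation identity followed by unique continuation for the Helmholtz equation on $\Omega_2$. This avoids all the interpolation and trace estimates of Section~\ref{Polynomial stability}. The paper's route has the advantage of reusing work already done, but yours is the standard and more transparent argument, and it makes explicit the role of unique continuation (which is hidden inside the paper's sequential estimates). One very small remark: connectedness of $\Omega_1$ is not strictly needed for $w_5\equiv 0$, since $\nabla w_5=0$ and $w_5|_{\partial\Omega_1}=0$ force $w_5$ to vanish on every connected component regardless.
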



\begin{proof}
 As $0\in\rh(\AA)$ (see Lemma \ref{Prop_IR}) and  $\rh(\AA)$ is open in $\C$, then there is $R>0$ such that $B(0, R)\subset\rh(\AA)$. For real $\de$ such that $0<\delta< R$, we have $i[-\de, \de]\subset\rh(\AA)$.
Thus, $\RR\coloneqq \{\la>0 : i[-\la, \la]\subset\rh(\AA)\}$ is not empty. Let $\la^*\coloneqq \sup\RR$. If $\la^*=\infty$, we have nothing to prove. Let us suppose $\la^*<\infty$. If $\la^*\in\RR$, then $i[-\la^*, \la^*]$ is contained in $\rh(\AA)$ and so it is possible to find $M>0$ such that $i[-\la^*-M, \la^*+M]\subset\rh(\AA)$. The above indicates that $\la^*+M\in\RR$ which contradicts the assumption that $\la^*$ is the supremum of $\RR$. Hence, $\la^*\not\in\RR$. Then, there exists $\left(\la_n\right)_{n\in\N}\subset\RR$ such that $\lim_{n\rightarrow\infty}\la_n=\la^*$ and $\lim_{n\rightarrow\infty}\left\|(i\la_n\Ii-\AA)^{-1}\right\|_{\mathscr{L}(\HH)}=\infty$. Hence, there exists $(\tilde{f}_n)_{n\in\N}\subset\HH$ with
$$
\|\tilde{f}_n\|_{\HH}=1 \quad\textup{and}\quad \lim_{n\rightarrow\infty}\|(i\la_n\Ii-\AA)^{-1}\tilde{f}_n\|_{\HH}=\infty.
$$
Here putting $\tilde{w}_n\coloneqq (i\la_n\Ii-\AA)^{-1}\tilde{f}_n$, $w_n\coloneqq \tilde{w}_n/\|\tilde{w}_n\|_{\HH}$ and $f_n\coloneqq \tilde{f}_n/\|\tilde{w}_n\|_{\HH}$, we get that
\begin{equation*}\label{E1}
\left(i\la_n\Ii-\AA\right)w_n=f_n
\end{equation*}
with
\begin{equation*}\label{E2}
\left\|w_n\right\|_{\HH}=1
\end{equation*}
and
\begin{equation*}\label{E3}
\left\|(i\la_n\Ii-\AA)w_n\right\|_{\HH}\xrightarrow[n\rightarrow\infty]{}0.
\end{equation*}
In the same way as in  the proof above, we obtain  $\left\|w_n\right\|_{\HH}\to 0$.
\end{proof}


%

\begin{appendix}
\section{Auxiliary results}\label{Annexes}

In this appendix, we collect some results that are useful for the proofs of the main results.

\begin{lemma}[{\cite[Theorem 1.4.4]{liu1999semigroups}}]\label{boundary}
Let $U$ be a bounded domain in $\R^n$ with $C^1$ boundary $\pa U$. For any function $u\in H^1(U)$, the following estimate holds:
\begin{equation}\label{EQU3}
\Vert u\Vert_{L^2(\partial U)} \leq C \Vert u\Vert_{H^1(U)}^{1/2}\Vert u\Vert_{L^2(U)}^{1/2}
\end{equation}
with $C$ being a positive constant independent of $u$.
\end{lemma}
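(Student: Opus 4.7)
The plan is to prove this standard trace-interpolation inequality by localization to the half-space and an application of the fundamental theorem of calculus together with Cauchy--Schwarz, then density.

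First, I would reduce to the case of smooth functions: by the density of $C^\infty(\overline{U})$ in $H^1(U)$ (which uses the $C^1$ regularity of $\partial U$), it suffices to establish the bound for $u \in C^\infty(\overline{U})$, since both sides are continuous with respect to the $H^1(U)$ norm. Second, I would flatten the boundary via a finite atlas: choose a finite open cover $\{V_j\}_{j=0}^N$ of $\overline{U}$ with $\overline{V_0} \subset U$ and, for $j \ge 1$, $C^1$-diffeomorphisms $\Phi_j: V_j \to B_1(0)$ straightening $\partial U \cap V_j$ onto $\{y_n = 0\}$. Choose a subordinate partition of unity $\{\chi_j\}$. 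Writing $u = \sum_j \chi_j u$, the term $\chi_0 u$ has zero trace on $\partial U$, so only the boundary pieces $u_j := \chi_j u$ for $j \ge 1$ contribute.

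For each such piece, after the change of variables, one reduces to estimating $\|v\|_{L^2(\mathbb{R}^{n-1})}$ in terms of $\|v\|_{L^2(\mathbb{R}^n_+)}$ and $\|v\|_{H^1(\mathbb{R}^n_+)}$ for $v \in C^\infty_c(\overline{\mathbb{R}^n_+})$. Here is the core computation: for $x' \in \mathbb{R}^{n-1}$,
\begin{equation*}
|v(x',0)|^2 = -\int_0^\infty \partial_n |v(x',x_n)|^2\, dx_n = -2\,\mathrm{Re}\int_0^\infty \bar v(x',x_n)\,\partial_n v(x',x_n)\, dx_n.
\end{equation*}
Integrating over $x' \in \mathbb{R}^{n-1}$ and applying Cauchy--Schwarz yields
\begin{equation*}
\|v\|_{L^2(\mathbb{R}^{n-1})}^2 \le 2 \|v\|_{L^2(\mathbb{R}^n_+)} \|\partial_n v\|_{L^2(\mathbb{R}^n_+)} \le 2 \|v\|_{L^2(\mathbb{R}^n_+)}\|v\|_{H^1(\mathbb{R}^n_+)}.
\end{equation*}
Taking square roots gives the desired interpolation bound on the half-space.

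Finally, I would transport the estimate back to each chart: the Jacobian of $\Phi_j$ and its inverse are bounded (uniformly, since the atlas is finite and $\partial U \in C^1$), so the $L^2$ and $H^1$ norms of $u_j$ on $V_j \cap U$ are comparable to those of $v_j := u_j \circ \Phi_j^{-1}$ on the half-ball, with constants depending only on $U$. Summing over $j$ and using $\|u_j\|_{H^1(U)} \le C\|u\|_{H^1(U)}$, $\|u_j\|_{L^2(U)} \le C\|u\|_{L^2(U)}$ yields \eqref{EQU3} with a constant depending only on $U$. The only genuinely delicate step is verifying that the localization preserves the interpolation structure (the product $\|\cdot\|_{L^2}^{1/2}\|\cdot\|_{H^1}^{1/2}$ rather than pure $H^1$), which follows by applying Cauchy--Schwarz once more when recombining the pieces via $\sum_j a_j b_j \le (\sum_j a_j^2)^{1/2}(\sum_j b_j^2)^{1/2}$ on the chart-wise estimates.
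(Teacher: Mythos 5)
The paper gives no proof of this lemma; it is quoted verbatim from \cite[Theorem 1.4.4]{liu1999semigroups}, so there is no in-paper argument to compare against. Your proof is the standard and correct one: density of $C^\infty(\overline U)$ in $H^1(U)$, localization and boundary flattening with a finite $C^1$ atlas, the half-space identity $|v(x',0)|^2=-2\Re\int_0^\infty \bar v\,\partial_n v\,dx_n$ followed by Cauchy--Schwarz, and transport back through the charts. The recombination step you flag is indeed harmless: with finitely many charts one may either apply Cauchy--Schwarz to the sums as you do, or simply bound each $\|\chi_j u\|_{L^2(U)}\leq C\|u\|_{L^2(U)}$ and $\|\chi_j u\|_{H^1(U)}\leq C\|u\|_{H^1(U)}$ termwise, so the interpolation structure of the right-hand side survives.
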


\begin{corollary}\label{coro-na}
For any function $u\in H^2(U)$,  the following estimate holds:
\begin{equation}\label{Ineq_02}
\left\|\pa_\nu u\right\|_{L^2(\pa U)}\leq\widetilde{C} \left\|u\right\|_{H^2(U)}^{1/2}\left\|\na u\right\|_{L^2(U)^n}^{1/2}
\end{equation}
with $\widetilde{C}>0$ independent of $u$. Estimate \eqref{Ineq_02} is also true if we change $\nu$ by $\tau$.
\end{corollary}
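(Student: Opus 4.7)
The plan is to reduce the corollary to Lemma~\ref{boundary} applied componentwise to $\nabla u$. Since $u\in H^2(U)$, each partial derivative $\partial_j u$ lies in $H^1(U)$, so Lemma~\ref{boundary} is directly applicable with $\partial_j u$ in place of $u$. This gives
\[
\|\partial_j u\|_{L^2(\partial U)} \le C\,\|\partial_j u\|_{H^1(U)}^{1/2}\|\partial_j u\|_{L^2(U)}^{1/2}
\qquad (j=1,\dots,n).
\]
The obvious monotonicity bounds $\|\partial_j u\|_{H^1(U)}\le \|u\|_{H^2(U)}$ and $\|\partial_j u\|_{L^2(U)}\le \|\nabla u\|_{L^2(U)^n}$ upgrade this to
\[
\|\partial_j u\|_{L^2(\partial U)} \le C\,\|u\|_{H^2(U)}^{1/2}\|\nabla u\|_{L^2(U)^n}^{1/2}.
\]

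Next, I would write $\partial_\nu u = \nu\cdot\nabla u = \sum_{j=1}^n \nu_j\,\partial_j u$ on $\partial U$. Because $|\nu_j|\le 1$ pointwise on $\partial U$, the triangle inequality yields
\[
\|\partial_\nu u\|_{L^2(\partial U)} \le \sum_{j=1}^n \|\partial_j u\|_{L^2(\partial U)} \le n C\,\|u\|_{H^2(U)}^{1/2}\|\nabla u\|_{L^2(U)^n}^{1/2},
\]
which is the desired estimate with $\widetilde C := nC$, independent of $u$. For the tangential derivative, the same argument applies verbatim with $\tau$ replacing $\nu$, using only that $|\tau_j|\le 1$ componentwise (which holds since $\tau$ is a unit vector field on $\partial U$).

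There is essentially no obstacle here: the corollary is a direct consequence of Lemma~\ref{boundary} combined with the componentwise decomposition of $\partial_\nu u$ (resp.\ $\partial_\tau u$) and elementary norm monotonicity. The only small point worth stating carefully is that the constant $C$ in Lemma~\ref{boundary} depends only on $U$ and not on the function to which it is applied, so the same $C$ works simultaneously for all $\partial_j u$, which is what makes the summation legitimate.
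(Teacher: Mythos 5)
Your proof is correct and follows exactly the paper's route: the paper's own proof is the one-line remark that the estimate follows from applying Lemma~\ref{boundary} to the first partial derivatives of $u$, which is precisely the componentwise argument you spell out. The added details (monotonicity of the norms and the bound $|\nu_j|\le 1$ to recombine $\partial_\nu u=\sum_j\nu_j\partial_j u$) are the intended, and correct, way to fill in that one-liner.
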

\begin{proof}
An application of Lemma \ref{boundary} to the first partial derivatives of $u$ leads to \eqref{Ineq_02}.
\end{proof}

\begin{lemma}\label{lemmaAN}
Let $a, b$ and $c$ be positive real constants with $ab > c$ and let
\[
p(\lambda) \ceqq \lambda^3 + a\lambda^2 + b\lambda + c,\quad \lambda \in \mathbb{C}.
\]
Then,
\begin{itemize}
    \item[$i)$] $p(\lambda)$ has a negative real root.
    \item[$ii)$] $p(\lambda)$ has no pure imaginary roots, that is, $p(iy) \neq 0$ for all $y \in \mathbb{R}$.
    \item[$iii)$] If $\gamma = x + i y$ is a root of $p(\lambda)$ with $x, y \in \mathbb{R}\smallsetminus\{0\}$, then $x$ is a solution of the polynomial equation
    \begin{equation} \label{real poly ec}
     8x^3 + 8a x^2 + 2(a^2 + b)x + ab - c = 0.
    \end{equation}
    Therefore, $x < 0$.
   \item[$iv)$] $p(\lambda) \neq 0$ for all $\lambda \in \overline{\mathbb{C}_+}$.
\end{itemize}
\end{lemma}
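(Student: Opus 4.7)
The plan is to treat the four assertions in order, using only elementary facts about cubic polynomials with real coefficients and a short application of Vieta's formulas; the hypothesis $ab>c$ (the Routh--Hurwitz condition for a real cubic) will be exactly what drives the sign arguments in parts (iii) and (iv).

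For part (i), I would simply invoke the intermediate value theorem: since $p(0)=c>0$ and $p(\lambda)\to-\infty$ as $\lambda\to-\infty$, $p$ has at least one negative real root. For part (ii), I would evaluate $p(iy)=(c-ay^{2})+iy(b-y^{2})$ for $y\in\R$ and separate real and imaginary parts; vanishing of the imaginary part forces $y=0$ or $y^{2}=b$, and the first alternative gives the impossible $c=0$ while the second gives $c=ay^{2}=ab$, contradicting $ab>c$.

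For part (iii), since $p$ has real coefficients, $\bar\gamma=x-iy$ is also a root and the third root $\delta$ is real. Vieta's formulas read
\begin{equation*}
2x+\delta=-a,\qquad 2x\delta+(x^{2}+y^{2})=b,\qquad (x^{2}+y^{2})\delta=-c.
\end{equation*}
Solving the first for $\delta=-a-2x$ and substituting into the second gives $x^{2}+y^{2}=b+2ax+4x^{2}$; plugging both expressions into the third and expanding reproduces exactly \eqref{real poly ec}. To deduce $x<0$, set $q(x):=8x^{3}+8ax^{2}+2(a^{2}+b)x+ab-c$ and observe that for $x\geq0$ each of the first three summands is non-negative while $ab-c>0$ by hypothesis, so $q(x)>0$ on $[0,\infty)$; hence every real root of $q$ is strictly negative.

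Finally, for part (iv), I would combine what has been proved: pure imaginary roots are excluded by (ii); any non-real root $x\pm iy$ has $x<0$ by (iii); and $p$ has no non-negative real root because $p(0)=c>0$ and $p'(x)=3x^{2}+2ax+b>0$ for $x\geq0$, so $p$ is strictly positive on $[0,\infty)$. Altogether $p(\lambda)\ne0$ for every $\lambda\in\overline{\C_{+}}$. I do not anticipate any genuine obstacle; the only slightly delicate step is the Vieta computation in (iii), which is a short but careful polynomial identity, and verifying that the hypothesis $ab>c$ is used in precisely the right place to close both the non-negativity argument for $q$ and the positivity argument for $p$ on $[0,\infty)$.
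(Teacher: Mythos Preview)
Your proposal is correct. The only noteworthy difference from the paper is in part~(iii): the paper substitutes $\gamma=x+iy$ directly into $p$, separates into $\Re p(\gamma)=0$ and $\Im p(\gamma)=0$, solves the imaginary-part equation for $y^{2}$ (using $y\neq0$), and substitutes into the real-part equation to obtain \eqref{real poly ec}. You instead pass through Vieta's formulas for the triple of roots $\gamma,\bar\gamma,\delta$ and eliminate $\delta$ and $y^{2}$. Both routes are purely algebraic and of comparable length; yours has the minor advantage of making the role of the real third root explicit, while the paper's has the advantage of not needing to name that root at all. For parts (i), (ii), and (iv) the paper's proof is a one-line sketch, and your more detailed arguments (intermediate value theorem, explicit separation of $p(iy)$, and monotonicity of $p$ on $[0,\infty)$) simply fill in what the paper leaves implicit.
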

\begin{proof}
$i)$ is an immediate consequence of the positivity of the constants $a, b$ and $c$, and $ii)$ is a consequence of $c \neq ab$. Now, we prove $iii)$. Let $\gamma = x + i y$ be a root of $p(\lambda)$ with $x, y\in \R\smallsetminus \{0\}$. Solving $y^2$ from $\Im(p(\lambda)) = 0$ and substituting in $\Re(p(\lambda)) = 0$, we get (\ref{real poly ec}). $iv)$ is a consequence of $i)$-$iii)$.

\end{proof}

For example, if the Greek letters below represent positive constants, then $a\ceqq\dfrac{\beta}{\rho_0}$, $b\ceqq\dfrac{\alpha^2 +  \beta_1\rho_0}{\rho_0 \rho_1}$ and $c\ceqq\dfrac{\beta\beta_1}{\rho_0 \rho_1}$ satisfy the assumptions in the previous lemma, due to $\alpha^2 + \beta_1\rho_0  > \beta_1\rho_0 $.

\begin{lemma}[Aubin-Lions] \label{A.4} Let $X_0$, $X$, $X_1$ be Hilbert spaces with $X_0\overset{c}{\hookrightarrow} X \hookrightarrow X_1$, and $T>0$. Then it holds
$$ \big\{u\in L^2((0,T),X_0)\,:\, u_t\in L^2((0,T),X_1)\big\} \overset{c}{\hookrightarrow} L^2((0,T), X). $$
\end{lemma}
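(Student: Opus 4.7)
The plan is a two-step reduction. First, an Ehrling-type interpolation inequality will reduce the problem from compactness in $L^2((0,T);X)$ to compactness in $L^2((0,T);X_1)$. Second, a time-averaging argument combined with Arzelà--Ascoli will establish the latter. Throughout, let $W$ denote the space on the left-hand side of the embedding with its graph norm, and let $(u_n)_{n\in\mathbb{N}}$ be an arbitrary bounded sequence in $W$.

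First I would prove Ehrling's lemma: for every $\eta>0$ there exists $C_\eta>0$ with $\|v\|_X \le \eta\|v\|_{X_0} + C_\eta\|v\|_{X_1}$ for all $v\in X_0$. This follows from the standard contradiction argument, using the compactness of $X_0 \overset{c}{\hookrightarrow} X$ together with the injectivity of $X\hookrightarrow X_1$. Applied to $u_n(t)-u_m(t)$ and integrated in time, it yields
\[ \|u_n-u_m\|_{L^2((0,T);X)}^2 \le 2\eta^2 \|u_n-u_m\|_{L^2((0,T);X_0)}^2 + 2C_\eta^2 \|u_n-u_m\|_{L^2((0,T);X_1)}^2.\]
Since $(u_n)$ is bounded in $L^2((0,T);X_0)$, the first summand is of order $\eta^2$ uniformly in $n,m$; hence, via a diagonal subsequence argument over $\eta=1/k$, it suffices to extract a subsequence of $(u_n)$ that is Cauchy in $L^2((0,T);X_1)$.

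For the second step, for $h\in(0,T)$ define the moving time-average $(J_h u)(t) := \tfrac1h\int_t^{t+h} u(s)\,ds$. The identity $u(t+h)-u(t) = \int_t^{t+h} u'(s)\,ds$ in $X_1$, combined with Cauchy--Schwarz, gives the uniform approximation estimate $\|u_n - J_h u_n\|_{L^2((0,T-h);X_1)} \le C h^{1/2}$. For fixed $h>0$, the family $\{J_hu_n\}_n\subset C([0,T-h];X_1)$ is pointwise bounded in $X_0$, with $\|J_hu_n(t)\|_{X_0} \le h^{-1/2}\|u_n\|_{L^2((0,T);X_0)}$, hence pointwise relatively compact in $X_1$ by the compact embedding $X_0 \overset{c}{\hookrightarrow} X \hookrightarrow X_1$. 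A similar calculation using $J_hu_n(t)-J_hu_n(s) = \tfrac1h\int_s^t(u_n(r+h)-u_n(r))\,dr$ yields equicontinuity of $\{J_hu_n\}_n$ in $C([0,T-h];X_1)$ with modulus depending on $h$. Arzelà--Ascoli then produces a subsequence convergent in $C([0,T-h];X_1)$, and hence in $L^2((0,T-h);X_1)$.

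Combining the uniform approximation with a diagonal subsequence argument over $h=1/k$ (and handling the short tail interval $(T-h,T)$ by extending $u_n$ past $T$, e.g.\ by the constant $u_n(T)\in X_1$, to which the same approximation bound applies) produces a subsequence of $(u_n)$ that is Cauchy in $L^2((0,T);X_1)$. Combined with the Ehrling reduction of the first step, this yields a subsequence Cauchy in $L^2((0,T);X)$, completing the proof of the compact embedding. The main delicate point is the diagonal argument balancing the $h$-dependent Arzelà--Ascoli constants against the $O(h^{1/2})$ approximation error; apart from this bookkeeping, the proof rests entirely on standard compactness machinery.
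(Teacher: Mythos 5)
Your argument is correct, but it cannot be compared line-by-line with the paper's own proof, because the paper does not prove this lemma at all: it simply cites Proposition III.1.3 of Showalter's monograph. What you have written is a self-contained proof along the classical Lions--Simon lines: Ehrling's interpolation inequality reduces compactness in $L^2((0,T),X)$ to compactness in $L^2((0,T),X_1)$, and the latter is obtained from the Steklov time-averages $J_h$ together with the vector-valued Arzel\`a--Ascoli theorem. The main estimates are all right: the $O(h^{1/2})$ approximation bound coming from $u(t+h)-u(t)=\int_t^{t+h}u'(s)\,ds$ (which uses, implicitly but legitimately, that $W\hookrightarrow H^1((0,T),X_1)\hookrightarrow C([0,T],X_1)$), the pointwise bound $\|J_hu_n(t)\|_{X_0}\le h^{-1/2}\|u_n\|_{L^2((0,T),X_0)}$ giving pointwise relative compactness in $X_1$, and the $h$-dependent equicontinuity. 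One small simplification: once a subsequence is Cauchy in $L^2((0,T),X_1)$, the Ehrling inequality with arbitrary $\eta$ already makes that same subsequence Cauchy in $L^2((0,T),X)$, so no second diagonal over $\eta=1/k$ is needed.

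The only step that deserves one more sentence is the tail interval $(T-h,T)$. If you extend $u_n$ past $T$ by the constant $u_n(T)$, the approximation bound indeed survives (the extension has vanishing derivative beyond $T$), but the pointwise relative compactness of the averages at times $t$ near $T$ is no longer immediate: $u_n(T)$ is only known to lie in $X_1$, so the $X_0$-bound on $J_h\tilde u_n(t)$ is lost there. The cheapest repair avoids the extension altogether: since $\sup_n\|u_n\|_{C([0,T],X_1)}\le C$, one has $\|u_n-u_m\|_{L^2((T-h,T),X_1)}^2\le Ch$ for all $n,m$, so the tail contributes the same $O(h^{1/2})$ error as the averaging step and the diagonal over $h=1/k$ closes as you describe. (Alternatively, one can check that $\{u_n(T)\}_n$ is totally bounded in $X_1$, being within $Ch^{1/2}$ of the $X_0$-bounded set $\{\frac1h\int_{T-h}^Tu_n(s)\,ds\}_n$ for every $h$.) Apart from this bookkeeping, which you yourself flagged, the proof is complete and is exactly the standard argument that the cited reference encapsulates.
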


\begin{proof}
See \cite[Proposition III.1.3]{Showalter97}.
\end{proof}

\end{appendix}



\end{document}